\documentclass{amsart}
\usepackage{graphicx}

\usepackage{amsmath,amsfonts,amsthm,amssymb,graphics,color,datetime}
\usepackage{hyperref}

\newtheorem{theorem}{Theorem}
\newtheorem{prop}{Proposition}
\newtheorem{lemma}{Lemma}
\newtheorem{cor}{Corollary}

\newtheorem{remark}{Remark}
\newtheorem{defn}{Definition}

\numberwithin{equation}{section}

\newcommand{\pa}{\partial} 
        
        \definecolor{pink}{rgb}{1,0,1}
        \definecolor{purple}{rgb}{0.4,0.2,1}

\newcommand{\Ker}{\operatorname{Ker}}

\newcommand{\LL}{\mathbb{L}}
\newcommand{\cher}{\operatorname{Chern}}

\newcommand{\eps}{\varepsilon}

\newcommand{\cX}{\mathcal{X}}


\newcommand{\tr}{{\rm Tr}}


\newcommand{\cK}{{\mathcal{K}}}

\newcommand{\cT}{{\mathcal{T}}}

\newcommand{\N}{\mathbb{N}}
\newcommand{\R}{\mathbb{R}}

\newcommand{\C}{\mathbb{C}}
\newcommand{\Z}{\mathbb{Z}}

\newcommand{\cL}{\mathcal{L}}

\def\l{\left}

\def\({\left(}
\def\){\right)}
\def\[{\left[}
\def\]{\right]}
\def\la{\langle}
\def\ra{\rangle}

%
%
%
%
%
\newcommand{\prim}{\textnormal{Prim}(\Gamma)}
\newcommand{\HH}{\mathbb{H}}
\definecolor{lapislazuli}{rgb}{0.15, 0.38, 0.61}

\begin{document}

\title[Variations of zeta functions and curvature asymptotics]{Second variation of Selberg zeta functions and curvature asymptotics}

\author[K.\@ Fedosova]{Ksenia Fedosova}
\address{Albert-Ludwigs-Universit\"at Freiburg, Mathematisches Institut, Ernst-Zermelo-Str.~1, 79104 Freiburg im Breisgau, Germany}
\email{ksenia.fedosova@math.uni-freiburg.de}

\author[J. \@ Rowlett]{Julie Rowlett}
\address{Mathematical Sciences, Chalmers University of Technology, 412 96 Gothenburg, Sweden}
\email{julie.rowlett@chalmers.se}

\author[G.\@ Zhang]{Genkai Zhang}
\address{Mathematical Sciences, Chalmers University of Technology, 412 96 Gothenburg, Sweden}
\email{genkai@chalmers.se}

\subjclass[2010]{Primary: 11F72, Secondary:  30F60, 32G15, 30F30}

\keywords{Selberg zeta function; Selberg trace formula; Second variation; Plurisubharmonicity; Teichm\"uller theory; zeta-regularized determinant; higher Selberg zeta functions}

\begin{abstract} We give an explicit formula for the second variation of the logarithm of the Selberg zeta function, $Z(s)$, on Teichm\"uller space.  We then use this formula to determine the asymptotic behavior as $\Re s \to \infty$ of the second variation.
  As a consequence, for $m \in \N$, 
  we obtain the complete expansion in $m$ of the curvature of the
    vector bundle $H^0(X_t, \mathcal K_t)\to t\in \mathcal T$ of holomorphic m-differentials over
    the Teichm\"uller space $\mathcal T$, for $m$ large.  Moreover, we
    show that this curvature 
    agrees with the Quillen curvature up to a term of exponential decay, $O(m^2 e^{-l_0 m}),$ where $l_0$ is the length of the shortest closed hyperbolic geodesic.  

\end{abstract}

\maketitle

\section{Introduction}
Selberg was one of many mathematicians for whom investigating the Riemann hypothesis would lead to deep results of broad interest, not only in analytic number theory but also in many other neighboring fields.  To wit, Selberg's trace formula was one of the main inspirations of the Langlands program.  The Selberg zeta function is shrouded with a certain mystique because it is defined in terms of quantities which are in general incomputable, namely the set of lengths of closed geodesics on a Riemannian manifold, 
\begin{equation}\label{SelbergZeta}	Z(s) := \prod_{\gamma \in \prim} Z_\gamma (s), \quad Z_\gamma = \prod_{k=0}^\infty \(1 - e^{-l(\gamma) \cdot (s+k)}\).  \end{equation}
Here, the geometric setting is a compact Riemann surface, $X$, of genus $g\ge 2$, equipped with the hyperbolic metric of curvature $-1$. Let $\Gamma$ be the fundamental group of $X$.  We may then fix $X$ as the quotient of the upper half plane,
$\HH=\{z=x+iy, y>0\}$, by $\Gamma$,  so that $X=\Gamma\backslash \HH$.  We say a hyperbolic element, $\gamma \in \Gamma$, is primitive if for all $\gamma_0 \in \Gamma$ and $k \in \N$ with $\gamma = \gamma_0^k$ it follows that $\gamma_0 = \gamma$ and $k=1$.  Then $\prim $ in (\ref{SelbergZeta}) is the set of conjugacy classes of primitive hyperbolic elements $\gamma$ of $\Gamma$, which is in canonical bijection with the set of primitive closed geodesics, and $\ell(\gamma)$ is the geodesic length of the associated conjugacy class of $\gamma$.  

It is clear from \eqref{SelbergZeta} that the Selberg zeta function is intimately linked to the Riemannian geometry of $X$.  What is perhaps not so obvious is that it is also closely connected to the complex structure of $X$.  To describe this, we fix $S$, the so-called \em model surface \em of genus $g \geq 2$.  The Teichm\"uller space $\cT = \cT_g$ of surfaces of genus $g$ is the set of equivalence classes $[ (\Sigma, \varphi)]$, where $\Sigma$ is a Riemann surface, and $\varphi : S \to \Sigma$ is a diffeomorphism, known as a \em marking.  \em  On each such surface, $\Sigma$, there is a unique Riemannian metric which has constant curvature $-1$, however in this notation the Riemannian metric is suppressed.  The equivalence relation identifies 
$$(\Sigma_1, \varphi_1) \sim (\Sigma_2, \varphi_2)$$
if there is an isometry $I: \Sigma_1 \to \Sigma_2$ such that $I$ and $\varphi_2 \circ \varphi_1^{-1}$ are isotopic.  Hence, from the Riemannian geometric perspective, these two surfaces are identical, in that they are topologically the same, and they are equipped with the same Riemannian metric.

One may also consider $\cT$ from a complex analytic perspective.  For this purpose, we recall that a Beltrami differential, $\mu$, is a $\Gamma$ invariant $\pa_z \otimes d \bar z$ tensor on $\HH$, thus we write $\mu = \mu(z) \pa_z \otimes d \bar z$. 
 It is harmonic if $\mu(z)=\overline{\phi(z)} y^2$, and $\phi=\phi(z) dz^2$
 is a $\Gamma$ invariant holomorphic quadratic differential.  Ahlfors 
\cite{Ahlfors-1961} showed that tangent vectors in $\cT_t$ for a point $t = [(X, \varphi)] \in \cT$ are represented by harmonic Beltrami differentials.  To see this, for a harmonic Beltrami differential $\mu$, let $f^\mu$ be the solution of the Beltrami equation 
\begin{equation} \label{beltramieq}
\begin{split}  
f_{\bar z}  &=  \mu(z)f_z, \quad z \in \HH \\
f_{\bar{z}}  &=  \overline{\mu(\bar{z})} f_z, \quad z \in \LL \\
f(0)  &=  0, \quad f(1)=1, \quad f(\infty) = \infty,
\end{split}
\end{equation}  
where $\LL$ is the lower half plane in $\C$.  For a fixed Beltrami differential $\mu$ of (supremum) norm $1$, let $\eps$ be a small complex number.  Consider the Beltrami equation for $\eps \mu$ with solution $f^{\eps \mu}$.  Then, for sufficiently small $\eps$, $f^{\eps \mu}$ defines a Fuchsian group $\Gamma^\eps = f^{\eps \mu} \Gamma (f^{\eps \mu})^{-1}$.  The Riemann surfaces $X^\eps = \Gamma^\eps \backslash \HH$ define a curve in $\cT$ with $X^0 = X$.  Hence, we identify unit tangent vectors in $\cT_t$ with harmonic Beltrami differentials of unit norm.  Each of these in turn defines a local one parameter family of Riemann surfaces, $X^\eps$.  In this way we compute the variation of quantities defined on the surface $X$ corresponding to the point $t \in \cT$ in the directions corresponding to these harmonic Beltrami differentials of unit norm.  The stage is now set to present our main results.  

\subsection{Main results} 
Our first main result generalizes \cite[Theorem 1.1.2]{Gon} in which Gon computed a formula for the first variation of the log of the Selberg zeta function; this may be compared with our Proposition \ref{firstvarlogszf}.  The variation is computed, in both our setting and that of Gon, by viewing the Selberg zeta function as a function on Teichm\"uller space, $\cT$, defined for $t \in \cT$ as the Selberg zeta function on the corresponding Riemann surface equipped with the hyperbolic Riemannian metric of constant curvature $-1$.  

\begin{theorem}\label{second_variation_geod_sum}
For $\Re(s)>1$, we have   
\begin{equation} \bar\partial_\mu\partial_\mu \log Z(s)= \sum_{\gamma \in \prim} \bar\partial_\mu\partial_\mu \log \ell(\gamma) A_\gamma(s)+ \sum_{\gamma \in \prim} |\partial_\mu \log \ell(\gamma)|^2(A_\gamma(s) + B_\gamma(s) )
  \end{equation}
where
$$A_\gamma(s)=s \frac{d}{ds} \log Z_\gamma (s) +\frac{d}{ds} \log z_\gamma (s)^{-1}=\sum_{k=0}^\infty\frac{(s+k) \ell(\gamma)}{e^{(s+k)\ell(\gamma)}-1},$$
and
$$B_\gamma(s)= \left( s^2\frac{d^2}{ds^2}\log Z_\gamma(s)+2s\frac{d^2}{ds^2}\log  z_\gamma(s)^{-1}
+\frac{d^2}{ds^2}\log  \tilde z_\gamma(s)^{-1}\right) $$
$$= - l(\gamma)^2\sum_{k=0}^\infty\frac{(s+k)^2 e^{(s+k)\ell(\gamma)}}{(e^{\ell(\gamma)(s+k)}-1)^2}.$$ 
Above $z_\gamma(s)$ and $\tilde z_\gamma(s)$ are as in (\ref{localzeta}) and (\ref{localhighzeta}), respectively.
\end{theorem}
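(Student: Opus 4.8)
The plan is to differentiate the logarithm of the product formula \eqref{SelbergZeta} twice in the direction of a harmonic Beltrami differential $\mu$, exploiting the fact that the only quantities in $Z(s)$ that depend on the point $t \in \cT$ are the geodesic lengths $\ell(\gamma)$. Writing $\log Z(s) = \sum_{\gamma \in \prim} \log Z_\gamma(s)$ and regarding each $\log Z_\gamma(s)$ as a function of $\ell = \ell(\gamma)$ alone, the chain rule for the Wirtinger operators $\partial_\mu$ and $\bar\partial_\mu$ gives
\begin{equation*}
\bar\partial_\mu \partial_\mu \log Z_\gamma(s) = \frac{\partial \log Z_\gamma}{\partial \ell}\,\bar\partial_\mu\partial_\mu \ell + \frac{\partial^2 \log Z_\gamma}{\partial \ell^2}\,|\partial_\mu \ell|^2.
\end{equation*}
The first step is therefore to convert derivatives with respect to $\ell$ into derivatives with respect to $s$: since $Z_\gamma(s)$ depends on $s$ and $\ell$ only through the combination $(s+k)\ell$ in \eqref{SelbergZeta}, one has the scaling identities $\ell\,\partial_\ell \log Z_\gamma = (s\,\partial_s + \,\text{shift})\log Z_\gamma$, and similarly at second order. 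Making this precise is where the auxiliary local factors $z_\gamma(s)$ and $\tilde z_\gamma(s)$ of \eqref{localzeta} and \eqref{localhighzeta} enter: they are exactly the correction factors needed so that $\ell\,\partial_\ell = s\,\partial_s$ acting on the logarithm picks up $\frac{d}{ds}\log z_\gamma(s)^{-1}$, yielding the stated closed form for $A_\gamma(s)$, and iterating gives $B_\gamma(s)$ with the $\tilde z_\gamma$ correction.

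Next I would rewrite $\bar\partial_\mu\partial_\mu \ell$ and $|\partial_\mu \ell|^2$ in terms of $\log \ell$: since $\partial_\mu \log \ell = \partial_\mu \ell / \ell$ and $\bar\partial_\mu\partial_\mu \log \ell = \bar\partial_\mu\partial_\mu \ell/\ell - |\partial_\mu \ell|^2/\ell^2$, substituting back and collecting terms converts the $(\partial_\ell, \partial_\ell^2)$ expression into the $(\partial_\mu\log\ell, |\partial_\mu\log\ell|^2)$ form displayed in the theorem; the coefficient of $|\partial_\mu\log\ell|^2$ is precisely $\ell\,\partial_\ell\log Z_\gamma + \ell^2\partial_\ell^2 \log Z_\gamma$, which after the scaling identities is $A_\gamma(s) + B_\gamma(s)$, while the coefficient of $\bar\partial_\mu\partial_\mu\log\ell$ is $\ell\,\partial_\ell \log Z_\gamma = A_\gamma(s)$. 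Finally I would verify the explicit sums for $A_\gamma$ and $B_\gamma$ by direct term-by-term differentiation of $\log Z_\gamma(s) = \sum_{k\geq 0}\log(1 - e^{-\ell(s+k)})$, obtaining $A_\gamma(s) = \sum_k (s+k)\ell/(e^{(s+k)\ell}-1)$ and $B_\gamma(s) = -\ell^2\sum_k (s+k)^2 e^{(s+k)\ell}/(e^{\ell(s+k)}-1)^2$ after a short computation.

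The main obstacle is justifying the termwise interchange of the differential operators $\partial_\mu$, $\bar\partial_\mu$ with the infinite product/sum over $\gamma \in \prim$ and over $k$: one must show the relevant series of derivatives converge locally uniformly in $t \in \cT$ for $\Re(s) > 1$. This requires controlling the growth of $\partial_\mu\log\ell(\gamma)$ and $\bar\partial_\mu\partial_\mu\log\ell(\gamma)$ as $\ell(\gamma) \to \infty$ (these are bounded uniformly, e.g.\ by Gardiner's formula expressing the variation of geodesic length as a period integral of $\mu$ against the quadratic differential, which gives $|\partial_\mu \ell(\gamma)| \le C\,\ell(\gamma)$) together with the exponential decay $A_\gamma(s), B_\gamma(s) = O(\ell(\gamma)^2 e^{-\Re(s)\ell(\gamma)})$ coming from the $k=0$ term, against which the polynomial growth in the length is absorbed and the prime geodesic counting function $\#\{\gamma : \ell(\gamma) \le L\} = O(e^L)$ still leaves a convergent series once $\Re(s) > 1$. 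Once the interchange is licensed, the remaining computations are the routine algebra of the chain rule and the scaling identities sketched above.
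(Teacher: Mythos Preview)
Your proposal is correct and follows essentially the same route as the paper. The paper first establishes the first variation $\partial_\mu \log Z(s) = \sum_\gamma \partial_\mu\log\ell(\gamma)\,A_\gamma(s)$ (Proposition~\ref{firstvarlogszf}) and then applies $\bar\partial_\mu$ via Leibniz, computing $\bar\partial_\mu A_\gamma(s)$ by differentiating the explicit sum; your packaging of this as a single second-order chain rule in the real variable $\ell$, followed by the substitution $\bar\partial_\mu\partial_\mu\ell = \ell\,\bar\partial_\mu\partial_\mu\log\ell + \ell\,|\partial_\mu\log\ell|^2$, is exactly the same algebra, and your identifications $\ell\,\partial_\ell\log Z_\gamma = A_\gamma(s)$ and $\ell^2\partial_\ell^2\log Z_\gamma = B_\gamma(s)$ match the paper's formulas. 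For the convergence step the paper invokes precisely the ingredients you name: the bounds $|\partial_\mu\ell(\gamma)|,\,|\bar\partial_\mu\partial_\mu\ell(\gamma)| \le C\ell(\gamma)$ (Corollary~\ref{cor2}, due to Axelsson--Schumacher) together with the convergence Lemma~\ref{convergence_lemma}.
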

To avoid cumbersome notation, we have not included the explicit formulas for the first and second variations of the lengths of closed geodesics in the second variational formula above.  These are contained in \S \ref{s:vargeo}, Propositions \ref{lengthvarprop1} and \ref{lengthvarprop2}, respectively.  

In our next main result, we prove asymptotics of the second variation of $\log Z(s)$ for $\Re(s) \to \infty$.  To state the result, we require the set of systole geodesics, 
\begin{equation} \label{systoles} S(X) := \{ \gamma \in \prim : l(\gamma) = l_0 \},  \quad l_0 = \inf \{ l(\gamma) : \gamma \in \prim \}. \end{equation} 
Correspondingly, listing all the systole geodesics as
 $S(X)=\{l(\gamma_1), \cdots, l(\gamma_N)\}$, we define 
\begin{equation}
\label{part-l_0}
\partial l_0: \mu\in T_t^{(1, 0)}(\cT)\mapsto
(\partial_{\mu} l(\gamma_1), \cdots, \partial l_{\mu}(\gamma_N)) \in
\mathbb C^N. 
\end{equation}
and 
\begin{equation} \label{abusenot} |\pa_\mu l_0|^2 := \sum_{S(X)} |\pa_\mu l(\gamma)|^2, \end{equation}
and 
\begin{equation} \label{abusenot2} 
\bar \partial_\mu \partial_\mu \log l_0 := \sum_{S(X)} \bar \partial_\mu \partial_\mu \log l(\gamma).
\end{equation}

\begin{theorem}\label{thmasy} 
If $|\pa_\mu l_0|^2 \neq 0$, then 
\begin{equation} \label{asy1}  \lim_{\Re s \to \infty} \frac{\bar\partial_\mu\partial_\mu \log Z(s)} {  s^2 e^{-s l_0}}=-\frac{|\pa_\mu  l_0|^2}{1-e^{-l_0} } < 0. \end{equation} 
For the Ruelle zeta function, $R(s)$, which is defined in \eqref{ruellezeta}, 
$$\lim_{\Re s \to \infty} \frac{\bar\partial_\mu\partial_\mu \log R(s)} {  s^2 e^{-s l_0}}=-\frac{|\pa_\mu  l_0|^2}{1-e^{-l_0} } < 0.$$
The square of the Hilbert-Schmidt norm of the resolvent, which is defined in Lemma \ref {le:hsnorm}, 
satisfies 
$$\lim_{\Re s \to \infty} \frac{\bar \pa_\mu \pa_\mu ||(\Delta_0 + s(s-1))^{-1}||_{HS} ^2}{e^{-s l_0}} = \frac{ |\pa_\mu l_0|^2 l_0^2 }{4(1-e^{-l_0})} > 0.$$

In case $|\pa_\mu l_0|^2 = 0$, then we have 
\begin{equation} \label{asy1} 
\lim_{\Re s \to \infty} \frac{ \bar\partial_\mu\partial_\mu \log Z(s) }{s  e^{-s l_0}} = 
\frac{l_0 }{1-e^{-l_0}}\bar \pa_\mu \pa_\mu \log l_0 >0. \end{equation} 
In this case, we also have for the Ruelle zeta function, 
$$
\lim_{\Re s \to \infty} \frac{ \bar\partial_\mu\partial_\mu \log R(s) }{s  e^{-s l_0}} = 
\frac{l_0 }{1-e^{-l_0}}\bar \pa_\mu \pa_\mu \log l_0 >0.
$$
The square of the Hilbert-Schmidt norm of the resolvent in this case satisfies 
$$\lim_{\Re s \to \infty} \frac{\bar \pa_\mu \pa_\mu ||(\Delta_0 + s(s-1))^{-1}||_{HS} ^2}{ \frac 1 s e^{-s l_0}} = -  \frac{ l_0^2 \bar \pa_\mu \pa_\mu \log l_0}{4 (1-e^{- l_0})} < 0.$$

The zeta regularized determinant, $\det (\Delta_0 + s(s-1))$ satisfies 
$$\bar \pa_\mu \pa_\mu \log \det(\Delta_0 + s(s-1)) = \bar\partial_\mu\partial_\mu \log Z(s),$$
and therefore analogous results hold for its behavior as $\Re s \to \infty$. 
\end{theorem}

It follows from the above theorem
  that the  Hessian
  $  \bar \partial \partial \log Z(s)$
  for large $s\in \mathbb R$ is
  not positive definite on Teichm\"uller space, $\mathcal T$.
  More precisely for each fixed $t\in \mathcal T$ let $r=r(t)$ be the
  rank of the linear map (\ref{part-l_0}).
  It follows from the result of Wolpert
  \cite[Theorem 8]{Wolpert-JDG} that the differential
  $dl$ of the  length
  function  of one geodesic $l$ is everywhere non-vanishing.
  In particular the $\mathbb R^N$-valued differential $\partial l_0$
  defined in (\ref{part-l_0})
 is non-vanishing, 
  and $1\le r\le 3g-3$. It is also known (see \cite[Theorem 3]{APP},
  \cite{SchmutzSchaller-JDG})
  that there exists
  $t\in \mathcal T$ such that the number $N$ of systole geodesics
is bounded above by $2g$, $N\le 2g$, thus the rank $r\le 2g$ at $t$.
Our result states then that if $\mu\in T_t(\mathcal T)$ is
in  the subspace $\text{Ker} \partial l_0$ then
the Hessian $\bar   \partial_{\mu}\partial_{ \mu} \log Z(s)$
has positive sign for large $s \in \R$, and
if $\mu$ is
in the orthogonal complement $(\text{Ker} \partial l_0)^{\perp}$ then
Hessian has negative sign for large $s \in \R$, the dimension of the two spaces
being $r$ and $3g-3-r$, respectively. 

Assume now that $m \in \N$.  As a consequence of Theorem
\ref{thmasy}, we prove that the curvature, $\cher^{(m)}(\mu, \mu)$, of the vector bundle $H^0(X_t, \mathcal K_t)\to t\in \mathcal T$
  of holomorphic m-differentials over the Teichm\"uller space, $\cT$, agrees with the Quillen curvature up to a term of exponential decay.  In particular we obtain the full expansion of the $\cher^{(m)}(\mu, \mu)$ in $m$.  

\begin{cor} \label{cor2}
The  curvature, $\cher^{(m)}(\mu, \mu)$, of the vector bundle
$H^0(X_t, \mathcal K_t)\to t\in \mathcal T$
 over the Teichm\"uller space, $\cT$, has the following expansion, 
\begin{equation} \label{eq:thm-3-2}
\cher^{(m)}(\mu, \mu)=\frac{6m(m-1)+1}{12\pi}||\mu||_{WP} ^2 + R(m), \quad m \to \infty.  
\end{equation}
Here, $\Vert \mu \Vert_{WP} ^2$ is the square of the Petersson norm of $\mu$.  The remainder 
$$R(m) = O(m^2 e^{-m l_0}), \quad m \to \infty.$$
\end{cor}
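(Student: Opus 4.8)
The plan is to split $Ric^{(m)}(\mu,\mu)$ into a piece computed \emph{exactly} by the local index theorem of Takhtajan--Zograf and a piece which is the second variation of the analytic torsion of the $\bar\partial$-operator on $\mathcal K^m$, and then to recognize the latter as $\bar\partial_\mu\partial_\mu\log Z$ evaluated at an argument linear in $m$, so that Theorem \ref{thmasy} controls it.

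Fix $m\ge 2$, so that $H^1(X_t,\mathcal K^m_t)=0$ for every $t$ by Serre duality; then the Hodge bundle $H^0(\mathcal K^m)\to\cT$ has rank $(2m-1)(g-1)$ and coincides with the direct image $R\pi_\ast\mathcal K^m$, and the determinant line bundle is $\lambda_m=\det H^0(\mathcal K^m)$. I would equip $\lambda_m$ with its $L^2$ (Petersson) metric and with the Quillen metric, related to the $L^2$ metric by the analytic torsion $\det{}'\Delta_m$ of the two-term Dolbeault complex, where $\Delta_m=\bar\partial_m^\ast\bar\partial_m$ acts on sections of $\mathcal K^m$ with kernel $H^0(\mathcal K^m)$ and $\det{}'$ is the zeta-regularized determinant over the positive spectrum. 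Applying $\bar\partial\partial\log$ to the comparison $\log\|\cdot\|_Q^2=\log\|\cdot\|_{L^2}^2+\log\det{}'\Delta_m$ and pairing with $\mu$ then yields
\begin{equation*}
Ric^{(m)}(\mu,\mu)=c_Q^{(m)}(\mu,\mu)+\bar\partial_\mu\partial_\mu\log\det{}'\Delta_m,
\end{equation*}
where $c_Q^{(m)}$ is the curvature of the Quillen metric on $\lambda_m$ (this identity holding up to sign and the fixed universal constants relating first Chern forms, Ricci forms, the Hermitian form $Ric^{(m)}(\mu,\mu)$ and $\|\mu\|_{WP}^2$, which I would pin down once at the outset).

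Next I would invoke the Takhtajan--Zograf local index theorem for the family $\bar\partial_m$ over $\cT$, which gives $c_1(\lambda_m,\|\cdot\|_Q)=\frac{6m^2-6m+1}{12\pi^2}\,\omega_{WP}$; in the normalization used here this reads exactly $\frac{6m(m-1)+1}{12\pi}\|\mu\|_{WP}^2$ as a Hermitian form, and, being an identity of forms, it contributes no error. For the torsion term I would use the classical identifications of Sarnak and D'Hoker--Phong of regularized determinants of Laplacians on $m$-differentials with the Selberg zeta function: $\det{}'\Delta_m=c(g,m)\,Z(s_m)$ for an argument $s_m$ growing linearly in $m$ (such as $s_m=m$ or $s_m=m+1$) and a prefactor $c(g,m)$ depending only on $g$ --- through the hyperbolic area $4\pi(g-1)$, fixed by Gauss--Bonnet --- and on $m$; in particular $c(g,m)$ is constant on $\cT$, so $\bar\partial_\mu\partial_\mu\log\det{}'\Delta_m=\bar\partial_\mu\partial_\mu\log Z(s_m)$. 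Combining the three steps, $Ric^{(m)}(\mu,\mu)=\frac{6m(m-1)+1}{12\pi}\|\mu\|_{WP}^2+\bar\partial_\mu\partial_\mu\log Z(s_m)$, and Theorem \ref{thmasy} applied at the real point $s=s_m\to\infty$ gives $\bar\partial_\mu\partial_\mu\log Z(s_m)=O(s_m^2 e^{-s_m l_0})=O(m^2 e^{-m l_0})$, which is the asserted $R(m)$.

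The main obstacle I expect is bookkeeping rather than conceptual: tracking the normalization constants --- those relating $c_1$, the Ricci form, the Hermitian form $Ric^{(m)}(\mu,\mu)$ and $\|\mu\|_{WP}^2$, the power of $\det{}'\Delta_m$ entering the Quillen metric, and the precise argument $s_m$ and prefactor in the Sarnak / D'Hoker--Phong determinant formula --- so that the leading term emerges as exactly $\frac{6m(m-1)+1}{12\pi}\|\mu\|_{WP}^2$ with the correct sign on the torsion correction. The bound on $R(m)$ is by contrast robust: any linear-in-$m$ choice of $s_m$ produces the same $O(m^2 e^{-m l_0})$, since $e^{-(m+c)l_0}\le e^{-m l_0}$ for $c\ge 0$ and the coefficient in Theorem \ref{thmasy} is polynomial in $s$.
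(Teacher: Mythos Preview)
Your proposal is correct and follows essentially the same route as the paper. The paper simply cites the needed identity directly as Proposition~\ref{TZ-form} (Takhtajan--Zograf, \cite{ZT1987}), namely
\[
\bar\partial_\mu\partial_\mu \log Z(m) = -Ric^{(m)}(\mu,\mu) + \frac{6m(m-1)+1}{12\pi}\,\Vert\mu\Vert_{WP}^2 \qquad (m\ge 2),
\]
and then applies Theorem~\ref{thmasy} at $s=m$. What you are doing is unpacking this formula into its constituents (Quillen vs.\ $L^2$ metric on $\lambda_m$, the local index theorem for the Quillen curvature, and the Sarnak/D'Hoker--Phong identification of $\det{}'\Delta_m$ with $Z$), so all of your ``bookkeeping'' worries --- the sign on the torsion term, the precise argument $s_m$, and the normalization of $\omega_{WP}$ --- are already packaged in the cited result: one has $s_m=m$ exactly, and the sign is $-\bar\partial_\mu\partial_\mu\log Z(m)$ rather than $+$. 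Since the remainder estimate only uses $|\bar\partial_\mu\partial_\mu\log Z(m)|=O(m^2 e^{-m l_0})$, none of this affects the conclusion.
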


The corollary shows that  $\cher^{(m)}(\mu, \mu)$ and the Quillen curvature agree up to an exponentially small remainder term.  This improves, in the case of Riemann surfaces, the result of Ma-Zhang \cite{Ma-Zh} where the first two terms were found.  In this sense, our result can be seen as a variational version of the result of Bismut-Vasserot \cite{bismut1989} on the asymptotics of analytic torsion.  It is also closely related to the curvature of the Quillen metric on Teichm\"u{}ller space which has been studied in the general context of holomorphic families of K\"a{}hler manifolds \cite{bisbost}.
	
\subsection{Related works}  
To the best of our knowledge, the first result on plurisubharmonicity of naturally defined functions on Teichm\"uller space appeared in \cite{wolpert1987}.  There, Wolpert showed that geodesic length functions are plurisubharmonic as functions on Teichm\"uller space. 
He has built upon and generalized those results in \cite{wolpert2006, wolpert2008}.  More recently, Axelsson \& Schumacher \cite{Ax-Sch-2012}, using K\"ahler geometric methods, established the plurisubharmonicity of each geodesic length function.  They obtained this result as a corollary to formulas they demonstrated for the first and the second variations of the geodesic length as a function on Teichm\"uller space.   The Weil-Petersson Hession of length was also studied by Wolf \cite{Wolf-JDG-2012} at the same time as Axelsson \& Schumacher obtained their results.

Closely related to our work is that of Gon \cite{Gon}.  That context is more general because the underlying model surface in the definition of Teichm\"uller space is of type $(g,n)$, that is genus $g$ and punctured at $n$ points.  Our work is only for $(g,0)$, so that our surfaces are not punctured.  The main result of \cite{Gon} expresses the variation $\partial_\mu \log Z_\Gamma (s)$ as a sum over conjugacy classes of primitive hyperbolic elements of certain quantities depending on local higher zeta functions and periods of the automorphic forms over the closed geodesic.  The formula was obtained with the help of Takhtajan and Zograf's integral expression for the first variation of the Selberg zeta formula, 
\begin{equation}\label{first_var_integral}
\partial_\mu \log Z_\Gamma (s) = \int_X F_s(z) \mu.  
\end{equation}

Above, $F_s$ is a certain Poincar\'e series constructed from a second derivative of the resolvent kernel $Q_s(z,z')$ of the Laplacian on the surface. More precisely, 
$$
F_s = \sum_{\gamma \neq e, \gamma \in \Gamma} \left. \frac{\partial^2}{\partial z \partial z'} Q_s(z, \gamma z') \right|_{z = z'}, 
$$
where $e$ denotes the identity element.  Using the explicit formulas for $Q_s(z, \gamma z')$, Gon was able to express $\partial_\mu \log Z_\Gamma(s)$ via the sum of local Selberg zeta functions \cite[Theorem 1.1.2]{Gon}. 

Our method is different. Instead of exploiting the integral formula of Zograf and Takhtajan, we show that it is possible to differentiate the definition of the Selberg zeta function, directly, as long as it converges.  This serves our purposes well, because we are interested in arguments for which the product (\ref{SelbergZeta}) converges.  Next, we use the result of Axelsson and Schumacher \cite{Ax-Sch-2012} for the variation of the length of a geodesic.  Interestingly, although our method is different, we arrive at the same formula as Gon.  Moreover, our approach gives a geometric interpretation for the automorphic forms appearing in Gon's formula:  the automorphic forms correspond to the variations of the lengths of the geodesics.  Furthermore, with our method we are able to calculate the second variation of the Selberg zeta function as a sum over primitive closed geodesics of the surface.

Consequently, this method is applicable not only for studying the  Selberg zeta function, but also to study all other functions that are defined in an analogous way.  In particular, our techniques apply equally well to functions which are defined as a sum or product, over primitive closed geodesics, of quantities depending on the lengths of closed geodesics.  To illustrate the utility of our method, we compute in \S \ref{randomvars} variational results for:  the trace of the squared resolvent, the Ruelle zeta function, the zeta-regularized determinant of the Laplacian, and the hierarchy of higher Selberg zeta functions.  For the definition of these higher zeta functions, see (\ref{hierSelbergZeta}).  These form a real-parameter family $Z(s, t)$ of zeta functions which generalize the notion of the Selberg zeta function.  Interestingly, the local version of these higher Selberg zeta functions also appear in Gon's formula.  Here, we prove directly using their definitions a variational formula for the whole hierarchy which relates the variation of $Z(s,t)$ to that of $Z(s,t')$ for $t'<t$.  

We would also like to mention related results obtained by Fay \cite{Fay}, who considered Selberg zeta functions twisted by a representation of $\Gamma$.  It may be possible to generalize our results to that setting as well, if the key elements in the proofs are amenable to suitable adaptations.

\subsection{Key elements in the proofs}
Initially, we prove the first and second variational formulas, Proposition \ref{firstvarlogszf} and Theorem \ref{second_variation_geod_sum}, respectively, by differentiating the definition of the logarithm of the Selberg zeta function and verifying convergence.  As we proceed directly using the sum over closed geodesics, we must compute the first and second variation of the length of each closed geodesic.  To compute the first variation, although this would follow from \cite{Ax-Sch-2012}, we compute in a more classical and direct way using Gardiner's formula, obtaining an equivalent but superficially different formula.  However, unlike the first variational formula of \cite{Ax-Sch-2012}, one can read-off the terms in Gon's formula directly from our Proposition \ref{lengthvarprop1}.  

The asymptotics of the second variation of $\log Z(s)$ are obtained by locating the dominant term in our formula as $\Re s \to \infty$. When $s=m\ge 0$ is an integer, the variation is the difference of the curvature of the vector bundle $H^0(\Sigma_t, \mathcal K_t^m)\to t\in \mathcal T
$
over the Teichm\"uller space $\mathcal T$
and the Quillen curvature. The Quillen curvature is well-known and is given by a second degree polynomial in $m$.  We therefore obtain the full expansion of the curvature $\cher^{(m)}(\mu, \mu)$.  Apart from the case of abelian varieties this seems the first case  where a full expansion of $\cher^{(m)}(\mu, \mu)$ has been obtained.


\subsection{Further developments} 
In a subsequent paper \cite{FRWZ} we shall demonstrate an integral formula for the second variation which holds for $\Re(s)>1$, in the spirit of the integral formula of Takhtajan \& Zograf, \cite[Theorem 2]{ZT1987} for $s = m \in \N$.   We shall use this formula to define the  curvature,  $\cher^{(m)}(\mu, \mu)$, for non-integer $m$.  The Teichm\"uller space
andthe vector bundle
  $H^0(\mathcal K^m_t)
  \mapsto t\in \mathcal T$
  of
  holomorphic m-differentials   $H^0(\mathcal K^m_t)$
  over the Teichm\"uller space 
can be formulated in the general setup
of relative ample line bundles for fibrations of K\"a{}hler manifolds \cite{Bob}.  In a recent preprint \cite{Wan-Zhang} the third auhor together with Wan has  been able to prove a generalization of Corollary \ref{cor2} in this general setup. 

\subsection{Organization} 
In the next section we gather definitions and notations and demonstrate the requisite preliminaries.  In \S 3 we prove the first variational formula as well as estimates and a convergence result which will be used to justify convergence of the second variational formulas demonstrated in \S 4. The asymptotics of the second variation for  $\Re s \to \infty$ are computed in \S 5, which are then used to compute the asymptotics of the curvature of a Hermitian holomorphic vector bundle.  We conclude with an investigation of the special cases $s=m \in\{1, 2\}$.  Finally, in the appendix we provide a calculation of the Hilbert-Schmidt norm of the squared resolvent.  Although the formula is known, our particular method of calculation is not contained in the literature to the best of our knowledge and therefore may be interesting or useful.  This calculation is used to compute the variation of the Hilbert-Schmidt norm in \S 4.  

\subsection{Acknowledgements} We are very grateful to Bo Berndtsson
for several inspiring discussions and to Steve Zelditch for clarifying
some concepts in  Teichm\"u{}ller theory.  We thank also Werner
M\"u{}ller  for drawing our attention to the reference \cite{Fay}.  We
appreciate stimulating discussions with Dennis Eriksson and  a careful
reading of the paper by  Magnus Goffeng and  Xueyan Wan.  The second
author is supported by the Swedish Research Council Grant 2018-03873, the third author by the Swedish
Research  Council Grant 2018-03402.  The second author gratefully acknowledges the support of the National Science Foundation Grant DMS-1440140 as well as a room with a view at the Mathematical Sciences Research Institute in Berkeley, California during the fall 2019 semester.  All authors are grateful to the comments of the anonymous reviewers which have resulted in significant improvements to the quality of the paper.   

\section{Preliminaries}
We  fix notations and prepare the technical tools required for our proofs. 

\subsection{Hodge and $\bar\partial$ Laplacians on $L^2_{m, l}(X)$.}
We briefly recall a few known results for certain Laplace operators on Riemann surfaces which shall be important ingredients in the proofs of our results. Each point in Teichm\"uller space $ \cT = \cT_g $ is canonically identified with a compact Riemann surface, $X$ of genus $g \geq 2$, which admits a unique Riemannian metric of constant curvature $-1$.  Then, $X$ is identified with the quotient $\Gamma \backslash \HH$, where $\Gamma$ is the fundamental group of $X$, and $\HH$ is the upper half plane in $\C$.  The hyperbolic metric in Euclidean coordinates on the upper half plane is given by 
\begin{equation} \label{hypmetric} \rho(z)|dz|^2, \quad \rho(z)=y^{-2}, \quad  \HH = \{ z = x+iy \in \C : \Im z = y > 0\}. \end{equation} 
Let $\Delta_0=-y^2(\partial_x^2 +\partial_y^2)$
be the Laplace operator on scalar functions.  We note that 
$$\Delta_0 = -4 y^2 \frac{\pa^2}{\pa z \pa \bar z}.$$ 
For this reason, there are different normalizations of the Laplacian by different authors.  Particularly relevant to our work is the definition in \cite{TZ-cmp}, who defined the Laplacian as 
$$-y^2 \frac{\pa^2}{\pa z \pa \bar z} = \frac{1}{4} \Delta_0.$$

Let $\mathcal K$ be the holomorphic cotangent bundle and
$\bar {\mathcal K}$ the anti-holomorphic cotangent bundle over~$X$.  We also use the standard notation that $\mathcal K^{-k} = (\mathcal K^k)^*$ is the dual bundle of $\mathcal K^k$.  The scalar product on sections  of $\mathcal K^k \otimes \bar{\mathcal K}^l$  is given by
\begin{equation} \label{integration}  \langle f,g\rangle_{k,l} := \int_X  \langle f, g \rangle_z \rho(z) (\frac i2) dz\wedge d\bar z =  \int_X f \bar{g} \rho^{1-k-l} dA.  \end{equation} 
The integration above is with respect to the Euclidean measure, $dA = dx dy$, and is taken over a fundamental domain of $X$.  In the case of functions, we note that $k=l=0$, and we may simply write the integral of a function $\varphi$ on $X$ as $\int_X \varphi$, suppressing the $\rho dx dy$. 

Let  $L^2_{k, l}(X)$  be the corresponding Hilbert space. We write
$L^2_{k}(X) =L^2_{k, 0}(X)$. Denote by $\nabla'_k$  the $(1, 0)$ part
of the Chern connection, $\nabla_k=\nabla'_k+
\nabla ''$,  $\nabla''=\bar \partial$.
Let $\square = \square'
=(\nabla')^\ast\nabla' 
+\nabla' (\nabla')^\ast$
 and $\square''={\bar\partial}^\ast\bar\partial$
be the corresponding Laplace operators.
The Chern connection 
$\nabla=\nabla' +\bar\partial$ can be extended
to sections of $\mathcal K^m 
\otimes \bar{\mathcal K}$ 
as $(0, 1)$-forms with values in 
$\mathcal K^m $. 
Note that $\square''
={\bar\partial}^\ast
{\bar\partial} +
{\bar\partial}
{\bar\partial}^\ast= 
{\bar\partial}
{\bar\partial}^\ast
$ on $(0, 1)$-forms.

Let $H^0 (\mathcal K^k) = \Ker \bar \partial$
be the space of holomorphic $k$-forms, $k\ge 0$,
and $H^{(0, 1)}(\mathcal K^{k}) 
$ the $\bar\partial$-cohomology
of $(k, 1)$-forms, $k\le 1$. Elements in
$H^{(0, 1)}(\mathcal K^{-1}) $ are represented by the 
harmonic Beltrami differentials $\mu =\mu(z) (dz)^{-1} d\bar z$, and
are identified with 
$H^0 (\mathcal K^2)$ via the duality, 
\begin{equation} \label{phi-mu} 
\phi(z) := \rho(z) \overline{\mu(z)}
=y^{-2}\overline{\mu(z)}, \quad \phi := \phi(z) (dz)^2 \in H^0 (X_t, \cK^2). \end{equation} 

The following
Lemma
is a consequence of the general well-known
Kodaira-Nakano type formulas \cite[Chapter VII, Section 1]{Dem}.
For completeness we provide an elementary proof.

\begin{lemma} \label{lemma1} 
On the space $L^2 _{k, 1} (X)$  of $(0, 1)$
forms with coefficients in  $\mathcal K^k$
the operators $(\nabla')^*\nabla'$,
$\nabla' (\nabla')^*$, and $\square''={\bar\partial} {\bar\partial}^\ast$ are related by
$$
(\nabla')^*\nabla' 
=
\nabla' (\nabla')^* +\frac{k-1}2 ={\bar\partial}
{\bar\partial}^\ast.
$$
\end{lemma}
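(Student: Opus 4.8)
The plan is to compute the three operators explicitly in local coordinates on the upper half plane, using the fact that the hyperbolic metric $\rho(z)|dz|^2 = y^{-2}|dz|^2$ has curvature $-1$, and then compare. First I would recall the formula for the Chern connection of the Hermitian line bundle $\mathcal K^k$ with the metric induced by $\rho$: the $(1,0)$ part acts on a local section $f$ (thought of as a function, after trivializing by $(dz)^k$) by $\nabla'_k f = \rho^k \partial_z(\rho^{-k} f)\, dz = (\partial_z f - k (\partial_z \log \rho) f)\, dz$, while $\bar\partial f = \partial_{\bar z} f\, d\bar z$. Since $\partial_z \log \rho = \partial_z \log y^{-2} = -1/(z - \bar z) \cdot(\text{const})$, more precisely $\partial_z \log \rho = i/(z-\bar z)\cdot 2 = -1/y \cdot \tfrac{i}{2}\cdot 2$; I would just carry the symbol $\partial_z\log\rho$ and only substitute $\partial_{\bar z}\partial_z \log \rho = \tfrac14 \rho$ (which is exactly the statement that the curvature is $-1$, i.e. $-\partial_z\partial_{\bar z}\log\rho = -\tfrac14\rho$, matching the normalization $\Delta_0 = -4y^2\partial_z\partial_{\bar z}$ and curvature $-1$).

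The key step is then a bracket computation. On $(0,1)$-forms with values in $\mathcal K^k$, I would write everything in terms of the component functions and compute the formal adjoints $(\nabla')^\ast$ and $\bar\partial^\ast$ with respect to the inner product \eqref{integration}. The identity $(\nabla')^\ast\nabla' - \nabla'(\nabla')^\ast = [(\nabla')^\ast,\nabla']$ on this bundle is, up to sign conventions, the commutator whose value is governed by the curvature of $\mathcal K^k\otimes\bar{\mathcal K}$; the Kodaira–Nakano/Bochner–Kodaira identity says this commutator equals multiplication by (a constant times) the curvature form contracted against the Kähler form, which here is the scalar $\tfrac{k-1}{2}$. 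Concretely, I would verify $[(\nabla')^\ast, \nabla'] = \tfrac{k-1}{2}$ by a direct local computation: expand both compositions acting on a test section, integrate by parts to identify the adjoints, and collect the zeroth-order term, which reduces to a multiple of $\rho^{-1}\partial_z\partial_{\bar z}\log\rho = \tfrac14$ times the weight factor. The weight $k$ from $\mathcal K^k$ and the weight $-1$ from $\bar{\mathcal K}$ (the $(0,1)$-form part) combine to give $k - 1$, hence $\tfrac{k-1}{2}$ after accounting for the factor from the metric normalization.

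For the second equality, $\nabla'(\nabla')^\ast = \bar\partial\bar\partial^\ast = \square''$ on $(0,1)$-forms, I would argue as follows: since $\bar\partial$ of a $(0,1)$-form with values in $\mathcal K^k$ vanishes for dimension reasons (there are no $(0,2)$-forms on a Riemann surface), $\square'' = \bar\partial^\ast\bar\partial + \bar\partial\bar\partial^\ast = \bar\partial\bar\partial^\ast$, which is already noted in the excerpt. It then remains to show $\nabla'(\nabla')^\ast = \bar\partial\bar\partial^\ast$ on $L^2_{k,1}(X)$. This follows by the same type of local computation, or more slickly by noting that on a $(0,1)$-form $\alpha = g\, d\bar z \otimes (\text{trivialization})$, both $(\nabla')^\ast\alpha$ and $\bar\partial^\ast\alpha$ are sections of $\mathcal K^k$ and I would compute that they differ by a first-order operator whose composition with $\nabla'$ resp. $\bar\partial$ produces the same second-order operator plus matching lower-order terms; alternatively, subtract the first displayed identity from the Bochner–Kodaira identity for $\square''$ versus $\square'$. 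The cleanest route is: establish $(\nabla')^\ast\nabla' = \square'' + \tfrac{k-1}{2}$ and separately the standard fact $\square' = (\nabla')^\ast\nabla' + \nabla'(\nabla')^\ast$ with $(\nabla')^\ast\nabla'$ vanishing... — no, I would instead just directly compute $\nabla'(\nabla')^\ast$ and $\bar\partial\bar\partial^\ast$ in coordinates and observe they coincide.

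The main obstacle I anticipate is bookkeeping the constants: tracking the factor of $\tfrac14$ from the normalization $\Delta_0 = -4y^2\partial_z\partial_{\bar z}$, the factors of $\rho$ and $i/2$ in the inner product \eqref{integration}, and the weight contributions of $\mathcal K^k$ and of the $(0,1)$-form slot, so that everything assembles to exactly $\tfrac{k-1}{2}$ rather than some other rational multiple. I would manage this by doing the computation once for $k=0$ (scalar-valued $(0,1)$-forms, where the answer should be $-\tfrac12$) and for $k=1$ (where the commutator should vanish, reflecting that $\mathcal K\otimes\bar{\mathcal K}$ with the hyperbolic metric is, up to the $(0,1)$-form twist, the trivial flat situation), which pins down both the linear dependence on $k$ and the overall constant, and then present the general-$k$ calculation with confidence in the normalization.
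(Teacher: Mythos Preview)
Your overall plan --- direct local computation in the upper-half-plane model --- is exactly what the paper does and would succeed, but you have misread the second equality in the statement. The chain $(\nabla')^*\nabla' = \nabla'(\nabla')^* + \tfrac{k-1}{2} = \bar\partial\bar\partial^*$ asserts that $(\nabla')^*\nabla'$ equals $\bar\partial\bar\partial^*$; the constant $\tfrac{k-1}{2}$ remains attached to $\nabla'(\nabla')^*$. Your proposed step of ``computing $\nabla'(\nabla')^*$ and $\bar\partial\bar\partial^*$ in coordinates and observing they coincide'' will therefore fail for every $k\ne 1$, and your earlier aborted route ``establish $(\nabla')^*\nabla' = \square'' + \tfrac{k-1}{2}$'' has the shift on the wrong side for the same reason. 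The paper's computation makes this transparent: writing the adjoints explicitly as $(\nabla')^*(f\,(dz)^{k+1}d\bar z) = -\bar\partial(y^2 f)\,(dz)^k d\bar z$ and $\bar\partial^*(f\,(dz)^k d\bar z) = -y^{2-2k}\partial(y^{2k}f)\,(dz)^k$, one finds that the compositions $(\nabla')^*\nabla'$ and $\bar\partial\bar\partial^*$ applied to $f(z)(dz)^k d\bar z$ produce the \emph{identical} expression $g(z)(dz)^k d\bar z$, while $\nabla'(\nabla')^*$ yields $h(z)(dz)^k d\bar z$ with $g = h + \tfrac{k-1}{2}f$.

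A smaller bookkeeping point you flagged yourself: with $\rho=y^{-2}$ one has $\partial_z\partial_{\bar z}\log\rho = \tfrac12\rho$, not $\tfrac14\rho$ (this is what gives curvature $-1$ under $K = -\tfrac{2}{\rho}\partial_z\partial_{\bar z}\log\rho$). Your sanity checks at $k=0,1$ would eventually catch this, but only after you correct which pair of operators you are trying to match.
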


\begin{proof} 
The Chern connection $\nabla'$ (also called Maass operator) is
given  by $
\nabla'
\left( f(z) (dz)^k d\bar z\right)
= y^{-2k}\partial (f(z) y^{2k}) (dz)^{k+1} 
d\bar z$ and, 
$\bar\partial \left( f(z) (dz)^k\right)
=\bar\partial f(z) (dz)^k d\bar z$.
Thus their adjoints are
$$
(\nabla')^\ast \left( f(z) (dz)^{k+1} d\bar z\right)
=- \bar\partial( y^2f(z)) (dz)^{k} d\bar z
$$
and 
$$
(\bar \partial)^\ast \left( f(z) (dz)^{k} d\bar z\right)
=- y^{2-2k}\partial( y^{2k}f(z)) (dz)^{k}.
$$
We have thus 
$$
(\nabla')^\ast\nabla'
\left( f(z) (dz)^{k} d\bar z\right)
=\bar\partial{\bar\partial}^\ast\left( f(z) (dz)^{k} d\bar z\right)
=g(z)(dz)^{k} d\bar z, \quad
g(z)=-\bar\partial \left(y^{2-2k}\partial( y^{2k} f(z)) \right),
$$
and
$$
\nabla' (\nabla')^\ast
\left( f(z) (dz)^{k} d\bar z\right)
=h(z)(dz)^{k} d\bar z, \quad h(z)=-y^{-2(k-1)}
\partial
\left(
y^{2(k-1)} \bar\partial (y^2 f(z))\right).
$$
The operators 
 $f=f(z) (dz)^k d\bar z\mapsto g=(\nabla')^\ast\nabla' f$, and $f \mapsto h=\nabla' 
(\nabla')^\ast f$ then differ by a constant.  More precisely
$$
g(z)= -y^2\bar\partial\partial f(z)
-i y\partial f(z) +i k y\bar \partial f(z)
-\frac k 2 f(z),
$$
$$ 
h(z)=-y^2\bar\partial\partial f(z)
-i y\partial f(z) +i k y\bar \partial f(z) -\frac{2k-1} 2 f(z)
$$
and $g(z)= h(z) + \frac {k-1} 2f(z)$.
This completes the proof.
\end{proof}

\subsection{The Weil-Petersson Metric on Teichm\"uller space}  
We introduce the Weil-Petersson metric following \cite{tromba}, and we take the opportunity to recall as so nicely done there the origins of this metric.  Petersson introduced an inner product on the spaces of modular forms of arbitrary weight in the context of number theory.  Observing that modular forms of weight two are precisely holomorphic quadratic differentials, Andr\'e Weil remarked in a letter to Lars Ahlfors that Petersson's inner product should give rise to a Riemannian metric on Teichm\"uller space.  This is indeed the case, and Ahlfors went on to prove \cite{ahlfors-curv} that the holomorphic sectional curvature and the Ricci curvature of $\cT$ with respect to this metric, known as the Weil-Petersson metric, are both negative.  However, this metric is not complete, which was demonstrated by Wolpert \cite{wolp-incom}.  

At each point $t \in \cT$, we may identify the holomorphic tangent
vectors at $t$ with harmonic Beltrami differentials, $\mu$. The 
Weil-Petterson metric is defined by
$$\langle \mu, \mu \rangle =   \langle \mu, \mu \rangle_{WP} := 
\int_X |\mu|^2 =\int_X |\mu(z)|^2 \rho(z) dA(z).
$$
This defines a Hermitian metric on $\cT$, which taking the real part defines a Riemannian metric on $\cT$, but we shall only be interested in the Hermitian Weil-Petersson metric on $\cT$.  


\subsection{The curvature of vector bundles on Teichm\"uller space}  \label{sect-curvature}
For each point $t \in \cT = \cT_g$, the Teichm\"uller space of marked surfaces of genus $g$, we denote the  corresponding Riemann surface as $X_t$.  When it is clear from context, we may simply write $X$.   The  holomorphic tangent vectors at each $t\in \mathcal T$, $\mu \in T^{(1,0)} _t ( \cT)$, are identified with harmonic Beltrami differentials $\mu \in H^{(0, 1)}(X_t, \mathcal K^{-1})$.  We may also identify $\cK^{-1}$ with $\overline \cK$, with the observation that $(\pa_z)^* (\pa_z) = 1 = dz \otimes \pa_z$.  Thus we write in terms of a local coordinate, $z$, 
$$T_t ^{(1,0)} (\cT) \ni \mu=\mu(z) d\bar{z} {\partial_z}.$$

The coholomogy $H^{(0, 1)}(X_t, \mathcal K^{-1})$ is identified
further with 
$H^0 (X_t, \cK^2)$ as in (\ref{phi-mu}).
In this way, we have an anti-complex linear identification between
$T_t ^{(1,0)} (\cT)$
 with the holomorphic quadratic
differential $H^0 (X_t, \cK^2)$, namely the dual of 
$T_t ^{(1,0)} (\cT)$ with $H^0 (X_t, \cK^2)$.

Let $m\ge 1$.   The map $H^0(X_t, \mathcal K_t^m)\mapsto t \in \mathcal T$ can be used to define 
a  holomorphic Hermitian vector bundle over the Teichm\"uller space $\mathcal T$.  More precisely, $p : \cX \to \cT$ is a smooth proper holomorphic fibration of complex manifolds of (complex) dimensions $3g-2$ and $3g-3$, respectively, such that for each point $t \in \cT$, the fiber over $t$ is the surface, $X_t$.  In the language of \cite{Bo-mz}, $Y = \cT$, and the relative (complex) dimension, $n=1$.  We then consider the holomorphic line bundle $\cL \to \cX$ such that $\cL|_{\cX_t} = \cK^{m-1} _{X_t}:=\mathcal K_t$.  It is well known that $\cL \to \cX$ is equipped with a smooth metric of positive curvature; the positivity follows from \cite[Lemma 5.8]{wolpert86}.  Moreover, Wolpert also showed in that work that $\cX$ is equipped with a K\"ahler metric. 

The direct image sheaf of the relative canonical bundle twisted with $\cL$, 
$$p_* (\cL + \cK_{\cX/\cT})$$ 
is then associated to the vector bundle, $E$, over $\cT$, with fibers
$$E_t = H^0 ( \cX_t, \cK_{\cX_t} + \cL|_{\cX_t}) = H^0 (X_t, \cK^m _{X_t}).$$
Thus, an element in $E_t$ is a holomorphic $(1,0)$ form, $u$, on $X_t$ with values in $\cL|_{X_t}$.  The Kodaira-Spencer map at a point $t \in \cT$ is a map from the holomorphic tangent space to the first Dolbeault cohomology group, $H^{0,1} (X_t, T^{1,0} (X_t))$ of $X_t$ with values in the holomorphic tangent space of $X_t$.  The image of this map is known as the \em Kodaira-Spencer class, \em $K_t$.  This class has a natural action on $u \in E_t$, which is denoted by $K_t \cdot u$.  If $k_t$ is a vector-valued $(0,1)$ form in $K_t$, then  
\begin{equation} \label{kodaira} K_t \cdot u := [k_t \cdot u] \in H^{(0,1)} (X_t, \cK^{m-1} _{X_t}). \end{equation} 
In this setting, we note that the harmonic Beltrami differential, $\mu$, is in the Kodaira-Spencer class, $K_t$, so we may take $k_t = \mu$ above.  Then, the action $\mu \cdot u$ is well-defined defined via $k_t \cdot u$.  

We recall the curvature formula of Berndtsson  \cite{Bob, Bo-mz} for general relative direct image bundle specified
to our case above.  Interestingly, it is precisely the curvature of this bundle which shall appear in our second variational formula for the logarithm of the Selberg zeta function at integer points.  Fix $X=X_t$
and denote  
\begin{equation} \label{fmu} 
f({\mu})=(1+\square_0)^{-1}|\mu|^2 \end{equation}  
where $\square_0=2\bar\partial^*\bar\partial$ is the Laplacian 
on functions.  Although $\mu(z)$ is not a well-defined pointwise function, $|\mu|^2 = |\mu(z)|^2 |d\bar z|^2 |\pa_z|^2$ is indeed well-defined pointwise. Similarly for an element $u \in E_t$, $|u|^2$ is also a well-defined pointwise function.  

\begin{prop} \label{Bob-prop}  The curvature $R^{(m)}(\mu, \mu)$ of
  the bundle  $H^0(X_t, \mathcal K_t^m)\to t\in \mathcal T$ is given by 
\begin{equation}  \label{eq:bob-1} \langle  R^{(m)}(\mu, \mu) u, u\rangle =\Vert [ \mu\cdot u ]\Vert^2, \quad \textrm{for } m=1.  
\end{equation}
Above, $[\mu \cdot u]$ is defined in (\ref{kodaira}), and the norm is with respect to the natural K\"ahler metric on the bundle $E$ induced by the K\"ahler metric on $\cX$.  The norm is taken with respect to the unique harmonic representative in the class $[\mu \cdot u]$.  For $m \ge 2$, the curvature 
 \begin{equation}  \label{eq:bob-2} \langle  R^{(m)}(\mu, \mu)u,
   u\rangle =(m-1)\int_{X} f(\mu)|u|^2 +
 \frac{m-1}2 \langle \left(\square''+\frac{m-1}2 \right)^{-1}(\mu\cdot u),  (\mu\cdot u) \rangle.  \end{equation}
Above, $\square''=\bar\partial{\bar\partial}^\ast
=(\nabla')^\ast \nabla'
$ is the $\bar\partial$-Laplacian acting on $(m-1, 1)$-forms
$\mu\cdot u$ in Lemma  \ref{lemma1}.

Letting $\{u_j\}_{j=1} ^{d_m}$ be an orthonormal basis of $H^0(\mathcal K^m)$, the curvature, 
$$ \cher^{(m)} (\mu, \mu)= \tr \, R^{(m)}(\mu, \mu) $$ 
is given  by
$$ \cher^{(m)} (\mu,  \mu) =\sum_{j=1} ^{d_m} ||[\mu\cdot u_j]||^2, \quad \textrm{for } m=1,$$
where the norm is the same as in (\ref{eq:bob-1}).  For $m\geq 2$, the curvature 
$$\cher^{(m)}(\mu, \mu) =I^{(m)}  + II^{(m)}$$
with 
$$I^{(m)}=(m-1) \sum_{j=1} ^{d_m} \int_X f(\mu) |u_j|^2 $$
and 
$$ II^{(m)}=\frac{m-1}2\sum_{j=1} ^{d_m} \langle \left(\square'' +
\frac{m-1}2\right)^{-1}(\mu\cdot u_j), \mu\cdot u_j\rangle $$
for $m\ge 2$. 
\end{prop}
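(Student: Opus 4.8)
The plan is to establish Proposition \ref{Bob-prop} by specializing Berndtsson's general curvature formula for direct image bundles \cite{Bob, Bo-mz} to the fibration $p : \cX \to \cT$ with relative line bundle $\cL$ satisfying $\cL|_{\cX_t} = \cK^{m-1}_{X_t}$. First I would recall that Berndtsson's formula computes the curvature of $p_*(\cL + \cK_{\cX/\cT})$ in terms of (i) the fiberwise curvature of the metric on $\cL$, contracted twice with the Kodaira--Spencer representative $\mu$, and (ii) a $\bar\partial$-Neumann type correction term involving the inverse of the relevant fiberwise Laplacian acting on the primitive part of $\mu \cdot u$. The key geometric input is that on each fiber $X_t$, equipped with the hyperbolic metric of curvature $-1$, the induced metric on $\cK^{m-1}_{X_t}$ has curvature form equal to $(m-1)$ times the hyperbolic area form (since $\cK_{X_t}$ has curvature the Kähler form, by the Gauss--Bonnet normalization built into $\rho = y^{-2}$); this is exactly the positivity noted via \cite[Lemma 5.8]{wolpert86}. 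Plugging this curvature into the first term of Berndtsson's formula produces the factor $(m-1)\int_X f(\mu)|u|^2$, where $f(\mu) = (1+\square_0)^{-1}|\mu|^2$ arises as the solution of the $\bar\partial$-equation that extracts the horizontal lift; the shift by $1$ in $(1+\square_0)^{-1}$ comes precisely from the curvature eigenvalue $m-1$ being balanced against the Laplacian in the relevant Bochner--Kodaira identity, which is where Lemma \ref{lemma1} enters.

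Next I would treat the second term. Berndtsson's correction term is $\langle (\square_{\text{fiber}})^{-1} P(\mu \cdot u), \mu \cdot u\rangle$ where $P$ projects onto the appropriate space and $\square_{\text{fiber}}$ is the fiberwise Laplacian on $(m-1,1)$-forms. Here I would use Lemma \ref{lemma1} with $k = m-1$ to identify this Laplacian: it gives $\square'' = \bar\partial\bar\partial^* = (\nabla')^*\nabla'$ and, crucially, relates it to $\nabla'(\nabla')^* + \tfrac{m-2}{2}$, so that when one solves the equation for the harmonic correction the operator that appears is $\square'' + \tfrac{m-1}{2}$, yielding the stated term $\tfrac{m-1}{2}\langle(\square'' + \tfrac{m-1}{2})^{-1}(\mu\cdot u), \mu\cdot u\rangle$. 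The case $m=1$ is the degenerate one: the first term vanishes identically since $m-1=0$, and the second term's coefficient also vanishes, but the surviving object is simply $\Vert[\mu\cdot u]\Vert^2$ with the norm taken on the harmonic representative — this is exactly the Weil--Petersson--type curvature formula and is a direct consequence of Berndtsson's result (or the earlier computations going back to Wolpert and Tromba--Wolpert) without any Laplacian inversion, since $H^{(0,1)}(\cK^0)$ has a genuine harmonic theory. I would verify the $m=1$ and $m\ge 2$ formulas are consistent in the limit, noting that naively sending $m\to 1$ in \eqref{eq:bob-2} gives $0$, which reflects that for $m=1$ one must use the harmonic projection rather than the resolvent — a standard subtlety that I would flag explicitly.

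Finally, the Ricci curvature statements follow immediately by taking the trace: $Ric^{(m)}(\mu,\mu) = \tr R^{(m)}(\mu,\mu) = \sum_{j=1}^{d_m}\langle R^{(m)}(\mu,\mu)u_j, u_j\rangle$ for an orthonormal basis $\{u_j\}$ of $H^0(\cK^m)$, and one simply sums the two terms of \eqref{eq:bob-2} over $j$ to obtain $I^{(m)} + II^{(m)}$ as written; the $m=1$ Ricci formula is the sum of $\Vert[\mu\cdot u_j]\Vert^2$. No further work is needed here beyond linearity of the trace and the definition of the Hermitian metric on $E$ induced fiberwise by the $L^2$ inner product \eqref{integration}.

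The main obstacle I anticipate is not any single computation but rather the careful bookkeeping of normalizations: Berndtsson's formula is stated for a general polarization and metric, and matching it to our conventions requires tracking (a) the factor of $4$ discrepancy between $\Delta_0 = -4y^2\partial_z\partial_{\bar z}$ and $\square_0 = 2\bar\partial^*\bar\partial$ noted in the preliminaries, (b) the identification $\cK^{-1} \cong \bar\cK$ and the anti-linear duality $\phi(z) = \rho(z)\overline{\mu(z)}$, and (c) ensuring the curvature of $\cL|_{X_t} = \cK^{m-1}_{X_t}$ is correctly computed as $(m-1)\omega_{\text{hyp}}$ with the sign and scaling that make the Bochner term come out as $(1+\square_0)^{-1}$ rather than, say, $(c + \square_0)^{-1}$ for some other constant $c$. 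Getting the shift in the resolvent to land exactly at $\tfrac{m-1}{2}$ (respectively at $1$ in $f(\mu)$) is the delicate point, and it is precisely where the elementary identity of Lemma \ref{lemma1} — relating $(\nabla')^*\nabla'$ to $\nabla'(\nabla')^* + \tfrac{k-1}{2}$ — does the essential work.
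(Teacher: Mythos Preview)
Your overall strategy---specialize Berndtsson's curvature formula \cite{Bob, Bo-mz} to the Teichm\"uller fibration with $\cL|_{X_t}=\cK^{m-1}_{X_t}$---is exactly the paper's, and the Ricci statements do follow trivially by tracing. However, two of your key computations are off, and in ways that would not self-correct under ``careful bookkeeping.''

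First, the shift in $f(\mu)=(1+\square_0)^{-1}|\mu|^2$ has nothing to do with $m$ or with Lemma~\ref{lemma1}. In Berndtsson's formula the first term is $\int_X c(\psi)|u|^2$ with $\psi=(m-1)\phi$; the linearity $c((m-1)\phi)=(m-1)c(\phi)$ gives the factor $m-1$, while the identification $c(\phi)=(1+\square_0)^{-1}|\mu|^2$ is Schumacher's formula \cite[Proposition~3]{Schumacher-12} for constant curvature $-1$ and is independent of $m$. Your explanation that the $1$ arises from ``the curvature eigenvalue $m-1$ being balanced against the Laplacian'' is wrong and would lead you to look for an $m$-dependent constant there.

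Second, the correction term in Berndtsson's formula is not $\langle(\square_{\text{fiber}})^{-1}P(\mu\cdot u),\mu\cdot u\rangle$ but $\Vert\eta\Vert^2-\Vert\xi\Vert^2$, where $\eta=-\mu\cdot u$ and $\xi$ is the $L^2$-minimal solution of $\bar\partial\xi=-\nabla'\eta$. One computes $\Vert\xi\Vert^2=\langle(\bar\partial\bar\partial^*)^{-1}\nabla'\eta,\nabla'\eta\rangle$, and the crucial step is to apply Lemma~\ref{lemma1} with $k=m$ to the $(m,1)$-form $\nabla'\eta$, giving $\bar\partial\bar\partial^*=\nabla'(\nabla')^*+\tfrac{m-1}{2}$ there; the standard intertwining $(\nabla'(\nabla')^*+c)^{-1}\nabla'=\nabla'((\nabla')^*\nabla'+c)^{-1}$ then pulls the resolvent back to $(m-1,1)$-forms, where a second use of Lemma~\ref{lemma1} (now $k=m-1$) identifies $(\nabla')^*\nabla'=\square''$. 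This yields $\Vert\eta\Vert^2-\Vert\xi\Vert^2=\tfrac{m-1}{2}\langle(\square''+\tfrac{m-1}{2})^{-1}\eta,\eta\rangle$. You invoke Lemma~\ref{lemma1} only at $k=m-1$, which gives a shift $\tfrac{m-2}{2}$, and you never explain how that becomes $\tfrac{m-1}{2}$; as written your pathway cannot produce the correct constant.
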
 
\begin{proof}  For $m=2$, this curvature formula is due to Wolpert; see \cite[Theorem 4.2]{wolpert86}.   The results are proved in  \cite{Bob, Bo-mz} for the general
setup, and we specify them to our case.  We also show how the result in \cite{Sun} 
for our case is a consequence of the general results.  The fibration is now the Teichm\"u{}ller curve, denoted $\cX$ above; $\cX$ is the natural fiber space over Teichm\"u{}ller space, $\mathcal T = \cT_g$.  The fiber, $\cX_t$, for $t \in \cT$ is the Riemann surface $X=X_t$.  The line bundle $\mathcal L=\mathcal K^{m-1}$ whose restriction on each surface $X$ is $\mathcal K_X^{m-1}$.  The metric, indicated by $e^{-\phi}$ in \cite{Bo-mz} on $\mathcal K$ is in our case $\rho^{-1}= y^{2}$ on each fiber $X$.  More precisely, on each fiber, $X$ the metric is $\rho(z) |dz|^2$, with $\rho(z) = y^{-2}$, defined via the identification of $X = \Gamma \backslash \HH$.  The harmonic Beltrami differential $\mu$ here is a representative of the element $k_t$ in the Kodaira-Spencer class as described above.  The first formula  (\ref{eq:bob-1}) for $m=1$ is now an immediate consequence of \cite[Theorem 1.1]{Bo-mz}. 

To state the curvature formula in \cite[Theorem 1.2]{Bo-mz} 
for $m\ge 2$ we recall that the metric on $\mathcal K^{m-1}$ is $e^{-\psi} = y^{2m-2}$, with $\psi=(m-1)\phi$.   The complex gradient $V_{\psi}$, of $\psi$  with respect to the
fixed potential $\phi$ (see \cite[p. 1213]{Bo-mz}) 
can  be chosen as $V_{\psi}=V_{\phi}$.  
The corresponding Kodaira-Spencer class $k_t ^{(m-1)\phi}$
is represented by $\bar \partial V_{\phi}$ restricted to the fiber space and is thus also $k_t$,  so that 
$$k_t ^{(m-1)\phi}=k_t.$$ 
We put 
 $\eta=-k_t\cdot u=-\mu\cdot u$, a $(m-1, 1)$-form.
The curvature formula
in 
\cite[p. 1214--1215]{Bo-mz}
reads as follows
$$\langle R^{(m)}(\mu, \mu)u, u\rangle =\int_X c((m-1)\phi) |u|^2 e^{-\phi}  + \Vert \eta\Vert^2 -\Vert \xi\Vert^2.$$
Here, $\xi$ is the $L^2$-minimal solution of the $\bar\partial$ equation, 
$$\bar\partial \xi =-\nabla'\eta.$$

The function $c((m-1)\phi)$ is linear in $m-1$ by its definition \cite[(1.2)]{Bo-mz}, and so we have $c((m-1)\phi)=(m-1)c(\phi)$. Moreover, as explained in \cite[p. 1217]{Bo-mz}, it follows from
Schumacher's formula \cite[Proposition 3]{Schumacher-12} that 
$$c(\phi) = (1+\square_0)^{-1} |\mu|^2= f(\mu).$$ 
 The equation for $\xi$ is now solved by 
$$
\xi=-({\bar\partial}^\ast  {\bar\partial})^{-1}
{\bar\partial}^\ast\nabla' \eta
=
-{\bar\partial}^\ast
({\bar\partial}{\bar\partial}^\ast )^{-1}
\nabla' \eta.
$$
Thus 
\begin{equation*} \Vert  \xi \Vert^2 =-\langle  {\bar\partial}^\ast ({\bar\partial}{\bar\partial}^\ast )^{-1} \nabla' \eta, \xi\rangle
= \langle  ({\bar\partial}{\bar\partial}^\ast )^{-1} \nabla'  \eta, \nabla' \eta\rangle
\end{equation*}
Now by Lemma \ref{lemma1} we have ${\bar\partial}{\bar\partial}^\ast=
\nabla' (\nabla')^\ast +\frac{m-1}2
$ on the $(m, 1)$ form $\nabla' \eta$ and 
\begin{equation*}
({\bar\partial}{\bar\partial}^\ast)^{-1} \nabla' 
=\left(\nabla' (\nabla')^\ast +\frac{m-1}2\right)^{-1}  \nabla' =\nabla' 
\left(
(\nabla')^\ast\nabla'  +\frac{m-1}2\right)^{-1}
\end{equation*}
on the $(m-1, 1)$ form $\eta$.
Hence
$$
\Vert \xi \Vert^2
=\langle \nabla' 
\left((\nabla')^\ast\nabla'  +\frac{m-1}2\right)^{-1}\eta, \nabla'\eta\rangle
=
\langle \left((\nabla')^\ast\nabla'  +\frac{m-1}2\right)^{-1}\eta, 
(\nabla')^\ast\nabla'\eta\rangle
$$ 
and  finally
$$
\Vert \eta\Vert^2 -\Vert \xi\Vert^2
=\frac{m-1}2\langle \left(
(\nabla')^\ast\nabla' +\frac{m-1}2\right)^{-1}\eta, \eta\rangle
=\frac{m-1}2\langle \left(
\square'' +\frac{m-1}2\right)^{-1}\eta, \eta\rangle,
$$
using again Lemma \ref{lemma1}. This completes the proof.
\end{proof}

\begin{remark} Since the sum $\sum_j |u|_j^2$ in $I^{(m)}$ is the Bergman kernel, its expansion \cite{Lu-AJM} for large $m \in \N$ could be used to compute the expansion of  $\cher^{(m)}(\mu, \mu)$. Here we instead compute the explicit formula for the second variation of the Selberg zeta function because it contains much detailed geometric data.  In this way we obtain the asymptotic expansion of $\cher^{(m)}(\mu, \mu)$ in $m$ as $m \to \infty$ as a corollary.  We note that in \cite{Sun} the second term  $\frac{m-1}2\langle (\square'' +\frac{m-1}2)^{-1}\eta, \eta\rangle$ above appears as  $({m-1})\langle (\Delta +{m-1})^{-1}\eta, \eta\rangle$ 
where $\Delta $ is the $\bar\partial$-Laplacian, $\square''$.
The discrepancy with our formula is due to our definition of the norms on the $L^2$-spaces of $(m, 1)$ forms, so with this consideration, our formulas agree.  
\end{remark}

\subsection{Zeta-regularized determinant, analytic torsion, Ruelle and higher zeta functions} \label{s-zdet} 
Let $\det(\Delta_0 +s(s-1))$ be the zeta-regularized determinant of the Laplacian on scalars.  This is defined through the spectral zeta function.  For $\Re(s) >1$ and $\Re(z) >1$, this spectral zeta function is defined by 
$$\zeta (z) = \sum_{k \in \N} (\lambda_k + s(s-1))^{-z},$$
where $\{ \lambda_k \}_{k \in \N}$ are the eigenvalues of $\Delta_0$.  In the special case $s=1$, the sum above is only taken over the non-zero eigenvalues of $\Delta_0$.  It is well known that the spectral zeta function admits a meromorphic extension to $z \in \C$ which is regular at $z=0$.  The determinant is then defined to be $\exp( - \zeta'(0))$.  

This determinant is closely related to the Selberg zeta function (\ref{SelbergZeta}).  By \cite[Theorem 1]{Sarnak-cmp-87} the determinant,  $\det(\Delta_0 +s(s-1))$, and the Selberg zeta function, $Z(s)$, are related by 
\begin{equation} \label{sarnakzdet}  \det(\Delta_0 +s(s-1))=Z(s)  \left( e^{E-s(s-1)}  \frac {\Gamma_2(s)^2 }{\Gamma(s)}  (2\pi)^{s} \right)^{2g-2}, \quad \Re(s) > 1. \end{equation} 
Above, $g$ is the genus, $\Gamma_2 (s)$ is the Barnes double gamma function, defined by the canonical product 
$$\frac{1}{\Gamma_2 (s+1)} = (2\pi)^{s/2} e^{-s/2 - \frac{\gamma+1}{2}s^2} \prod_{k=1} ^\infty \left( 1 + \frac s k \right)^k e^{-s + s^2/2k}, \textrm{ $\gamma$ is Euler's constant,}$$  
and 
$$E = - \frac 1 4 - \frac 1 2 \log(2\pi) + 2\left( \frac{1}{12} - \log(A) \right).$$ 
Above, $A$ is the Glaisher-Kinkelin constant.\footnote{The term with $\frac{1}{12}-\log(A)$ comes from the derivative of the Riemann zeta function at $-1$.} 
In the special case $s=1$, we have 
\begin{equation} \label{sarnakzdet0} \det(\Delta_0) =  Z'(1) e^{(2g-2)(-1/(12)  -2 \log(A) + (\log(2\pi))/2)}. \end{equation} 

\subsubsection{Holomorphic analytic torsion}
Holomorphic analytic torsion, or $\bar{\partial}$-torsion, as Ray \& Singer originally introduced  in their pioneering work \cite{RaySinger73}, is a complex analogue of analytic torsion.  To define it, let $\mathcal{D}^{p,q}$ be the set of $C^\infty$ complex $(p,q)$-forms on $X$.  Then the exterior differential $d$ splits as 
$$d = d' + d'',$$
where 
$$d' : \mathcal{D}^{p,q} \mapsto \mathcal{D}^{p+1,q},$$
$$d'' : \mathcal{D}^{p,q} \mapsto \mathcal{D}^{p,q+1}.$$
Let $D_{p,q}$ be the corresponding Laplacian,
$$D_{p,q} = * d'*d'' + d''*d'*: \mathcal{D}^{p,q} \mapsto \mathcal{D}^{p,q}.$$
We note that defined in this way, as in \cite{RaySinger73}, the eigenvalues of this operator are non-positive.  Thus, they defined the associated spectral zeta function,  
\begin{equation*}
\zeta_{D_{p,q}} (s)= \sum_{\lambda_n \neq 0} (- \lambda_n)^{-s}
\end{equation*}
for $\Re (s)$ large.  This zeta function may also be expressed in terms of the Mellin transform of the heat trace.  In this way, using the short time asymptotic expansion of the heat trace, one can prove that $\zeta_{D_{p,q}}$ extends to a meromorphic function in $\C$ which is regular at $s=0$.  One may therefore make the following 

\begin{defn}
For each $p = 0, \ldots, N$, where $N$ is the complex dimension of $X$,  the holomorphic analytic torsion, $T_p(X)$, is defined by
$$ \log T_p(X) = \frac{1}{2} \sum_{q=0}^N (-1)^q q \zeta'_{D_{p,q}}(0).$$
\end{defn}

In our case, $N=1$, and so there are two holomorphic analytic torsions, 
\begin{equation} \label{torsions} T_0 (X) = e^{ - \frac{1}{2}\zeta'_{D_{0,1}}(0)} \quad \textrm{and} \quad T_1 (X) = e^{- \frac{1}{2} \zeta'_{D_{1,1}}(0)}. \end{equation} 
It is well known that the non-zero eigenvalues of $D_{0,1}$ coincide with those of $D_{0,0} = \Delta_0$ as do those of $D_{1,1}$ \cite{Berth}.  We therefore have 
\begin{equation} \label{atordet}  T_0 (X) = \sqrt{ \det (\Delta_0)} = T_1 (X). \end{equation} 

\subsubsection{Ruelle and higher zeta functions} 
We shall also consider the Ruelle zeta function.   For $\Re(s) > 1$, the Ruelle zeta function is 
\begin{equation} \label{ruellezeta} 
R(s)
 := \prod_{\gamma \in \prim} \( 1 - e^{-s l(\gamma)}  \).
\end{equation}
Above, $\prim$ denotes the set of conjugacy classes of primitive hyperbolic elements in $\Gamma$.  

The last type of zeta functions which are relevant to our present work are the higher Selberg zeta functions, the first of which was introduced and studied by Kurokawa and Wakayama \cite{Kurokawa2004}:
\begin{equation}\label{higherSelbergZeta}
z(s) := \prod_{\gamma \in \prim} z_\gamma (s), \quad z_\gamma(s) :=  \prod_{k=1}^\infty \(1 - e^{-l(\gamma)\cdot (s+k)}\)^{-k}.	\end{equation}
More generally, this notion of higher Selberg zeta function was generalized in \cite{Hashimoto2005} who defined a whole procession of higher Selberg zeta functions. For $t \in \C$ and $\Re(s) > 1$, let 
\begin{equation}\label{hierSelbergZeta}
z(s, t) := \prod_{\gamma \in \prim} z_\gamma(s,t), \quad z_\gamma(s,t)  :=  \prod_{k=0}^\infty \(1 - e^{-l(\gamma) \cdot (s+k)}\)^{{\binom{t+k-1}{k}} }.
\end{equation}

\section{First variation}
In this section we compute the first variation of $\log Z(s)$. We start by computing the variation of the length of an individual closed geodesic, $\partial_\mu l(\gamma)$.  

\subsection{Variation of the length of a closed geodesic}  \label{s:vargeo} 
We shall demonstrate a variational formula for the length of a closed geodesic in Proposition \ref{lengthvarprop1} using \cite[Theorem 1.1]{Ax-Sch-2012}.  There, Axelsson \& Schumacher worked in the more general context of families of K\"ahler-Einstein manifolds.  Here, we note that by \cite[Theorem 5]{Ahlfors-1961}, for a neighborhood (with respect to the Weil-Petersson metric) in Teichm\"uller space, the corresponding family of Riemann surfaces, each equipped with the unique hyperbolic Riemannian metric of constant curvature $-1$, form a holomorphic family.  Thus, we are indeed in the setting of \cite{Ax-Sch-2012}.  Related works include \cite{Ax-Sch-2010,  wolpert1981}, and \cite{Gon}.  

Here we shall prove our variational formula, Proposition \ref{firstvarlogszf} using Gardiner's formula, \cite[Theorem 8.3]{Im-Ta}.  This formula states that the variation $\partial_\mu l(\gamma)$ is given by\footnote{We note that in \cite[Theorem 8.3]{Im-Ta}, they  considered the real variation, so they have $\Re \l(\mu,\frac 2\pi \Theta_\gamma \)$.  We are taking the complex variation, and so we have the formula above for the variation.} 
\begin{equation}\label{Gardiner}
\partial_\mu l(\gamma) =\l(\mu,\frac 1\pi \Theta_\gamma \)
\end{equation}
where $\Theta_{\gamma}$ is \cite[p. 224]{Im-Ta} 
$$\Theta_{\gamma}=\sum_{\kappa\in \la\gamma\ra \backslash \Gamma}(\omega_\gamma \circ \kappa) \cdot (\kappa')^2. $$
Above, $\omega_\gamma(z)=\(\frac {a-b}{(z-a)(z-b)}\)^2$, where $a, b \in \partial \HH$ are the fixed points of $ \gamma $, $\langle \gamma \rangle$ is the cyclic subgroup of $\Gamma$ generated by $\gamma$, and $\kappa'$ denotes the derivative of $\kappa$. We also recall (c.f. \cite{Gon}) that 
for any $\gamma\in \Gamma$, the vector field 
\begin{equation}\label{qzfield} q_\gamma(z)= (cz^2 + (d-a)z -b) \frac {\partial}{\partial z}
\end{equation}
satisfies 
$$ (\sigma^{-1})_\ast q_\gamma(z)= q_{\sigma^{-1}\gamma\sigma}(z), \quad \forall \, \sigma \in PSL(2, \mathbb R).$$
In particular, the integral
\begin{equation}\label{peridointegraldef}
\int_{z_0}^{\gamma z_0} \varphi(z) (dz)^2q_\gamma(z) = \int_{z_0} ^{\gamma z_0} \varphi(z)  (cz^2 + (d-a)z -b) dz.
\end{equation}
is independent of both the path and the starting point $z_0$ for any weight 4 holomorphic modular form, $\varphi$ for $\Gamma$.  We may therefore use this integral to define the period integral as in  \cite[Definition 1.1.1]{Gon}.
\begin{defn}
For any weight 4 holomorphic modular form $\varphi$ for $\Gamma$ and a hyperbolic element $\gamma \in \Gamma$, we define the period integral $\alpha(\gamma, \varphi)$ by (\ref{peridointegraldef}).
\end{defn} 

\begin{prop} \label{lengthvarprop1} 
The variation of the length $l(\gamma)$ is given by
$$ \frac{\partial}{\partial \mu} l(\gamma)=\frac 12 \int_{\gamma} \mu(z(t))\overline{\dot{z}(t)}^2 \rho(z(t)) dt
= -\frac{\overline{\alpha(\gamma, \phi)}}{ 4 \sinh(l(\gamma)/2)}.$$
Above, $z=z(t)$ is a parametrization of the geodesic representing $\gamma$ with 
$\Vert \dot z(t)\Vert=1$, and the integration is over the footprint of $\gamma$ in $X$.  We note that the variation is independent of the choice of parametrization, and $\phi$ is defined via $\mu$ as in (\ref{phi-mu}).  
\end{prop}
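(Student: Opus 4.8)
The plan is to establish the two equalities separately. The first equality, $\partial_\mu l(\gamma) = \frac12 \int_\gamma \mu(z(t)) \overline{\dot z(t)}^2 \rho(z(t))\, dt$, I would derive from Gardiner's formula \eqref{Gardiner}. The pairing $(\mu, \frac1\pi \Theta_\gamma)$ is the $L^2_{-1,1}$ inner product over a fundamental domain $F$ for $\Gamma$. Using the definition $\Theta_\gamma = \sum_{\kappa \in \langle\gamma\rangle\backslash\Gamma} (\omega_\gamma\circ\kappa)(\kappa')^2$ and the standard unfolding trick, one rewrites $\int_F \langle \mu, \Theta_\gamma\rangle\, \rho\, dA$ as an integral of $\langle \mu, \omega_\gamma\rangle$ over a fundamental domain for $\langle\gamma\rangle\backslash\HH$, which (conjugating $\gamma$ to $z \mapsto e^{l(\gamma)}z$) can be taken to be an annular region. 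A direct computation in these coordinates — where $\omega_\gamma$ has an explicit form and the hyperbolic metric factors conveniently — collapses the two-dimensional integral to the one-dimensional integral along the closed geodesic, producing the factor $\frac12$. The key bookkeeping point is matching the normalization constant $\frac1\pi$ in Gardiner's formula against the Jacobian arising from the change of variables; this is where I expect the computation to be most delicate, though it is standard (cf. \cite{Gon, Im-Ta, wolpert1981}).

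For the second equality, I would relate the geodesic integral to the period integral $\alpha(\gamma,\phi)$. Recall $\phi(z) = \rho(z)\overline{\mu(z)}$ from \eqref{phi-mu}, so $\mu(z)\rho(z) = \overline{\phi(z)}$ when we also use $\rho$ to lower indices appropriately; thus $\mu(z(t))\overline{\dot z(t)}^2 \rho(z(t)) = \overline{\phi(z(t)) \dot z(t)^2}$ up to the conjugation structure. Conjugating $\gamma$ into the standard diagonal form and parametrizing the geodesic axis by $z(t) = e^{t + i\cdot(\text{const})}$ (or the half-line $iy$ with $y = e^t$) with unit hyperbolic speed, the one-form $\phi(z)\dot z^2\, dt$ along the geodesic becomes, after the change of variable, precisely the integrand $\phi(z)(cz^2 + (d-a)z - b)\, dz$ appearing in \eqref{peridointegraldef} — here one uses that for the diagonal element the vector field $q_\gamma$ from \eqref{qzfield} is proportional to $z\,\partial_z$, which is exactly the generator of the geodesic flow along the axis. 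The trace identity $a + d = 2\cosh(l(\gamma)/2)$ (for the lift in $SL(2,\R)$) combined with tracking the proportionality constant between $q_\gamma$ and the unit tangent field yields the factor $\frac{1}{4\sinh(l(\gamma)/2)}$ and the sign.

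The main obstacle I anticipate is the careful tracking of constants and complex conjugations through these two changes of variables — in particular reconciling the convention that $\mu$ is a $(-1,1)$-tensor (so $\mu(z)\,d\bar z\,\partial_z$), that $\phi$ is its dual weight-$4$ form via \eqref{phi-mu}, and that the period integral is defined against the weight-$4$ form $\phi$ rather than $\mu$ itself. A secondary point to check is the path- and basepoint-independence of \eqref{peridointegraldef}, which is already asserted in the excerpt and follows from the transformation law $(\sigma^{-1})_* q_\gamma = q_{\sigma^{-1}\gamma\sigma}$ together with $\phi$ being $\Gamma$-invariant of weight $4$, so that the integrand is a genuine closed $1$-form on $\langle\gamma\rangle\backslash\HH$; I would invoke this rather than reprove it. Finally, I would note explicitly that the first expression is manifestly reparametrization-invariant (the integrand $\mu\,\overline{\dot z}^2\rho$ scales correctly under $t \mapsto t(s)$ with $\dot z\, dt$ invariant), so the choice of unit-speed parametrization is only for concreteness.
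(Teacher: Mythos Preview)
Your outline is correct and follows essentially the same route as the paper: Gardiner's formula, conjugation of $\gamma$ to diagonal form, the annular fundamental domain for $\langle\gamma\rangle$, polar coordinates, and identification of $q_\gamma$ with a multiple of the radial field $z\,\partial_z$. Two small points are worth flagging. First, the reduction of the two-dimensional integral to the one-dimensional geodesic integral is not merely Jacobian bookkeeping: in the paper's argument the key device is the path-independence of the period integral $\alpha(\gamma,\phi)$ (equivalently, holomorphicity of $\phi$), which makes the radial integral $\int_0^l \phi(e^t e^{i\theta}) e^{2t} e^{2i\theta}\,dt$ independent of $\theta$, so that the $\theta$-integration separates and produces the factor $\tfrac12$; you mention path-independence only as a side remark, but it is the engine of the computation. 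Second, for the diagonal representative the constant comes from $d-a = e^{-l/2}-e^{l/2} = -2\sinh(l/2)$, not from the trace $a+d = 2\cosh(l/2)$.
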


\begin{proof}
We can assume $\gamma$ is the diagonal matrix $\gamma=\text{diag}(e^{\frac l 2},  e^{-\frac l 2})$ with $l=l(\gamma)$.  We choose 
$$F_0=\{z\in \HH; 1<|z|<e^{l}\}$$ 
as a fundamental domain of the cyclic subgroup $\langle\gamma\rangle$. Gardiner's formula  (\ref{Gardiner}) can now be written as (see \cite[Theorem 8.3, pp. 226-227]{Im-Ta}) 
$$ \partial_\mu l(\gamma) =\frac 1\pi  \int_{F_0}\frac {\mu(z)}{z^2 } dx dy =\frac 1\pi \int_{F_0} \frac{ {\bar \phi(z) y^2}}{{z^2}}  dx dy.$$
Writing in polar coordinates
$z=e^{t}e^{i\theta}$, $0 < \theta<\pi$, $0 \leq t \leq l$,  $dxdy=e^{2t} dt d\theta$,
 we have
\begin{equation}\label{partialbarlgamma}
\begin{gathered}
\overline{\partial_\mu l(\gamma)}
=\frac 1\pi \int_{F_0}\frac {\phi(z)y^2}{(\bar z)^2} dxdy
=\frac 1\pi \int_0^\pi \int_{0}^{l}\phi(e^t e^{i\theta}) \frac{(e^t\Im e^{i\theta})^2}{ (e^t e^{-i\theta})^2} e^{2t}
dt d\theta \\
=-\frac 1\pi  \frac 14
\int_0^\pi 
\int_{0}^{l}\phi(e^t e^{i\theta}) (1-e^{2i\theta})^2 e^{2t}
dt d\theta.
\end{gathered}
\end{equation}

Consider the segment, $R_\theta$, defined by $z(t)=e^t e^{i\theta}$, $0 \leq t \leq l$.  Then the vector field $q(z)=q_\gamma(z)$ from (\ref{qzfield}) is 
$$q(z)= - 2 \( \sinh  \frac {l(\gamma)}{2} \) \frac{d}{dt}$$
and the quadratic form 
$$\phi(z) (dz)^2= \phi(e^t e^{i\theta}) e^{2t} e^{2i \theta}  (dt)^2.$$ 
Hence,
\begin{equation} \label{alphaC} \alpha(\gamma, \phi) = \int_{R_\theta} \phi(z) (dz)^2q(z) = -2 \(\sinh\frac {l(\gamma)}{ 2} \) \int_{0}^l  \phi(e^t e^{i\theta}) e^{2t} e^{2i \theta} dt. \end{equation}  
With the small observation that $(\dot z(t))^2 = (z(t))^2$, we also have 
\begin{equation} \label{alphaC3} - \frac{ \alpha(\gamma, \phi)}{4 \(\sinh\frac {l(\gamma)}{ 2} \) }  = \frac1 2 \int_\gamma \phi(z(t)) (\dot z(t))^2 dt = \frac 1 2 \int_{\gamma} \overline{\mu(z(t))} (\dot{z}(t))^2 \rho(z(t)) dt. \end{equation} 
Note that the integral in (\ref{alphaC}) does not depend on the path nor the starting point.  Consequently, for any $\theta \in (0, \pi)$, 
$$ \int_{0}^l  \phi(e^t e^{i\theta}) e^{2t} e^{2i \theta} dt = - \int_{0}^l  \phi(e^t i) e^{2t} dt =: C, $$
and 
\begin{equation} \label{alphaC2} \alpha(\gamma, \phi) =  - 2 \(\sinh\frac {l(\gamma)}{ 2} \) C. \end{equation} 
Substituting $C$ into (\ref{partialbarlgamma}), we obtain
$$\overline{\partial_\mu l(\gamma)} =-\frac 1\pi \frac 14 C \int_0^\pi 
(1-e^{2i\theta})^2 e^{-2i \theta}d\theta = -\frac 1\pi  \frac 14 C (-2\pi)= \frac 1 2 C. $$
By (\ref{partialbarlgamma}),  (\ref{alphaC2}), and (\ref{alphaC3})
$$  \overline{\partial_\mu l(\gamma)} = \frac 1 2 C = - \frac{ \alpha(\gamma, \phi)} {4 \sinh(l(\gamma)/2)} = \frac 12 \int_{\gamma} \overline{\phi(z(t))}\,\overline{\dot{z}(t)}^2 dt = \frac 12 \int_{\gamma} \mu(z(t)) \overline{\dot{z}(t)}^2 \rho(z(t)) dt.
$$
\end{proof}

\begin{remark}  Our approach to compute the variation of the length may be compared to Wolpert's in \cite{wolpert86}.  \end{remark}

We also recall the second variation formula of Axelsson-Schumacher \cite[Theorem~6.2]{Ax-Sch-2012}.

\begin{prop} \label{lengthvarprop2} 
The variation $\bar\partial_\mu\partial_\mu l$ of $l=l(\gamma)$ is given by
$$\bar\partial_\mu\partial_\mu l =\frac 12 \int_{\gamma}\left( (\square_0 +1)^{-1}(|\mu|^2) + \(-\frac{D^2}{dt^2} +2\)^{-1} (\mu) \bar \mu \right)  + \frac{1}{l}|\partial_\mu l|^2. $$
Here $\frac{D}{dt}$ is differentiation along the geodesic $\gamma$.   
\end{prop}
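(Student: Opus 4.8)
The plan is to deduce the identity directly from the general second variation formula for geodesic length functions in holomorphic families of K\"ahler-Einstein manifolds of negative Ricci curvature, \cite[Theorem~6.2]{Ax-Sch-2012}, specialized to the present one-dimensional, constant curvature $-1$ situation. First I would check that the hypotheses of \cite{Ax-Sch-2012} are met: by \cite[Theorem~5]{Ahlfors-1961} the Teichm\"uller curve $\cX \to \cT$ restricts, near any $t \in \cT$, to a holomorphic family of compact Riemann surfaces, each fiber $X_t$ carrying its hyperbolic metric of curvature $-1$ (which is K\"ahler-Einstein with $c_1 < 0$, trivially so since $\dim_{\C} X_t = 1$), and the closed geodesic in the free homotopy class of $\gamma$ varies real-analytically with $t$; moreover the deformation direction is represented, as everywhere in this paper, by the harmonic Beltrami differential $\mu$, which is exactly the harmonic Kodaira-Spencer representative used in \cite{Ax-Sch-2012}. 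Hence their Theorem~6.2 applies, and what remains is to rewrite its conclusion in the normalizations fixed in \S 2.

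That rewriting is a term-by-term matching of the three pieces of the Axelsson-Schumacher formula: a bulk term, a term localized on the geodesic, and a reparametrization term. The bulk term is the integral over $\gamma$ of the solution of the fiberwise elliptic equation $(\square_0 + 1)g = |\mu|^2$; but this $g$ is precisely the Schumacher function $f(\mu)$ of \eqref{fmu} (the same object that entered Berndtsson's curvature formula in Proposition \ref{Bob-prop}), so this piece contributes $\tfrac12\int_\gamma (\square_0+1)^{-1}(|\mu|^2)$. Here one must carry along the normalization $\square_0 = 2\bar\partial^*\bar\partial = \tfrac12\Delta_0$ together with the Einstein constant of the curvature $-1$ metric, both of which feed the constant ``$1$''.

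The term localized on $\gamma$ is the integral over the geodesic of $\bar\mu$ paired against the Green's operator, \emph{along $\gamma$}, of the restriction $\mu|_\gamma$. Since a closed geodesic in a surface is real one-dimensional with one-dimensional normal bundle, this Green's operator is that of a scalar Sturm-Liouville operator $-\tfrac{D^2}{dt^2} + c$ on $l$-periodic sections, $D/dt$ being covariant differentiation along $\gamma$; the constant $c$ is determined by the ambient curvature, and in constant curvature $-1$ one finds $c = 2$ (the normal Jacobi contribution $-K = 1$ together with a further $+1$ from the conformal weight of $\mu \in H^{(0,1)}(X_t, \mathcal K^{-1})$). As $\operatorname{spec}(-\tfrac{D^2}{dt^2}+2) = \{(2\pi n/l)^2 + 2 : n \in \Z\} \subset (0,\infty)$, this operator is invertible and the term $\tfrac12\int_\gamma (-\tfrac{D^2}{dt^2}+2)^{-1}(\mu)\,\bar\mu$ is well defined. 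Finally, the reparametrization term $\tfrac1l|\partial_\mu l|^2$ arises because the variation is taken along the family of arc-length parametrized geodesics; equivalently, \cite{Ax-Sch-2012} most naturally produces the Hessian of $\log l$, and $\bar\partial_\mu\partial_\mu l = l\,\bar\partial_\mu\partial_\mu\log l + \tfrac1l|\partial_\mu l|^2$. This is consistent with the first variation already established in Proposition \ref{lengthvarprop1}. Collecting the three contributions yields the asserted formula.

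The main obstacle is exactly the normalization bookkeeping in the two middle steps: one must identify the K\"ahler-Einstein normalization and the sign and weight conventions of \cite{Ax-Sch-2012}, convert them into the conventions $\rho = y^{-2}$, $\square_0 = 2\bar\partial^*\bar\partial$, and $\langle\cdot,\cdot\rangle_{k,l}$ of \S 2, and---the most delicate point---confirm that the conformal weight of $\mu$ upgrades the naive normal Jacobi operator $-\tfrac{D^2}{dt^2}+1$ to $-\tfrac{D^2}{dt^2}+2$. Should this matching turn out to be awkward, a self-contained alternative would be to differentiate Gardiner's formula \eqref{Gardiner} a second time, tracking the variation of the multiplier and fixed points of $\gamma$ inside the moving Fuchsian group $\Gamma^{\eps}$; this is feasible but notationally heavier, so I would keep it only as a fallback.
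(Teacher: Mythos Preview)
Your proposal is correct and takes the same approach as the paper: both invoke \cite[Theorem~6.2]{Ax-Sch-2012}. In fact the paper does not give a proof at all---it simply states the proposition as a recalled result from Axelsson--Schumacher---so your sketch of the normalization matching actually supplies more detail than the paper itself.
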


An immediate consequence of  \cite[Corollary 1.7]{Ax-Sch-2012} is 
\begin{cor} \label{cor2} 
There exists a constant $C>0$ which is independent of $\gamma$ 
such that 
$$|\partial_\mu l(\gamma) |\le C \Vert \mu\Vert_{\infty} l(\gamma),\quad \textrm{and} \quad |\bar\partial_\mu\partial_\mu l (\gamma) 
|\le C  \Vert \mu\Vert_{\infty}^2 l (\gamma).$$
\end{cor}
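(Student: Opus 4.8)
\textbf{Proof plan for Corollary \ref{cor2}.} The plan is to read off both estimates directly from the second variation formula in Proposition \ref{lengthvarprop2}, bounding each of the three terms on the right-hand side by quantities of the stated size. For the first variation, I would start from the integral representation $\partial_\mu l(\gamma) = \frac12 \int_\gamma \mu(z(t))\overline{\dot z(t)}^2 \rho(z(t))\,dt$ of Proposition \ref{lengthvarprop1}. Since $|\overline{\dot z(t)}^2 \rho(z(t))| = \|\dot z(t)\|^2 = 1$ along the unit-speed parametrization, the integrand is bounded pointwise by $|\mu(z(t))| \le \|\mu\|_\infty$, and the length of the footprint of $\gamma$ in $X$ is exactly $l(\gamma)$; hence $|\partial_\mu l(\gamma)| \le \frac12 \|\mu\|_\infty l(\gamma)$, which gives the first inequality with $C = \frac12$ (or any $C \ge \frac12$).

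For the second estimate, I would bound each of the three summands in Proposition \ref{lengthvarprop2} separately. The third term, $\frac1l |\partial_\mu l(\gamma)|^2$, is immediately controlled by the first estimate: it is at most $\frac1l \cdot \frac14 \|\mu\|_\infty^2 l^2 = \frac14 \|\mu\|_\infty^2 l(\gamma)$. For the first term, $\frac12\int_\gamma (\square_0 + 1)^{-1}(|\mu|^2)$, the key point is that $(\square_0+1)^{-1}$ is a positivity-preserving operator with operator norm at most $1$ in $L^\infty$: since $|\mu|^2 \le \|\mu\|_\infty^2$ pointwise and the constant function $\|\mu\|_\infty^2$ satisfies $(\square_0+1)^{-1}(\|\mu\|_\infty^2) = \|\mu\|_\infty^2$ (as $\square_0$ annihilates constants), the maximum principle gives $0 \le (\square_0+1)^{-1}(|\mu|^2) \le \|\mu\|_\infty^2$ pointwise, and integrating over the geodesic of length $l(\gamma)$ yields a bound of $\frac12\|\mu\|_\infty^2 l(\gamma)$. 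The analogous argument handles the second term $\frac12\int_\gamma (-\tfrac{D^2}{dt^2}+2)^{-1}(\mu)\,\bar\mu$: the ordinary differential operator $-\tfrac{D^2}{dt^2}+2$ on (sections along) the closed geodesic is invertible with a resolvent that is again bounded by $\frac12$ in the sup-norm (its Green's function integrates to $\frac12$), so $|(-\tfrac{D^2}{dt^2}+2)^{-1}(\mu)| \le \frac12\|\mu\|_\infty$ pointwise, and the integrand is bounded by $\frac12\|\mu\|_\infty^2$, giving $\frac14\|\mu\|_\infty^2 l(\gamma)$. Summing the three contributions produces the claimed bound $|\bar\partial_\mu\partial_\mu l(\gamma)| \le C\|\mu\|_\infty^2 l(\gamma)$ with an explicit $C$; alternatively, one simply invokes \cite[Corollary 1.7]{Ax-Sch-2012} directly, as the statement indicates.

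The one point requiring a little care — and the place where I would be most careful about constants rather than mere boundedness — is the uniformity in $\gamma$. Each geodesic $\gamma$ lives on the \emph{same} surface $X = X_t$, so the operator $\square_0$ and hence the bound $\|(\square_0+1)^{-1}\|_{L^\infty \to L^\infty} \le 1$ is completely $\gamma$-independent; the one-dimensional resolvent bound $\frac12$ for $(-\tfrac{D^2}{dt^2}+2)^{-1}$ on a circle of circumference $l(\gamma)$ is also uniform in the circumference (indeed it only improves as $l$ grows, approaching the whole-line value $\frac{1}{2\sqrt2}\cdot\sqrt2 = \frac12$). The only $\gamma$-dependent quantity left is the factor $l(\gamma)$ itself, which appears linearly as claimed. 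Thus no delicate analysis is needed: the corollary is essentially a bookkeeping exercise combining Propositions \ref{lengthvarprop1} and \ref{lengthvarprop2} with the elementary sup-norm bounds on two standard resolvents, and the main (minor) obstacle is merely tracking that every bound used is independent of the particular primitive geodesic.
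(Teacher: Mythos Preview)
Your proposal is correct. The paper itself gives no proof at all: it simply states the corollary as ``an immediate consequence of \cite[Corollary~1.7]{Ax-Sch-2012}''. Your argument unpacks precisely what that citation contains, deriving both bounds directly from the integral formulas in Propositions~\ref{lengthvarprop1} and~\ref{lengthvarprop2} together with the elementary $L^\infty$ bounds on the resolvents $(\square_0+1)^{-1}$ and $(-D^2/dt^2+2)^{-1}$; you also correctly note that one may alternatively just invoke the Axelsson--Schumacher reference, which is exactly what the paper does.
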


\subsection{Convergence}
In order to rigorously justify our variational formulas we must demonstrate the convergence of a certain sum.  

\begin{lemma}\label{convergence_lemma}
For $n \in \N$, the sum 
\begin{equation*}
	\sum_{\gamma \in \prim} \partial_\mu (l(\gamma))   \sum_{k \ge 1} \frac{k^n \cdot l(\gamma)}{e^{l(\gamma)\cdot (s+k)}-1}
	\end{equation*}
	converges absolutely and uniformly on compact subsets of $\Re(s) > 1$.  
\end{lemma}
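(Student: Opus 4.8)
The plan is to bound the geometric factors and the arithmetic factors separately, and then invoke the standard counting estimate for closed geodesics (the prime geodesic theorem, or just its crude form) to conclude. First I would use Corollary \ref{cor2}: there is a constant $C>0$, independent of $\gamma$, with $|\partial_\mu l(\gamma)| \le C \Vert \mu \Vert_\infty \, l(\gamma)$. Hence the absolute value of the $\gamma$-term is at most
$$
C \Vert \mu \Vert_\infty \, l(\gamma) \sum_{k \ge 1} \frac{k^n \, l(\gamma)}{e^{l(\gamma)(s+k)} - 1}.
$$
Since we work on a compact subset $K$ of $\{\Re(s) > 1\}$, we may fix $\sigma_0 = \inf_{s \in K} \Re(s) > 1$. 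For the inner sum, I would estimate $e^{l(\gamma)(s+k)} - 1 \ge \tfrac12 e^{l(\gamma)(\Re(s)+k)} \ge \tfrac12 e^{l(\gamma)(\sigma_0 + k)}$ once $l(\gamma)(\sigma_0+k) \ge \log 2$, which holds for all $\gamma$ and all $k \ge 1$ because $l(\gamma) \ge l_0 > 0$ and $\sigma_0 + k > 1$; so crudely $e^{l(\gamma)(s+k)}-1 \ge c\, e^{l(\gamma)(\sigma_0+k)}$ with $c$ depending only on $l_0$ and $\sigma_0$. Therefore the inner sum is bounded by
$$
\frac{1}{c} \sum_{k \ge 1} k^n \, l(\gamma) \, e^{-l(\gamma)(\sigma_0 + k)} = \frac{l(\gamma)}{c}\, e^{-l(\gamma)\sigma_0} \sum_{k \ge 1} k^n e^{-l(\gamma) k}.
$$
Using $l(\gamma) \ge l_0$, the series $\sum_{k\ge1} k^n e^{-l(\gamma)k}$ is dominated by the convergent series $\sum_{k\ge1} k^n e^{-l_0 k} =: M_n < \infty$, a finite constant depending only on $n$ and $l_0$. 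Combining, the $\gamma$-term is bounded above by a constant times $l(\gamma)^3 e^{-\sigma_0 l(\gamma)}$.

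It remains to sum $\sum_{\gamma \in \prim} l(\gamma)^3 e^{-\sigma_0 l(\gamma)}$. Here I would invoke the elementary counting bound: the number $N(L) := \#\{\gamma \in \prim : l(\gamma) \le L\}$ grows at most like $e^L$ up to a polynomial factor (this follows, e.g., from the convergence of the Selberg zeta product \eqref{SelbergZeta} for $\Re(s) > 1$, or directly from lattice-point counting in $\HH$). Splitting the geodesics into dyadic shells $L \le l(\gamma) < L+1$ and using $N(L+1) - N(L) = O(e^{L})$ (times a polynomial), the sum over each shell is $O\big((L+1)^3 e^{-\sigma_0 L} e^{L}\big) = O\big((L+1)^3 e^{-(\sigma_0 - 1)L}\big)$, which is summable in $L$ since $\sigma_0 > 1$. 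This shows absolute convergence, uniformly for $s \in K$, because all the bounds depended on $s$ only through $\sigma_0 = \inf_{s\in K}\Re(s)$.

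The main obstacle — really the only nontrivial input — is the counting estimate for $\#\prim$: one needs that the number of primitive closed geodesics of length at most $L$ is at most exponential, $O(e^{L})$ up to polynomial corrections. If one wishes to avoid citing the prime geodesic theorem, one can instead observe that $\sum_\gamma e^{-\sigma_0 l(\gamma)} < \infty$ for $\sigma_0 > 1$ is exactly the statement that the Ruelle zeta product \eqref{ruellezeta} (equivalently the first factor $k=0$ of the Selberg product \eqref{SelbergZeta}) converges for $\Re(s)>1$, which is classical; absorbing the polynomial factor $l(\gamma)^3$ costs only an arbitrarily small exponential, so $\sum_\gamma l(\gamma)^3 e^{-\sigma_0 l(\gamma)} \le \sum_\gamma e^{-\sigma_0' l(\gamma)} \cdot \sup_\gamma\big(l(\gamma)^3 e^{-(\sigma_0 - \sigma_0')l(\gamma)}\big) < \infty$ for any $1 < \sigma_0' < \sigma_0$, the supremum being finite since $x^3 e^{-\delta x}$ is bounded on $[l_0,\infty)$. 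This completes the argument.
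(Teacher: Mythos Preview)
Your argument is correct and follows essentially the same strategy as the paper: bound $|\partial_\mu l(\gamma)|$ linearly in $l(\gamma)$, control the inner $k$-sum by an exponentially decaying term in $l(\gamma)$, and then appeal to the convergence of $\sum_{\gamma} e^{-\sigma l(\gamma)}$ for $\sigma>1$, absorbing the leftover polynomial factor in $l(\gamma)$ by sacrificing an arbitrarily small amount of the exponent.

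The execution differs slightly. The paper handles the $k^n$ factor by choosing $\varepsilon>0$ with $k^n<e^{\varepsilon k}$, then restricts to geodesics with $l(\gamma)>\max(1,\varepsilon)$ so that $\sum_k e^{-(l(\gamma)-\varepsilon)k}$ is geometric; the finitely many short geodesics are treated separately, and a final limiting argument in $L$ recovers the whole half-plane $\Re(s)>1$. You instead bound $\sum_{k\ge1} k^n e^{-l(\gamma)k}\le \sum_{k\ge1}k^n e^{-l_0 k}=M_n$ uniformly in $\gamma$, which avoids both the short/long split and the limiting step. Your route is a bit more direct; the paper's route makes the dependence on $n$ slightly more explicit but is otherwise equivalent.

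One cosmetic point: after combining your bounds the $\gamma$-term is actually $O\!\big(l(\gamma)^2 e^{-\sigma_0 l(\gamma)}\big)$, not $l(\gamma)^3$ (one factor from $|\partial_\mu l(\gamma)|$, one from the explicit $l(\gamma)$ in the numerator, none from $M_n$). This overcount is harmless for the conclusion.
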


\begin{proof}
Let $\varepsilon> 0$ satisfy 
	\begin{equation}\label{conv_est1}
	k^n < e^{ \varepsilon k}, \quad k \in \N \backslash \{0 \}.
	\end{equation} 
	First, we consider only geodesics whose lengths, $L$ satisfy $L > \max(1,\varepsilon)$. This implies $e^{l(\gamma)\cdot (\Re(s)+k) - 1} > 1$ as $\Re(s)+k \ge 1$, and hence 
	\begin{equation*}
	e^{l(\gamma) \cdot (\Re(s)+k) - 1}( e - 1 ) > 1 
	\end{equation*}
	and 
	\begin{equation}\label{conv_est2}
	e^{l(\gamma) \cdot (\Re(s)+k)} - 1>  e^{l(\gamma) \cdot (\Re(s)+k)-1}.
	\end{equation}
	It follows from (\ref{conv_est1}) and (\ref{conv_est2}) that
	\begin{equation}\label{conv_est4}
	\frac{k^n}{|e^{l(\gamma)\cdot(s+k)}-1|} \le \frac{k^n}{e^{l(\gamma)\cdot(\Re(s)+k)}-1} \le \frac{e}{e^{l(\gamma)\cdot (k+\Re(s))-\varepsilon\cdot k}}.
	\end{equation}
	Note that
	\begin{equation}\label{conv_est3}
	\sum_{k \ge 1} \frac{1}{e^{l(\gamma)\cdot (k+\Re(s))-\varepsilon\cdot k}} = \frac{1}{e^{l(\gamma) \cdot \Re(s)} } \sum_{k \ge 1} \frac{1}{e^{k \cdot (l(\gamma)-\epsilon)}} \le  \frac{1}{e^{l(\gamma) \cdot \Re(s) } } \cdot  \frac{1}{e^{l(\gamma)-\epsilon}-1} . 
	\end{equation}
	The geometric progression converges because $l(\gamma) - \varepsilon > 0$,  hence $e^{l(\gamma)-\varepsilon} > 1$. Combining (\ref{conv_est3}) and (\ref{conv_est4}) with $|\partial_\mu l(\gamma)| \le \frac{1}{2} \| \mu\|\cdot l(\gamma)$, we obtain
$$\sum_{\gamma \in \prim, l(\gamma) \ge L} 
| \partial_\mu (l(\gamma))|    \sum_{k \ge 0} \left|\frac{k^n \cdot l(\gamma)}{e^{l(\gamma)\cdot (s+k)}-1}\right| \le  e \| \mu\| \cdot \sum_{\gamma \in \prim, l(\gamma) \ge L}  \frac{1}{e^{l(\gamma)-\epsilon}-1} \frac{l(\gamma)^2}{e^{l(\gamma) \cdot \Re(s) }}$$ 

$$\leq \frac{1}{e^{l_0 - \epsilon} -1} e \| \mu\|_\infty \cdot \sum_{\gamma \in \prim, l(\gamma) \ge L}\frac{l(\gamma)^2}{e^{l(\gamma) \cdot \Re(s) }},$$
where $l_0$ denotes the length of the shortest closed geodesic.   
	Recall that  \cite[p. 33]{borthwick2007spectral},
	\begin{equation}
	\sum_{\gamma \in \prim} e^{- l(\gamma) s_1}
	\end{equation}
	converges absolutely for $\Re(s_1) > \delta=1$. Choose $\varepsilon_1(L) > 0$ such that $l(\gamma)^2 \le e^{l(\gamma) \cdot \varepsilon_1(L)}$ for  $l(\gamma) \ge L$.
	\begin{equation}
	\sum_{\gamma \in \prim, l(\gamma) \ge 1} \frac{l(\gamma)^2}{e^{l(\gamma) \cdot \Re(s) }} \le \sum_{\gamma \in \prim} \frac{1}{e^{l(\gamma) \cdot (\Re(s) - \varepsilon_1(L))}}
	\end{equation}
	converges for $\Re(s) > \delta + \varepsilon_1(L)$.
	We are left with $l(\gamma) < L$. As there are only finite many such geodesics, it is obvious that the series
	\begin{equation}
	\sum_{\gamma \in \prim,\,  l(\gamma) < L} \sum_{k \ge 0} \frac{k^n \cdot l(\gamma)}{e^{l(\gamma)\cdot (s+k)}-1}
	\end{equation}
	converges absolutely for any fixed $s \in \C$. 
	
	The last step is to consequently take $L$ larger and larger, allowing us to make  $\varepsilon_1(L)$ arbitrary small.  This shows that the series converges absolutely and uniformly on compact subsets of $\Re(s) > 1$.  
\end{proof}

\subsection{First variational formula of $Z(s)$ for $\Re s>1$} 
We introduce the local zeta function as in \cite{Gon}, 
\begin{equation} \label{localzeta} 
z_\gamma(s):=\prod_{k\ge 1}
\left(1-e^{-l(\gamma) (s+k)} \right)^{-k}
\end{equation} 
We also recall the local Selberg zeta function for convenience here, 
\begin{equation} \label{localszf} Z_\gamma(s) = \prod_{k \in \N} \left( 1- e^{l(\gamma) (s+k)} \right). \end{equation} 
We define further the higher local zeta function, 
\begin{equation} \label{localhighzeta} 
\tilde z_\gamma(s):=\prod_{k\ge 1}
\left(1-e^{-l(\gamma) (s+k)} \right)^{-k^2}
\end{equation} 
We now have requisite tools to prove the first variational formula.  

\begin{prop} \label{firstvarlogszf} 
Suppose $\Re(s) >1$.
$$
\partial_\mu \log Z(s)
=\sum_{
\gamma \in \prim}
\partial_\mu \log l(\gamma)\(
s\frac {d}{ds}\log Z_{\gamma}(s)
+\frac {d}{ds}\log z_{\gamma}(s)^{-1}\).
$$
Above, $\frac{\partial \log l(\gamma)}{\partial \mu}
=l(\gamma)^{-1} \frac{\partial l(\gamma)}{\partial \mu}$
and  $\frac{\partial l(\gamma)}{\partial \mu}$ is given by Propositions \ref{lengthvarprop1} and \ref{lengthvarprop2}.
\end{prop}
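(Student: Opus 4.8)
The plan is to differentiate the convergent product defining $Z(s)$ term-by-term, using the absolute and locally uniform convergence established in Lemma \ref{convergence_lemma} to justify interchanging $\partial_\mu$ with the infinite sums. Concretely, for $\Re(s) > 1$ we write
\begin{equation*}
\log Z(s) = \sum_{\gamma \in \prim} \sum_{k \ge 0} \log\!\left(1 - e^{-l(\gamma)(s+k)}\right),
\end{equation*}
and apply $\partial_\mu$ inside. Since $l(\gamma)$ depends on the point $t \in \cT$ through the Beltrami differential $\mu$ but $s$ and $k$ do not, we have
\begin{equation*}
\partial_\mu \log\!\left(1 - e^{-l(\gamma)(s+k)}\right) = \frac{(s+k)\, e^{-l(\gamma)(s+k)}}{1 - e^{-l(\gamma)(s+k)}} \,\partial_\mu l(\gamma) = \frac{(s+k)\,\partial_\mu l(\gamma)}{e^{l(\gamma)(s+k)} - 1}.
\end{equation*}
Summing over $k$ and $\gamma$ and pulling out $\partial_\mu \log l(\gamma) = l(\gamma)^{-1}\partial_\mu l(\gamma)$, the $\gamma$-th contribution becomes $\partial_\mu \log l(\gamma)$ times $\sum_{k\ge 0} \frac{(s+k)\,l(\gamma)}{e^{l(\gamma)(s+k)}-1}$. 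It then remains to recognize this inner series as the stated combination of logarithmic derivatives of $Z_\gamma$ and $z_\gamma$.

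For that identification, I would split $(s+k) = s + k$ in the numerator. The part with the factor $s$ gives $s \sum_{k\ge 0} \frac{l(\gamma)}{e^{l(\gamma)(s+k)}-1}$, and differentiating the definition $\log Z_\gamma(s) = \sum_{k\ge 0}\log(1 - e^{-l(\gamma)(s+k)})$ in $s$ shows that $\frac{d}{ds}\log Z_\gamma(s) = \sum_{k\ge 0}\frac{l(\gamma)}{e^{l(\gamma)(s+k)}-1}$, so this part equals $s\,\frac{d}{ds}\log Z_\gamma(s)$. The part with the factor $k$ gives $\sum_{k\ge 1}\frac{k\,l(\gamma)}{e^{l(\gamma)(s+k)}-1}$; differentiating $\log z_\gamma(s)^{-1} = \sum_{k\ge 1} k \log(1 - e^{-l(\gamma)(s+k)})$ from (\ref{localzeta}) in $s$ yields exactly $\frac{d}{ds}\log z_\gamma(s)^{-1} = \sum_{k\ge 1}\frac{k\,l(\gamma)}{e^{l(\gamma)(s+k)}-1}$. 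Adding the two pieces gives $s\frac{d}{ds}\log Z_\gamma(s) + \frac{d}{ds}\log z_\gamma(s)^{-1}$, which is the claimed formula.

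The only genuine subtlety — and the step where care is needed — is the justification of termwise differentiation: one must check that the differentiated double series converges absolutely and uniformly on compact subsets of $\{\Re(s)>1\}$, so that $Z(s)$ is holomorphic there and $\partial_\mu$ commutes with both sums. This is precisely what Lemma \ref{convergence_lemma} provides (with $n=0$, and with $n=1$ for the $k$-weighted piece), together with the bound $|\partial_\mu l(\gamma)| \le \tfrac12\|\mu\|_\infty l(\gamma)$ from Corollary \ref{cor2}; one should also note $\partial_\mu \log l(\gamma) = l(\gamma)^{-1}\partial_\mu l(\gamma)$ is uniformly bounded in $\gamma$ by $\tfrac12\|\mu\|_\infty$ since $l(\gamma) \ge l_0 > 0$. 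A minor point is holomorphic dependence on $\mu$ versus $t$: $\partial_\mu$ here denotes the directional derivative along the one-parameter family $X^{\eps\mu}$ described in the introduction, and $l(\gamma)$ varies real-analytically (indeed the relevant dependence is captured by Propositions \ref{lengthvarprop1} and \ref{lengthvarprop2}), so the chain rule applies without issue. Everything else is the bookkeeping above.
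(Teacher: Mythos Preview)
Your proof is correct and follows essentially the same approach as the paper's: differentiate the double series for $\log Z(s)$ term-by-term, identify the resulting inner sum via the split $(s+k)=s+k$ with $s\frac{d}{ds}\log Z_\gamma(s)+\frac{d}{ds}\log z_\gamma(s)^{-1}$, and invoke Lemma~\ref{convergence_lemma} for the justification. Your computation of $\frac{d}{ds}\log z_\gamma(s)^{-1}=\sum_{k\ge 1}\frac{k\,l(\gamma)}{e^{l(\gamma)(s+k)}-1}$ (with the factor $l(\gamma)$ in the numerator) is in fact the correct one needed for the factorization through $\partial_\mu\log l(\gamma)$.
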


\begin{proof} We compute by definition of $Z(s)$ and Lemma \ref{convergence_lemma} which allows us to differentiate termwise,  
$$\pa_\mu \log Z_\gamma(s) = \sum_{\gamma \in \prim} \sum_{k \in \N} \pa_\mu \left( \log(1-e^{-l(\gamma) (s+k)}) \right)= \sum_{\gamma \in \prim} \sum_{k \in \N} \frac{ \pa_\mu (l(\gamma)) (s+k) e^{-l(\gamma) (s+k)}}{1-e^{-l(\gamma)(s+k)}}$$
$$= \sum_{\gamma \in \prim} \sum_{k \in \N}  \partial_\mu l(\gamma) \frac{s+k}{e^{l(\gamma)(s+k)}-1}.$$
By Lemma \ref{convergence_lemma}, this converges absolutely and uniformly on compact subsets of $\Re(s)~>~1$.  
Recalling the definition of the local Selberg zeta function (\ref{localszf}), and the local zeta function (\ref{localzeta}) we compute in a similar fashion that 
$$\frac{d}{ds} \log Z_\gamma (s) = \sum_{k \in \N} \frac{ l(\gamma)}{e^{l(\gamma)(s+k)} - 1} \textrm{ and }  \frac{d}{ds} \log z_\gamma (s)^{-1} = \sum_{k \in \N} \frac {k}{e^{l(\gamma) (s+k)}-1}.$$
Therefore we have 
\begin{equation} \label{sdds} s \frac{d}{ds} \log Z_\gamma(s) = \sum_{k \in \N} \frac{ s l(\gamma)}{e^{l(\gamma)(s+k)} - 1} \textrm{ and } \frac{d}{ds}  \log z_\gamma (s)^{-1} = \sum_{k \in \N} \frac {k}{e^{l(\gamma) (s+k)}-1}. \end{equation} 
Hence, 
$$\pa_\mu \log Z_\gamma(s) = \sum_{\gamma \in \prim} \frac{\pa_\mu l(\gamma)}{l(\gamma)} \left( s \frac{d}{ds} \log Z_\gamma(s) + \frac{d}{ds}  \log z_\gamma (s)^{-1} \right) $$
$$= \sum_{\gamma \in \prim} \pa_\mu (\log l(\gamma)) \left( s \frac{d}{ds} \log Z_\gamma(s) + \frac{d}{ds}  \log z_\gamma (s)^{-1} \right).$$
\end{proof}

\section{Second variation} 

We prove here the second variational formula for the log of the Selberg zeta function as well as first and second variational formulas for:  the Ruelle zeta function, the zeta-regularized determinant of the Laplacian, the square of the Hilbert-Schmidt norm of the resolvent, and the higher zeta functions, 

\subsection{Second variation of Selberg zeta function} 
In preparation for the proof, we note first the following differentiation formulas
$$\frac{d^2}{ds^2}\log Z_{\gamma}(s) = - \ell(\gamma)^2 \sum_{k=0}^\infty \frac{e^{ \ell(\gamma) (s+k)}}
{(e^{\ell(\gamma)(s+k)}-1)^2},
$$

$$
\frac{d^2}{ds^2}
\log z_\gamma (s)^{-1}
=
- \ell(\gamma)^2 \sum_{k=0}^\infty
\frac{k
e^{\ell(\gamma)(s+k)}
 }
{(e^{\ell(\gamma)(s+k)}-1)^2
},
$$
$$
\frac{d^2}{ds^2}
\log \tilde z_\gamma (s)^{-1}
=
- \ell(\gamma)^2 \sum_{k=0}^\infty
\frac{
k^2 
e^{\ell(\gamma)(s+k)}
}
{
(e^{\ell(\gamma)(s+k)} -1)^2
}.
$$

Hence, we see that 
\begin{multline} \label{secondvarwarmup} s^2 \frac{d^2}{ds^2} \log Z_\gamma(s) + 2s \frac{d^2}{ds^2} \log z_\gamma(s) + \frac{d^2}{ds^2} \log \tilde z_\gamma(s) \\ = - \ell(\gamma)^2 \sum_{k \in \N} \frac{(s+k)^2 e^{\ell(\gamma)(s+k)}}{(e^{\ell(\gamma)(s+k)} -1)^2}. \end{multline} 
 
\begin{proof}[Proof of Theorem \ref{second_variation_geod_sum}] 
We perform the differentiation $\bar\partial_\mu$ on the result obtained in Proposition \ref{firstvarlogszf}, 
$$\bar\partial_\mu\partial_\mu \log Z(s) = \bar \partial_\mu \left( \sum_{\gamma \in \prim} \partial_\mu \log \ell(\gamma)\(s\frac {d}{ds}\log Z_{\gamma}(s) +\frac {d}{ds}\log z_{\gamma}(s)^{-1}\) \right).$$
By Leibniz's rule, this is 
$$\sum_{\gamma\in \prim} \bar \pa_\mu \pa_\mu \log \ell(\gamma) \left( s\frac {d}{ds}\log Z_{\gamma}(s) +\frac {d}{ds}\log z_{\gamma}(s)^{-1}) \right) $$
$$+ \sum_{\gamma \in \prim} \pa_\mu \log \ell(\gamma) \bar \pa_\mu \left(  s \frac{d}{ds} \log Z_\gamma (s) + \frac{d}{ds} \log z_\gamma(s)^{-1} \right)$$
$$= \sum_{\gamma \in \prim} \bar\partial_\mu\partial_\mu \log \ell(\gamma) A_\gamma(s) + \sum_{\gamma \in \prim} \pa_\mu \log \ell(\gamma) \bar \pa_\mu \left(  s \frac{d}{ds} \log Z_\gamma (s) + \frac{d}{ds} \log z_\gamma(s)^{-1} \right).$$
Thus, it only remains to consider the second term.  For this, we compute 
$$\bar \pa_\mu \left(  s \frac{d}{ds} \log Z_\gamma (s) + \frac{d}{ds} \log z_\gamma(s)^{-1} \right) = \bar \pa_\mu A_\gamma(s)$$
$$= \bar \pa_\mu \sum_{k \in \N} \frac{(s+k)
  \ell(\gamma)}{e^{\ell(\gamma)(s+k)} -1} = \sum_{k \in \N} 
\left( \frac{\bar \pa_\mu \ell(\gamma) (s+k)}{e^{\ell(\gamma)(s+k)} -1} - \frac{ \ell(\gamma) (s+k)^2 \bar \pa_\mu \ell(\gamma) e^{\ell(\gamma) (s+k)}}{(e^{\ell(\gamma) (s+k)} -1)^2}\right).$$
Thus, we have for this second term, 
$$ \bar \pa_\mu A_\gamma(s) = \bar \pa_\mu \ell(\gamma) \sum_{k \in \N} \frac{s+k}{e^{\ell(\gamma)(s+k)} -1} -  \frac{ \ell(\gamma) (s+k)^2 e^{\ell(\gamma)(s+k)}}{(e^{\ell(\gamma)(s+k)} -1)^2}.$$
Hence the expression becomes 
$$\bar\partial_\mu\partial_\mu \log Z(s) = \sum_{\gamma \in \prim} \bar\partial_\mu\partial_\mu \log \ell(\gamma) A_\gamma(s) $$
$$+ \sum_{\gamma \in \prim} \pa_\mu \log \ell(\gamma) \bar \pa_\mu \ell(\gamma) \sum_{k \in \N} \frac{s+k}{e^{\ell(\gamma)(s+k)} -1} -  \frac{ \ell(\gamma) (s+k)^2 e^{\ell(\gamma)(s+k)}}{(e^{\ell(\gamma)(s+k)} -1)^2}.$$
The second term above is
$$\sum_{\gamma \in \prim} \pa_\mu \log \ell(\gamma) \bar \pa_\mu
\ell(\gamma) \sum_{k \in \N} \left(
\frac{s+k}{e^{\ell(\gamma)(s+k)}
    -1} -  \frac{ \ell(\gamma)(s+k)^2
    e^{\ell(\gamma)(s+k)}}{(e^{\ell(\gamma)(s+k)} -1)^2}\right)
$$
$$= \sum_{\gamma \in \prim} |\pa_\mu \log \ell(\gamma)|^2 \sum_{k \in
  \N} \left(\frac{(s+k) \ell(\gamma)}{e^{\ell(\gamma)(s+k)} -1} -
  \frac{ \ell(\gamma)^2 (s+k)^2
    e^{\ell(\gamma)(s+k)}}{(e^{\ell(\gamma)(s+k)} -1)^2}
\right)
$$
$$= \sum_{\gamma \in \prim} |\pa_\mu \log \ell(\gamma)|^2 A_\gamma(s)
-  \sum_{\gamma \in \prim} |\pa_\mu \log \ell(\gamma)|^2
\ell(\gamma)^2 \sum_{k \in \N}  
\left(\frac{ (s+k)^2
  e^{\ell(\gamma)(s+k)}}{(e^{\ell(\gamma)(s+k)} -1)^2}\right),$$
where we have used the simple fact 
$\pa_\mu \log \ell(\gamma) \bar \pa_\mu \ell(\gamma) = |\pa_\mu \log \ell(\gamma)|^2 \ell(\gamma).$
By our preliminary calculation (\ref{secondvarwarmup}), 
$$- \sum_{\gamma \in \prim} |\pa_\mu \log \ell(\gamma)|^2
\ell(\gamma)^2 \sum_{k \in \N}  \frac{(s+k)^2
  e^{\ell(\gamma)(s+k)}}{(e^{\ell(\gamma)(s+k)} -1)^2} 
$$ 
$$ =  \sum_{\gamma \in \prim}  |\pa_\mu \log \ell(\gamma)|^2 \left( s^2 \frac{d^2}{ds^2} \log Z_\gamma(s) + 2ks \frac{d^2}{ds^2} \log z_\gamma(s) + \frac{d^2}{ds^2} \log \tilde z_\gamma(s)\right)$$
$$= \sum_{\gamma \in \prim}  |\pa_\mu \log \ell(\gamma)|^2 B_\gamma (s).$$
Thus, the total expression is 
$$ \bar\partial_\mu\partial_\mu \log Z(s) = \sum_{\gamma \in \prim} \bar\partial_\mu\partial_\mu \log \ell(\gamma) A_\gamma(s) + |\pa_\mu \log \ell(\gamma)|^2 (A_\gamma(s) + B_\gamma(s)).$$
Finally, we note that convergence follows from the estimates in Corollary \ref{cor2} and Lemma \ref{convergence_lemma}.
\end{proof}
\subsection{Applications to further variational formulas} \label{randomvars} 
The following formula for $\tr (\Delta +s(s-1))^{-2}$ may be proven as a consequence of the Selberg trace formula
for the divided resolvents \cite{hejhal2006selberg}, \cite[p. 118]{Sarnak-cmp-87}.  We have a somewhat different proof which may be of independent interest; this proof comprises \S \ref{appendixa}.  

\subsubsection{Variation of the resolvent} 
\begin{lemma} \label{le:hsnorm} 
Let $\Re s>1$. The squared resolvent 
$(\Delta_0 +s(s-1))^{-2}$  is of trace class, and the Hilbert-Schmidt norm of the resolvent is given by its trace, 
\begin{equation*}\begin{gathered} ||(\Delta_0 + s(s-1))^{-1}||^2 _{HS} = \tr (\Delta_0 +s(s-1))^{-2} = \sum_{k \in \N} \frac{1}{(\lambda_k + s(s-1))^2} 
\\ = \frac{(g-1)\pi^2}{\alpha} (1+\tan^2(\pi \alpha)) +  \frac{(g-1)}{4 \alpha s^2} - \frac{(g-1)s}{\alpha} \sum_{n \geq 1} \frac{ n}{(n^2 - s^2)^2} - L^2 \log Z(s),   \end{gathered} \end{equation*}
where above $\alpha = s-1/2$, and the operator $L := \frac{1}{2s-1} \frac{d}{ds}$. 
\end{lemma}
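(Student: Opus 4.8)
The plan is to derive the squared-resolvent trace from the Selberg trace formula applied to a suitable test function, rather than through the heat-kernel approach, exploiting the fact that $\tr(\Delta_0+s(s-1))^{-2}$ is (up to the operator $L$) a derivative in $s$ of $\tr(\Delta_0+s(s-1))^{-1}$. Concretely, since $\frac{d}{ds}\,s(s-1) = 2s-1$, we have $\frac{d}{ds}(\Delta_0+s(s-1))^{-1} = -(2s-1)(\Delta_0+s(s-1))^{-2}$, so $(\Delta_0+s(s-1))^{-2} = -L\,(\Delta_0+s(s-1))^{-1}$ with $L = \frac{1}{2s-1}\frac{d}{ds}$ as in the statement. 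The resolvent $(\Delta_0+s(s-1))^{-1}$ is itself not trace class, but the \emph{divided} (i.e.\ regularized) resolvent is, and the Selberg trace formula for it is classical; this is exactly the content referenced in \cite{hejhal2006selberg} and \cite[p.~118]{Sarnak-cmp-87}. First I would write down that trace formula: the spectral side is $\sum_k \bigl((\lambda_k + s(s-1))^{-1} - (\lambda_k + a(a-1))^{-1}\bigr)$ for a reference parameter, the identity-contribution side is an explicit elementary function of $s$ involving $\alpha = s-1/2$ (coming from the Plancherel density $\frac{1}{4\pi}\,\tanh(\pi r)\,r$ integrated against the resolvent kernel on $\HH$, which after the area factor $4\pi(g-1)$ produces the $\tan(\pi\alpha)$ and $\psi$-function type terms), and the hyperbolic-conjugacy-class side is precisely $-\frac{1}{2s-1}\frac{Z'(s)}{Z(s)} = -L\log Z(s)$ up to the normalization.

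Second, I would apply $-L = -\frac{1}{2s-1}\frac{d}{ds}$ term-by-term. On the spectral side this kills the reference term and leaves $\sum_k (\lambda_k+s(s-1))^{-2} = \tr(\Delta_0+s(s-1))^{-2}$; absolute convergence and hence trace-class-ness follows from Weyl's law ($\lambda_k \sim c k$, so $\sum (\lambda_k+s(s-1))^{-2}$ converges). On the geometric side, $-L(-L\log Z(s)) = -L^2\log Z(s)$, which is the $Z(s)$-term appearing in the lemma. On the identity side, I would differentiate the explicit elementary function of $\alpha$. The term $\frac{\pi^2}{\alpha}(1+\tan^2(\pi\alpha))$ is exactly $-\frac{1}{2s-1}\frac{d}{ds}\bigl(\pi\tan(\pi\alpha)\bigr)$ up to constants after multiplying by the area $4\pi(g-1)$ and bookkeeping factors — note $\frac{d}{d\alpha}\tan(\pi\alpha) = \pi(1+\tan^2(\pi\alpha))$ and $\frac{d\alpha}{ds}=1$, so the $\frac{1}{2s-1}=\frac{1}{2\alpha}$ is absorbed into the constant, leaving the stated shape. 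The remaining contribution from the identity side is a digamma-type sum; applying $-L$ to something of the form $\mathrm{const}\cdot\psi(s)$ or $\sum_n \frac{1}{(n+s)(\cdots)}$ produces $\frac{1}{4\alpha s^2}$ and $-\frac{s}{\alpha}\sum_{n\ge 1}\frac{n}{(n^2-s^2)^2}$ after partial-fractions; this is just the identity $\sum_{n\ge 1}\bigl(\frac{1}{(n-s)^2}+\frac{1}{(n+s)^2}\bigr) = $ (derivative of a $\cot$-type sum) rearranged, together with the isolated $n=0$-type term giving the $\frac{1}{s^2}$ piece.

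Third, I would assemble the pieces, match the constant $(g-1)$ (the area is $4\pi(g-1)$ and the various $2\pi$, $\frac14$ factors from the Plancherel measure and from $L$ combine to exactly $g-1$), and verify the two displayed expressions in the lemma agree: the second form is just the first with the sign distributed over the bracket, i.e.\ $-2\bigl(-\frac{1}{4\alpha s^2}+\frac{s}{\alpha}\sum \frac{n}{(n^2-s^2)^2}\bigr) = \frac{1}{2\alpha s^2} - \frac{2s}{\alpha}\sum\frac{n}{(n^2-s^2)^2}$, and absorbing the leftover $(g-1)$ and the factor from the first big bracket reconciles the coefficients $\frac{1}{4\alpha s^2}$ versus $\frac{1}{2\alpha s^2}$ — so I would be careful here to state the algebra that makes these consistent (most likely the first bracket has an outer factor I am conflating, and the honest statement is that line two is the simplification of line one, which I would check directly).

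The main obstacle I anticipate is not conceptual but bookkeeping: getting every normalization constant correct — the choice of Laplacian normalization ($\Delta_0$ versus $\frac14\Delta_0$, flagged explicitly in the Preliminaries), the Plancherel density, the factor $\frac{1}{2s-1}$ hidden in $L$, the area $4\pi(g-1)$, and the fact that the ``divided resolvent'' requires subtracting a reference term whose derivative must genuinely vanish under $L$. A secondary subtlety is justifying that one may differentiate the trace formula term-by-term in $s$ (uniform convergence on compact subsets of $\Re s>1$, which follows from Weyl's law on the spectral side and from the convergence already established in Lemma~\ref{convergence_lemma}-type estimates on the geometric side). Because the paper offers an alternative self-contained computation in \S\ref{appendixa}, I would present this trace-formula derivation as the conceptual route and defer the constant-chasing to that appendix, where the Hilbert--Schmidt norm of the resolvent kernel is computed directly and then cross-checked against this formula.
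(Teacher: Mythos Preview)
Your approach is essentially the same as the paper's: start from the Selberg trace formula for the divided resolvent $R(s,s')=\sum_n\bigl((r_n^2+\alpha^2)^{-1}-(r_n^2+\beta^2)^{-1}\bigr)$, then differentiate (the paper divides by $s-s'$ and sends $s'\to s$, which is exactly your application of $-L$). The one place the paper does real work that you gloss over is the identity contribution: rather than quoting a closed-form $\tan(\pi\alpha)$/digamma expression and differentiating it, the paper keeps the identity term as the integral $\int_{\R}\frac{r\tanh(\pi r)}{(r^2+\alpha^2)^2}\,dr$ and evaluates it directly by residues in the upper half-plane (simple poles of $\tanh$ at $i(n+\tfrac12)$ plus the double pole at $i\alpha$), then rewrites the resulting sum $\sum_{n\ge 0}\frac{n+1/2}{(\alpha^2-(n+1/2)^2)^2}$ via partial fractions to reach the stated $\frac{1}{4\alpha s^2}$ and $\frac{s}{\alpha}\sum_{n\ge 1}\frac{n}{(n^2-s^2)^2}$ form. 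You are also right to flag the coefficient discrepancy between the two displayed lines in the lemma: expanding the first line gives $\frac{(g-1)}{2\alpha s^2}-\frac{2(g-1)s}{\alpha}\sum\cdots$, which is indeed what the appendix computation produces, so the second displayed line in the statement carries a typo.
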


Lemma \ref{le:hsnorm} and Lemma \ref{convergence_lemma} immediately imply 
\begin{cor}
For $\Re(s) > 1$, the first and second variations of the square of the Hilbert-Schmidt norm of the resolvent are respectively 
$$\pa_\mu || (\Delta_0 + s(s-1))^{-1} ||_{HS} ^2 = - L^2 \pa_\mu \log Z(s),$$
$$\bar \pa_\mu \pa_\mu  || (\Delta_0 + s(s-1))^{-1} ||_{HS} ^2 = - L^2 \bar \pa_\mu \pa_\mu \log Z(s).$$
\end{cor} 

\subsubsection{Variation of the Ruelle zeta function} 	
\begin{prop}\label{Ruelle_variation} 
For $\Re(s) >1$, we have the first and second variations of the Ruelle zeta function 
$$
\partial_\mu \log R(s) 
= 
\sum_{\gamma \in \prim} \partial_\mu( l(\gamma))  
s e^{-s l(\gamma)} 
\( 1 - e^{-s l(\gamma)}   \)^{-1}
=\sum_{\gamma \in \prim} \partial_\mu( l(\gamma))  
s \( e^{s l(\gamma)} -1  \)^{-1}
$$
$$
\partial_\mu \bar\partial_\mu 
\log R(s) 
= 
\sum_{\gamma \in \prim}
\bar\partial_\mu \partial_\mu( l(\gamma))  
s\( 1 - e^{-s l(\gamma)}   \)^{-1} 
-\sum_{\gamma \in \prim} s^2
| \partial_\mu( l(\gamma))  |^2 e^{s l(\gamma)} 
\(  e^{s l(\gamma)} 
-1   \)^{-2} 
$$
\end{prop}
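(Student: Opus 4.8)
The plan is to mimic, almost verbatim, the strategy used for $Z(s)$ in Proposition~\ref{firstvarlogszf} and Theorem~\ref{second_variation_geod_sum}, since the Ruelle zeta function $R(s) = \prod_{\gamma \in \prim}(1 - e^{-sl(\gamma)})$ is structurally simpler than $Z(s)$: it is a single product over primitive classes rather than a double product over classes and $k \in \N$. First I would differentiate $\log R(s) = \sum_{\gamma \in \prim} \log(1 - e^{-sl(\gamma)})$ term by term with respect to $\mu$, obtaining
\[
\partial_\mu \log R(s) = \sum_{\gamma \in \prim} \frac{\partial_\mu(l(\gamma))\, s\, e^{-sl(\gamma)}}{1 - e^{-sl(\gamma)}} = \sum_{\gamma \in \prim} \frac{\partial_\mu(l(\gamma))\, s}{e^{sl(\gamma)} - 1},
\]
which is exactly the claimed first variation. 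The justification that term-by-term differentiation is legitimate — i.e.\ that the differentiated series converges absolutely and uniformly on compact subsets of $\Re(s) > 1$ — follows from Lemma~\ref{convergence_lemma} with $n = 0$ (the relevant sum there, with the $k=0$ term, dominates $\sum_\gamma |\partial_\mu l(\gamma)|\, s/(e^{sl(\gamma)}-1)$ after absorbing the bounded factor $s$ on compacts), together with the bound $|\partial_\mu l(\gamma)| \le \tfrac12\|\mu\|_\infty l(\gamma)$ from Corollary~\ref{cor2}.

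For the second variation I would apply $\bar\partial_\mu$ to the first-variation formula and use Leibniz's rule. Writing $g_\gamma(s) := s(e^{sl(\gamma)}-1)^{-1}$, we get
\[
\bar\partial_\mu\partial_\mu \log R(s) = \sum_{\gamma \in \prim} \bar\partial_\mu\partial_\mu(l(\gamma))\, g_\gamma(s) + \sum_{\gamma \in \prim} \partial_\mu(l(\gamma))\, \bar\partial_\mu\big(g_\gamma(s)\big).
\]
The inner derivative is computed by the chain rule: $\bar\partial_\mu\big(s(e^{sl(\gamma)}-1)^{-1}\big) = -s^2\, e^{sl(\gamma)}\, \bar\partial_\mu(l(\gamma))\, (e^{sl(\gamma)}-1)^{-2}$. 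Substituting this in and using $\partial_\mu(l(\gamma))\,\bar\partial_\mu(l(\gamma)) = |\partial_\mu(l(\gamma))|^2$ yields
\[
\bar\partial_\mu\partial_\mu \log R(s) = \sum_{\gamma \in \prim} \bar\partial_\mu\partial_\mu(l(\gamma))\, \frac{s}{e^{sl(\gamma)}-1} - \sum_{\gamma \in \prim} s^2 |\partial_\mu(l(\gamma))|^2\, \frac{e^{sl(\gamma)}}{(e^{sl(\gamma)}-1)^2},
\]
which, after rewriting $s/(e^{sl(\gamma)}-1) = s(1-e^{-sl(\gamma)})^{-1}e^{-sl(\gamma)}$ — or simply $s(1-e^{-sl(\gamma)})^{-1}$ up to the convention in the statement — is the asserted formula. (I would double-check the precise placement of the $e^{-sl(\gamma)}$ factor against the statement as written; the two forms $s(e^{sl(\gamma)}-1)^{-1}$ and $s e^{-sl(\gamma)}(1-e^{-sl(\gamma)})^{-1}$ are literally equal, so this is cosmetic.)

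The only genuine point requiring care — and the main obstacle, such as it is — is the convergence bookkeeping for the second-variation series. The first sum, $\sum_\gamma \bar\partial_\mu\partial_\mu(l(\gamma))\, s(e^{sl(\gamma)}-1)^{-1}$, converges by Corollary~\ref{cor2} (which gives $|\bar\partial_\mu\partial_\mu l(\gamma)| \le C\|\mu\|_\infty^2 l(\gamma)$) combined with Lemma~\ref{convergence_lemma} (the $n=0$ case again, now applied to the second-variation Beltrami data). The second sum, $\sum_\gamma s^2|\partial_\mu l(\gamma)|^2 e^{sl(\gamma)}(e^{sl(\gamma)}-1)^{-2}$, is even more strongly convergent: using $|\partial_\mu l(\gamma)|^2 \le \tfrac14\|\mu\|_\infty^2 l(\gamma)^2$ and the elementary bound $e^{sl(\gamma)}(e^{sl(\gamma)}-1)^{-2} \le C_s e^{-\Re(s)l(\gamma)}$ for $l(\gamma)$ bounded below (as in the estimates $(\ref{conv_est2})$–$(\ref{conv_est4})$), the tail is dominated by $C_s\|\mu\|_\infty^2 \sum_\gamma l(\gamma)^2 e^{-\Re(s)l(\gamma)}$, which converges for $\Re(s) > 1$ by the prime geodesic growth estimate $\sum_\gamma e^{-l(\gamma)s_1} < \infty$ for $\Re(s_1) > 1$ cited from \cite{borthwick2007spectral}. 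With these two convergence facts in hand, all the interchanges of differentiation and summation used above are justified, completing the proof. Alternatively, one could deduce the result directly from Theorem~\ref{second_variation_geod_sum} by noting that $\log R(s) = \log Z(s) - \log Z(s+1)$ and that the local factors telescope, but carrying out the bookkeeping afresh as above is cleaner and self-contained.
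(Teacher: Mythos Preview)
Your proof is correct and follows exactly the methodology the paper uses for $Z(s)$; indeed, the paper states this proposition without proof, it being an immediate specialization of the computation in Proposition~\ref{firstvarlogszf} and Theorem~\ref{second_variation_geod_sum} to the $k=0$ term (equivalently, $\log R(s)=\log Z(s)-\log Z(s+1)$, as you note at the end). You are also right to flag the factor in the first sum of the second-variation formula: the computation gives $s\,(e^{sl(\gamma)}-1)^{-1}=s\,e^{-sl(\gamma)}(1-e^{-sl(\gamma)})^{-1}$, so the statement as printed appears to be missing an $e^{-sl(\gamma)}$ there.
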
 
\subsubsection{Variation of the determinant and torsion} 
\begin{prop} \label{prop:zdet} The first and second variations of the log of the determinant, 
$\log \det(\Delta_0 +s(s-1))$, are equal to the corresponding variations of $\log Z(s)$, that is, for $\Re(s) >1$,
$$\pa_\mu \log \det(\Delta_0 + s(s-1)) = \pa_\mu \log Z(s),$$
$$\bar\pa_\mu \pa_\mu \log \det(\Delta_0 + s(s-1)) = \bar\pa_\mu \pa_\mu \log Z(s).$$
For the case $s=1$, we have 
$$\pa_\mu \log \det(\Delta_0) = \pa_\mu \log Z'(1),$$
$$\bar\pa_\mu \pa_\mu \log \det(\Delta_0) = \bar\pa_\mu \pa_\mu \log Z'(1).$$
\end{prop}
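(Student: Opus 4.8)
The plan is to exploit Sarnak's identity \eqref{sarnakzdet}, which expresses $\det(\Delta_0 + s(s-1))$ as $Z(s)$ multiplied by a universal factor that depends only on $s$ and on the genus $g$, namely
$$
\det(\Delta_0 + s(s-1)) = Z(s)\, \left( e^{E - s(s-1)} \frac{\Gamma_2(s)^2}{\Gamma(s)} (2\pi)^s \right)^{2g-2}, \qquad \Re(s) > 1.
$$
Taking logarithms, $\log\det(\Delta_0 + s(s-1)) = \log Z(s) + (2g-2)\, g_0(s)$, where $g_0(s) = E - s(s-1) + 2\log\Gamma_2(s) - \log\Gamma(s) + s\log(2\pi)$ depends only on $s$. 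The crucial observation is that the model surface $S$ and its genus $g$ are fixed along any curve in Teichm\"uller space, and the correction factor carries no dependence on the point $t \in \cT$ beyond $g$ and $s$. Therefore the function $(2g-2)\, g_0(s)$ is constant as a function of $t$ for each fixed $s$, so $\partial_\mu$ and $\bar\partial_\mu\partial_\mu$ annihilate it. This yields $\partial_\mu \log\det(\Delta_0 + s(s-1)) = \partial_\mu \log Z(s)$ and $\bar\partial_\mu\partial_\mu \log\det(\Delta_0 + s(s-1)) = \bar\partial_\mu\partial_\mu \log Z(s)$ for $\Re(s) > 1$, and the right-hand sides are the convergent expressions supplied by Proposition \ref{firstvarlogszf} and Theorem \ref{second_variation_geod_sum}.

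For the case $s = 1$, I would instead invoke \eqref{sarnakzdet0}, which reads $\det(\Delta_0) = Z'(1)\, e^{(2g-2)(-1/12 - 2\log A + (\log 2\pi)/2)}$. Here $Z'(1)$ replaces $Z(1)$ because $s = 1$ corresponds to the eigenvalue $\lambda_0 = 0$ of $\Delta_0$, which is excluded from the spectral zeta function, and $Z(s)$ has a simple zero at $s = 1$ (coming from the trivial zero associated with the constant eigenfunction), so the regularization produces $Z'(1)$. Again the exponential factor depends only on $g$, hence is constant along $\cT$, so applying $\partial_\mu$ and $\bar\partial_\mu\partial_\mu$ gives $\partial_\mu \log\det(\Delta_0) = \partial_\mu \log Z'(1)$ and $\bar\partial_\mu\partial_\mu \log\det(\Delta_0) = \bar\partial_\mu\partial_\mu \log Z'(1)$.

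The only genuine point requiring care—the "main obstacle," though it is mild—is justifying that differentiation along $\cT$ commutes with the identity \eqref{sarnakzdet}; that is, that both sides are genuinely smooth functions of $t$ and that the Sarnak relation holds identically in $t$, not merely pointwise for one surface. This is immediate since \eqref{sarnakzdet} is proved for every compact hyperbolic surface with the stated universal constant, and both $\log Z(s)$ (by Proposition \ref{firstvarlogszf} and its convergence, Lemma \ref{convergence_lemma}) and $\log\det(\Delta_0 + s(s-1))$ (by standard smoothness of zeta-regularized determinants in holomorphic families, cf.\ \cite{Ax-Sch-2012}) depend smoothly on $t$. One should also remark that although $Z(s)$ itself may vanish at isolated points of $\cT$ for fixed $s$ in the range $\Re(s)>1$—in fact it does not, since for $\Re(s) > 1$ the Euler product \eqref{SelbergZeta} converges to a nonzero value—$\log Z(s)$ is well-defined and smooth on all of $\cT$ for such $s$, so no branch-cut issues arise. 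For $s = 1$ one works with $\log Z'(1)$, and $Z'(1) \neq 0$ since $s=1$ is a \emph{simple} zero of $Z$, again uniformly over $\cT$.
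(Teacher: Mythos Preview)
Your proof is correct and follows exactly the paper's approach: the paper's entire proof reads ``The proof follows immediately from (\ref{sarnakzdet}),'' and you have simply spelled out that the multiplicative factor in Sarnak's identity depends only on $s$ and the genus $g$, hence is annihilated by $\partial_\mu$ and $\bar\partial_\mu\partial_\mu$. Your additional remarks on smoothness and nonvanishing are sound but go beyond what the paper records.
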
 
\begin{proof} The proof follows immediately from (\ref{sarnakzdet}).
\end{proof} 

As a corollary, we obtain the variation of the holomorphic analytic torsion.
\begin{cor} \label{cor:ator} 
The first and second variations of the logarithm of the holomorphic analytic torsion, $T_0 (X)$, defined in (\ref{torsions}), are respectively
$$\pa_\mu \log T_0(X)= \frac{1}{2} \pa_\mu \log Z'(1),$$
$$\bar\pa_\mu \pa_\mu \log \det(\Delta_0) = \frac{1}{2} \bar\pa_\mu \pa_\mu \log Z'(1).$$
\end{cor}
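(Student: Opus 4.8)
The final statement, Corollary \ref{cor:ator}, is a formal consequence of the relationship (\ref{atordet}) between the holomorphic analytic torsion and the zeta-regularized determinant of the Laplacian, combined with Proposition \ref{prop:zdet}. The plan is to take logarithms and differentiate.

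First I would recall from (\ref{torsions}) and (\ref{atordet}) that $T_0(X) = \sqrt{\det(\Delta_0)}$, so that $\log T_0(X) = \frac{1}{2} \log \det(\Delta_0)$, as a function on Teichm\"uller space $\cT$; here $\det(\Delta_0)$ denotes the zeta-regularized determinant of the scalar Laplacian, with the zero eigenvalue omitted, which is the $s=1$ case treated in Proposition \ref{prop:zdet}. Since the identity $\log T_0(X) = \frac{1}{2}\log\det(\Delta_0)$ holds pointwise on $\cT$, we may apply the differential operators $\pa_\mu$ and $\bar\pa_\mu \pa_\mu$ to both sides, obtaining
$$\pa_\mu \log T_0(X) = \tfrac{1}{2}\pa_\mu \log\det(\Delta_0), \qquad \bar\pa_\mu\pa_\mu \log T_0(X) = \tfrac{1}{2}\bar\pa_\mu\pa_\mu \log\det(\Delta_0).$$
Then I would invoke the $s=1$ part of Proposition \ref{prop:zdet}, namely $\pa_\mu \log\det(\Delta_0) = \pa_\mu \log Z'(1)$ and $\bar\pa_\mu\pa_\mu \log\det(\Delta_0) = \bar\pa_\mu\pa_\mu \log Z'(1)$, and substitute to conclude.

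There is essentially no obstacle here; the only point requiring a word of care is that $\det(\Delta_0)$ in (\ref{atordet}) is the determinant with the zero eigenvalue removed, which is exactly how the $s=1$ case is set up in Section \ref{s-zdet} and Proposition \ref{prop:zdet} (the relevant formula being (\ref{sarnakzdet0})), so the two conventions match and no extra constant or correction term appears. One should also note that the differentiation of an identity of smooth functions on $\cT$ commutes with the purely formal operators $\pa_\mu$, $\bar\pa_\mu\pa_\mu$, which is immediate. I expect the whole proof to be two lines, consisting of the substitution just described.

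A minor caveat for the write-up: the second displayed equation in the statement of Corollary \ref{cor:ator} as currently typeset has $\det(\Delta_0)$ on the left-hand side where it should read $T_0(X)$; in the proof I would simply establish $\bar\pa_\mu\pa_\mu \log T_0(X) = \tfrac{1}{2}\bar\pa_\mu\pa_\mu \log Z'(1)$, which also yields the stated identity $\bar\pa_\mu\pa_\mu \log\det(\Delta_0) = \tfrac12 \cdot 2 \cdot \bar\pa_\mu\pa_\mu\log Z'(1) = \bar\pa_\mu\pa_\mu\log Z'(1)$ trivially from Proposition \ref{prop:zdet}. Either reading follows at once from the two inputs above.
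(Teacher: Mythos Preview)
Your proposal is correct and matches the paper's intended argument: the paper gives no explicit proof for this corollary, treating it as an immediate consequence of (\ref{atordet}) and Proposition~\ref{prop:zdet}, exactly as you outline. Your observation that the second displayed formula in the statement should read $T_0(X)$ rather than $\det(\Delta_0)$ is also correct; as literally printed (with $\det(\Delta_0)$ and the factor $\tfrac12$) the equation contradicts Proposition~\ref{prop:zdet}, so this is a typo in the paper.
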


\subsubsection{Variation of the higher zeta functions} 
We conclude this section by computing the 
variation of the higher zeta functions.  
\begin{lemma}\label{higherzeta_variation}
For a fixed point $s \in \C$ with $\Re(s) > 1$, we have 
\begin{equation*}
\frac{\partial}{\partial \mu} \log z (s) = \sum_{\gamma \in \prim} \partial_\mu \log (l(\gamma)) \cdot \(   s \frac{d}{ds} \log z_\gamma(s)  -  \frac{d}{ds} \log \tilde{z}_\gamma (s)^{-1}    \),
\end{equation*}
where
\begin{equation*}
\tilde{z}_\gamma(s) = \prod_{m=1}^{\infty} (1 - e^{-l(\gamma) \cdot (s+m)})^{-m^2}.
\end{equation*}	
The sum in the right hand side of the equation above is absolutely convergent.
\end{lemma}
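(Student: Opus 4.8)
The plan is to imitate the proof of Proposition~\ref{firstvarlogszf}, differentiating the defining product \eqref{higherSelbergZeta} for $z(s,t)$ term-by-term, specialized to the value of $t$ for which $z(s,t) = z(s)$. Recall from \eqref{higherSelbergZeta} that $z_\gamma(s,t) = \prod_{k\ge 0}(1-e^{-l(\gamma)(s+k)})^{\binom{t+k-1}{k}}$; comparing with the definition \eqref{higherSelbergZeta} of $z(s)$, which carries exponents $-k$ over $k\ge 1$, one checks that $z(s) = z(s,t)$ for the appropriate $t$ — indeed $\binom{t+k-1}{k} = -k$ is solved by $t = -1$ after the standard extension of the binomial coefficient, so that the exponent vanishes at $k=0$ and equals $-k$ for $k \ge 1$. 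Rather than belabor this bookkeeping, I will simply work directly with the product $z(s) = \prod_{\gamma}\prod_{k\ge 1}(1-e^{-l(\gamma)(s+k)})^{-k}$ as in \eqref{localzeta}, since that is all that is needed.

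First I would take $\log$ and apply $\partial_\mu$ formally, obtaining
\[
\partial_\mu \log z(s) = \sum_{\gamma\in\prim}\sum_{k\ge 1} (-k)\,\partial_\mu\!\left(\log(1-e^{-l(\gamma)(s+k)})\right)
= \sum_{\gamma\in\prim}\partial_\mu(l(\gamma))\sum_{k\ge 1}\frac{-k(s+k)}{e^{l(\gamma)(s+k)}-1}.
\]
Next I would split $k(s+k) = sk + k^2$, which separates the inner sum into
$s\sum_{k\ge 1}\frac{k}{e^{l(\gamma)(s+k)}-1}$ and $\sum_{k\ge 1}\frac{k^2}{e^{l(\gamma)(s+k)}-1}$. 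By the computations preceding Proposition~\ref{firstvarlogszf}, together with the analogous differentiation of \eqref{localhighzeta}, one has $\frac{d}{ds}\log z_\gamma(s)^{-1} = \sum_{k\ge1}\frac{k}{e^{l(\gamma)(s+k)}-1}$ and $\frac{d}{ds}\log\tilde z_\gamma(s)^{-1} = \sum_{k\ge1}\frac{k^2}{e^{l(\gamma)(s+k)}-1}$. Using $\partial_\mu l(\gamma) = l(\gamma)\,\partial_\mu\log l(\gamma)$ and noting that multiplying by $l(\gamma)$ turns $\sum_k \frac{k}{e^{l(\gamma)(s+k)}-1}$ into $s\frac{d}{ds}\log z_\gamma(s) = -s\frac{d}{ds}\log z_\gamma(s)^{-1}$ (for the $sk$ piece) — wait, more carefully: $l(\gamma)\sum_k\frac{k}{e^{l(\gamma)(s+k)}-1}$ does not directly match; instead I package $\partial_\mu(l(\gamma))\cdot s\sum_k\frac{k}{e^{l(\gamma)(s+k)}-1} = s\,\partial_\mu\log l(\gamma)\cdot l(\gamma)\sum_k\frac{k}{e^{l(\gamma)(s+k)}-1}$, and since $\frac{d}{ds}\log z_\gamma(s) = -l(\gamma)^{-1}\cdot l(\gamma)\sum_k\frac{k}{e^{l(\gamma)(s+k)}-1}$... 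The cleanest route is to observe directly that $l(\gamma)^{-1}\frac{d}{ds}$ applied to $\log z_\gamma(s)$ produces the sums with the extra factor $l(\gamma)$ removed; I will phrase the identity in the form $\partial_\mu l(\gamma)\sum_k\frac{k}{e^{l(\gamma)(s+k)}-1} = \partial_\mu\log l(\gamma)\cdot\frac{d}{ds}\log z_\gamma(s)^{-1}$ after the dust settles, matching the stated formula with its overall minus sign distributed as $s\frac{d}{ds}\log z_\gamma(s) - \frac{d}{ds}\log\tilde z_\gamma(s)^{-1}$.

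The only nontrivial point is justifying that the formal manipulations are legitimate: that the double series converges absolutely and uniformly on compacts of $\{\Re s > 1\}$, so that term-by-term differentiation and rearrangement are valid. This is exactly what Lemma~\ref{convergence_lemma} provides, applied with $n=1$ and $n=2$ (for the $sk$ and $k^2$ contributions respectively), combined with Corollary~\ref{cor2}, which controls $|\partial_\mu l(\gamma)|$ by $C\|\mu\|_\infty l(\gamma)$. I expect this convergence bookkeeping to be the only real obstacle, and it is dispatched by direct citation; the algebraic identification of the sums with $\frac{d}{ds}\log z_\gamma(s)$ and $\frac{d}{ds}\log\tilde z_\gamma(s)^{-1}$ is routine once the splitting $k(s+k) = sk + k^2$ is made. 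I would close by noting that absolute convergence of the right-hand side is immediate from the same two applications of Lemma~\ref{convergence_lemma}.
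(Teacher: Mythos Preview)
Your approach is essentially the same as the paper's: differentiate the defining product for $z(s)$ term by term, split $k(s+k)=sk+k^2$, identify the two pieces with $s\frac{d}{ds}\log z_\gamma(s)$ and $\frac{d}{ds}\log\tilde z_\gamma(s)^{-1}$ respectively, and justify everything via Lemma~\ref{convergence_lemma} with $n=1,2$. The only comment is that the hesitation in your middle paragraph is unnecessary: once you write $\partial_\mu l(\gamma)=l(\gamma)\,\partial_\mu\log l(\gamma)$, the extra $l(\gamma)$ pairs directly with $\sum_k\frac{k}{e^{l(\gamma)(s+k)}-1}$ and $\sum_k\frac{k^2}{e^{l(\gamma)(s+k)}-1}$ to give exactly $-\frac{d}{ds}\log z_\gamma(s)$ and $\frac{d}{ds}\log\tilde z_\gamma(s)^{-1}$, so the identification is a one-line computation rather than something to be untangled ``after the dust settles.''
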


\begin{proof} 
First compute
\begin{equation*}
\begin{gathered}
\frac{\partial}{\partial \mu} (\log z_\gamma(s)) = \sum_{k \ge 0} \partial_\mu (- k \log(1- e^{-l(\gamma) \cdot (s+k)  })) = \\
\sum_{k \ge 0} -\partial_\mu(l(\gamma)) \cdot \frac{k(s+k) e^{-l(\gamma) \cdot (s+k)}}{1-e^{-l(\gamma)\cdot (s+k)}} = \sum_{k \ge 0} - \frac{\partial_\mu (l(\gamma)) \cdot k(s+k)}{e^{l(\gamma)\cdot (s+k)}-1} = \\ \sum_{k \ge 0} - \frac{\partial_\mu \(\log l(\gamma) \) \cdot l(\gamma) \cdot k(s+k)}{e^{l(\gamma)\cdot (s+k)}-1}.
\end{gathered}
\end{equation*}
Second,
\begin{equation*}
\begin{gathered}
\frac{d}{ds} \( \log z_\gamma (s)   \) = \frac{d}{ds} \( \sum_{k\ge 0}  -k \log\( 1 - e^{-l(\gamma) \cdot (s+k)}  \)   \) =  \sum_{k \ge 0}   \frac{- k \cdot l(\gamma)}{e^{l(\gamma)\cdot (s+k)}-1}.
\end{gathered}
\end{equation*}
Third,
\begin{equation*}
\begin{gathered}
\frac{d}{ds} (\log \tilde{z}_\gamma (s)^{-1}) = \frac{d}{ds} \( \sum_{k\ge 0}  k^2 \log\( 1 - e^{-l(\gamma) \cdot (s+k)}  \)   \) = \sum_{k \ge 0} \frac{k^2 \cdot l(\gamma)}{e^{l(\gamma)\cdot (s+k)}-1}.
\end{gathered}
\end{equation*}
To finish the lemma, we need to prove that we can change the order of differentiation with respect to $\mu$ (or $s$) and the summation over the set of primitive closed geodesics, that is 

\begin{equation}
\sum_{\gamma \in \prim} \partial_\mu (l(\gamma))   \sum_{k \ge 1} \frac{k^2 \cdot l(\gamma)}{e^{l(\gamma)\cdot (s+k)}-1} < \infty, \quad \sum_{\gamma \in \prim} \partial_\mu (l(\gamma))   \sum_{k \ge 1} \frac{k \cdot l(\gamma)}{e^{l(\gamma)\cdot (s+k)}-1} < \infty.
	\end{equation}
	\end{proof}

\begin{lemma}\label{hier_variation}
	For any fixed point $s \in \C$ with $\Re(s) > 1$ and $t > 1$, we have 
	\begin{equation*}
	\frac{\partial}{\partial \mu} \log z (s,t) = \sum_{\gamma \in \prim}  \( s \frac{d}{ds} \log z_\gamma (s,t) +  \sum_{j=1}^{t-1} \frac{d}{ds} \log z_\gamma (s,t-j) \),
	\end{equation*}
	The sum in the right hand side of the equation above is absolutely convergent in $\Re(s) > 1$.
\end{lemma}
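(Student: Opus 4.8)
The plan is to differentiate the defining product of $z(s,t)$ directly, exactly as was done for $Z(s)$ in Proposition \ref{firstvarlogszf} and for $z(s)$ in Lemma \ref{higherzeta_variation}. Fix an integer $t \geq 2$ (the sum $\sum_{j=1}^{t-1}$ forces $t$ to be an integer). Passing to logarithms,
\begin{equation*}
\log z(s,t) = \sum_{\gamma \in \prim} \sum_{k \geq 0} \binom{t+k-1}{k} \log\bigl(1 - e^{-l(\gamma)(s+k)}\bigr),
\end{equation*}
and, granting for the moment that $\partial_\mu$ may be carried inside both summations, the chain rule together with $\partial_\mu l(\gamma) = l(\gamma)\,\partial_\mu \log l(\gamma)$ gives
\begin{equation*}
\partial_\mu \log z(s,t) = \sum_{\gamma \in \prim} \partial_\mu \log l(\gamma)\, l(\gamma) \sum_{k \geq 0} \binom{t+k-1}{k} \frac{s+k}{e^{l(\gamma)(s+k)}-1};
\end{equation*}
as in Lemma \ref{higherzeta_variation}, each summand on the right of the asserted identity is to be read as carrying the factor $\partial_\mu \log l(\gamma)$. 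Differentiating the local factor in $s$ one finds, for every $t'$,
\begin{equation*}
\frac{d}{ds}\log z_\gamma(s,t') = l(\gamma) \sum_{k \geq 0} \binom{t'+k-1}{k}\, \bigl(e^{l(\gamma)(s+k)}-1\bigr)^{-1},
\end{equation*}
so the right-hand side of the lemma is again a weighted sum over closed geodesics with explicit binomial weights.

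The crux is then a purely combinatorial identity matching the weight $(s+k)\binom{t+k-1}{k}$ against a finite combination of the weights $\binom{t'+k-1}{k}$. I would split $(s+k) = s + k$: the ``$s$'' part reproduces $s\,\tfrac{d}{ds}\log z_\gamma(s,t)$ verbatim, and the ``$k$'' part I would handle via the generating function $\sum_{k \geq 0}\binom{t+k-1}{k} x^k = (1-x)^{-t}$. Applying $x\tfrac{d}{dx}$ and writing $x = 1 - (1-x)$ gives $\sum_{k \geq 0} k\binom{t+k-1}{k}x^k = t(1-x)^{-(t+1)} - t(1-x)^{-t}$, so comparing coefficients expresses $k\binom{t+k-1}{k}$ as a $\Z$-linear combination of the binomial weights attached to neighbouring members of the hierarchy; equivalently one may iterate Pascal's rule $\binom{t+k-1}{k} = \binom{t+k-2}{k} + \binom{t+k-2}{k-1}$, noting that the second summand is the weight of a local factor with $s$ shifted by one. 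Substituting this decomposition back in and regrouping then recognises every resulting geodesic sum as $\tfrac{d}{ds}\log z_\gamma(s,t')$ for the appropriate $t'$, and summing over $\gamma \in \prim$ yields the stated formula. The step I expect to be most delicate is precisely this bookkeeping: tracking which hierarchy members $z_\gamma(s,t')$ occur, with which coefficients, and confirming that only finitely many of them appear.

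It remains to justify the interchange of $\partial_\mu$, and likewise of $\tfrac{d}{ds}$, with the double sum, which is where Lemma \ref{convergence_lemma} enters. Since $\binom{t+k-1}{k}$ is a polynomial in $k$ of degree $t-1$, on any compact subset of $\{\Re(s) > 1\}$ one has $\bigl|(s+k)\binom{t+k-1}{k}\bigr| \leq C\,(k^{t-1} + k^{t})$ with $C$ uniform; together with $l(\gamma) \geq l_0 > 0$, the tail of the double sum over $\gamma \in \prim$ is therefore dominated by finitely many copies, with $n = t-1$ and $n = t$, of the series shown in Lemma \ref{convergence_lemma} to converge absolutely and uniformly on compacta of $\Re(s) > 1$ (the remaining $k = 0$ terms and the finitely many short geodesics being harmless). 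The same bound controls the series appearing on the right-hand side, so all the rearrangements above are legitimate and the argument is complete.
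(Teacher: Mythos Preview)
Your overall strategy coincides with the paper's: differentiate the product for $z(s,t)$ termwise, match the resulting weights $(s+k)\binom{t+k-1}{k}$ against the weights $\binom{t'+k-1}{k}$ coming from $\frac{d}{ds}\log z_\gamma(s,t')$, and justify all interchanges by Lemma~\ref{convergence_lemma} (since $\binom{t+k-1}{k}$ is a polynomial of degree $t-1$ in $k$). The paper's combinatorial step is more direct than yours: it writes
\[
(s+k)\binom{t+k-1}{k} \;=\; s\binom{t+k-1}{k} + \binom{t+k-1}{k+1}
\]
and then applies the hockey-stick (``Christmas stocking'') identity $\binom{t+k-1}{k+1}=\sum_{j=1}^{t-1}\binom{t+k-j-1}{k}$ to read off $\sum_{j=1}^{t-1}\frac{d}{ds}\log z_\gamma(s,t-j)$ in one stroke, with no generating functions or iteration of Pascal's rule.

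There is, however, a genuine problem on both sides. Your generating-function identity $k\binom{t+k-1}{k}=t\binom{t+k}{k}-t\binom{t+k-1}{k}$ is correct, but it produces $t\,\frac{d}{ds}\log z_\gamma(s,t+1)-t\,\frac{d}{ds}\log z_\gamma(s,t)$, a relation involving $z_\gamma(s,t+1)$ rather than the $z_\gamma(s,t-j)$ appearing in the statement; so your bookkeeping, as you anticipated, does not land on the asserted right-hand side. In fact the paper's displayed step $k\binom{t+k-1}{k}=\binom{t+k-1}{k+1}$ is itself false (e.g.\ $t=3$, $k=2$ gives $12\neq 4$), and correspondingly the lemma as printed --- which, as you also noticed, drops the factor $\partial_\mu\log l(\gamma)$ --- cannot hold verbatim. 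The method is sound; the specific binomial identity needs repair both in your proposal and in the paper.
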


\begin{proof}
	The proof follows the term-wise differentiation of $\frac{\partial}{\partial \mu} \log z (s,t)$. First we compute
	\begin{equation*}
	\begin{gathered}
	\frac{\partial}{\partial \mu} (\log z_\gamma(s)) = \sum_{k \ge 0} \partial_\mu \(\binom{t+k-1}{k} \log(1- e^{-l(\gamma) \cdot (s+k)  })\) = \\
	\sum_{k \ge 0} \partial_\mu(l(\gamma)) \cdot \frac{ \binom{t+k-1}{k} (s+k) e^{-l(\gamma) \cdot (s+k)}}{1-e^{-l(\gamma)\cdot (s+k)}} =  \sum_{k \ge 0}  \frac{\partial_\mu \(\log l(\gamma) \) \cdot l(\gamma) \cdot \binom{t+k-1}{k}(s+k)}{e^{l(\gamma)\cdot (s+k)}-1}.
	\end{gathered}
	\end{equation*}
    The differentiation is justified because $\binom{t+k-1}{k}(s+k)$ is a polynomial in $k$ and hence the right hand side of the the previous equation converges for $\Re(s) > 1$ by Lemma~\ref{convergence_lemma}.
	For $j \in \Z$, $0 \le  j < t$,
	\begin{equation*}
	\begin{gathered}
	\frac{d}{ds} (\log z_\gamma (s,t-j)) = \frac{d}{ds} \( \sum_{k\ge 0}  \binom{t+k-j-1}{k}  \log\( 1 - e^{-l(\gamma) \cdot (s+k)}  \)   \) = \\  \sum_{k \ge 0} \frac{ \binom{t+k-j-1}{k} \cdot l(\gamma)}{e^{l(\gamma)\cdot (s+k)}-1}, 
	\end{gathered}
	\end{equation*}
	and especially for $j = 0$,
	\begin{equation*}
	\begin{gathered}
	\frac{d}{ds} (\log z_\gamma (s,t)) =  \sum_{k \ge 0} \frac{ \binom{t+k-1}{k} \cdot l(\gamma)}{e^{l(\gamma)\cdot (s+k)}-1}.
	\end{gathered}
	\end{equation*}
As before, the differentiation is justified by Lemma \ref{convergence_lemma}. Note that 
		\begin{equation*}
		\binom{t+k-1}{k} (s+k) =  s \binom{t+k-1}{k} + \binom{t+k-1}{k+1}.     
		\end{equation*}
By the Christmas stocking identity, 
\begin{equation*}
 \binom{t+k-1}{k+1} = \sum_{j=1}^{t-1} \binom{t+k-j-1}{k},
\end{equation*}
\end{proof}

\begin{remark}
It is straightforward to repeat the calculations for the Selberg zeta function to compute the second variation of these zeta functions as well; this is left as an exercise for the reader.
\end{remark} 

\section{Asymptotic behavior as $\Re s \to \infty$}
We begin by estimating the terms $A_\gamma(s)$ and $B_\gamma(s)$ when $\Re s \to \infty$.  

\begin{prop}  \label{propagamma} 
We have for each $\gamma$ 
$$\lim_{\Re s \to \infty} \frac{A_\gamma(s) e^{sl(\gamma)} (1-e^{-l(\gamma)})}{s l(\gamma)} = 1.$$
\end{prop}

\begin{proof} 
We write 
$$A_\gamma(s) = \frac{ s l(\gamma)}{e^{s l(\gamma)}} \sum_{k \geq 0} \frac{ 1}{e^{k l(\gamma)} - e^{- s l(\gamma)}} + \frac{l(\gamma)}{e^{s l(\gamma)}} \sum_{k \geq 0} \frac{k}{e^{k l(\gamma)} - e^{- s l(\gamma)}}.$$
Hence, we define 
$$I_a (s) := \sum_{k \geq 0} \frac{ 1}{e^{k l(\gamma)} - e^{- s l(\gamma)}}, \quad II_a (s) =  \sum_{k \geq 0} \frac{k}{e^{k l(\gamma)} - e^{- s l(\gamma)}}.$$
Since we are interested in the behavior as $\Re s \to \infty$, we may assume that 
$$\Re s > \frac{\ln(2)}{l_0}.$$
Then we have 
$$0< | e^{-s l(\gamma)}|  < \frac{1}{2} \leq \frac{e^{k l(\gamma)}}{2} \quad \forall k \geq 0  \implies  \left|  \frac{ 1}{e^{k l(\gamma)} - e^{- s l(\gamma)}} \right| < 2 e^{- k l(\gamma)},$$
hence 
$$0< | I_a(s)|  \leq \sum_{k \geq 0} 2e^{-k l(\gamma)} = \frac{2}{1-e^{-l(\gamma)}}.$$
More generally, by the absolute convergence of the series, the function 
$$f(z) = \sum_{k \geq 0} \frac{1}{e^{k l(\gamma)} - z},$$
is continuous for $|z|<1$.  In particular, it is continuous at $z=0$.  Hence, using 
$$e^{-s l(\gamma)} = z,$$
we have
$$\lim_{z \to 0} f(z) = \lim_{\Re s \to \infty} I_a (s) = f(0) = \frac{1}{1-e^{-l(\gamma)}}.$$

Next, we estimate the $II_a$ term, 
$$0 < |II_a (s)| < 2 \sum_{k \geq 0} k e^{-k l(\gamma)} = \left . - 2 \frac{d}{dx} \sum_{k \geq 0} e^{-k x} \right|_{x=l(\gamma)} = \frac{2 e^{- l(\gamma)}}{(1-e^{- l(\gamma)})^2}.$$
Consequently, 
$$\lim_{\Re s \to \infty} \frac{A_\gamma(s) e^{sl(\gamma)} (1-e^{-l(\gamma)})}{s l(\gamma)} = \lim_{\Re s \to \infty} I_a (s) (1-e^{-l(\gamma)}) + \frac{II_a(s) (1 - e^{-l(\gamma)})}{s} = 1.$$
\end{proof} 

In a similar way, we compute the asymptotic behavior of $B_\gamma(s)$ as $\Re s \to \infty$.  

\begin{prop} \label{propbgamma} 
We have for each $\gamma$ 
$$\lim_{\Re s \to \infty} - \frac{B_\gamma(s) e^{s l(\gamma)} (1-e^{-l(\gamma)})}{l(\gamma)^2 s^2} = 1.$$
\end{prop}

\begin{proof}
The proof is quite similar to that of the preceding proposition.  We write 
$$B_\gamma (s) = - l(\gamma)^2 \left[ s^2 \sum_{k \geq 0} \frac{e^{(s+k) l(\gamma)}}{(e^{(s+k) l(\gamma)} -1)^2} + 2s \sum_{k \geq 0} \frac{k e^{(s+k) l(\gamma)}}{(e^{(s+k) l(\gamma)} -1)^2} + \sum_{k \geq 0} \frac{k^2 e^{(s+k) l(\gamma)}}{(e^{(s+k) l(\gamma)} -1)^2} \right].$$
We therefore define three terms, 
$$I_b (s) := \sum_{k \geq 0} \frac{e^{(s+k) l(\gamma)}}{(e^{(s+k) l(\gamma)} -1)^2}, \quad II_b(s) = \sum_{k \geq 0} \frac{k e^{(s+k) l(\gamma)}}{(e^{(s+k) l(\gamma)} -1)^2},$$
and 
$$III_b (s) =  \sum_{k \geq 0} \frac{k^2 e^{(s+k) l(\gamma)}}{(e^{(s+k) l(\gamma)} -1)^2}.$$

We begin by computing that 
$$I_b (s) = \sum_{k \geq 0} \frac{1}{(e^{(s+k) l(\gamma)} -1)(1-e^{-(s+k) l(\gamma)})} = \frac{1}{e^{sl(\gamma)}} \sum_{k \geq 0} \frac{1}{(e^{k l(\gamma)} - e^{-sl(\gamma)})( 1-e^{-(s+k) l(\gamma)})}.$$
We note that 
$$\lim_{s \to \infty} 1 - e^{-(s+k) l(\gamma)} = 1 \quad \forall k \geq 0,$$
and that 
$$|1 - e^{-(s+k) l(\gamma)}| \geq \frac 1 2, \quad \textrm{ for all } k \geq 0, \textrm{ when } \Re s > \frac{\ln(2)}{l_0}.$$
The function 
$$f(z) = \sum_{k \geq 0} \frac{1}{(e^{k l(\gamma)} - z)( 1- z e^{-k l(\gamma)})}$$
is continuous for all $|z| < 1$.  In particular, this function is continuous at $z=0$.  We therefore have, with 
$$z = e^{-s l(\gamma)},$$
$$\lim_{z \to 0} f(z) = f(0) = \sum_{k \geq 0} e^{-k l(\gamma)} = \frac{1}{1-e^{-l(\gamma)}}.$$
Hence, we see that 
\begin{equation} \label{ibs} \lim_{\Re s \to \infty} e^{s l(\gamma)} I_b(s) (1-e^{-l(\gamma)}) = 1. \end{equation} 
We shall estimate the other two terms, 
$$0 < | II_b (s) | = \left| \frac{1}{e^{s l(\gamma)}} \sum_{k \geq 0} \frac{k}{(e^{k l(\gamma)} - e^{-sl(\gamma)})( 1-e^{-(s+k) l(\gamma)})} \right| \leq \left| \frac{1}{e^{s l(\gamma)}} \right| \sum_{k \geq 0} \left| \frac{2k}{(e^{k l(\gamma)} - e^{-sl(\gamma)})} \right| $$
$$ \leq \left| \frac{4}{e^{s l(\gamma)}} \right| \sum_{k \geq 0} k e^{-k l(\gamma)} = \frac{4 e^{-l(\gamma)}}{|e^{s l(\gamma)}| (1-e^{-l(\gamma)})^2}. $$
We therefore see that 
\begin{equation} \label{iibs} \lim_{\Re s \to \infty} \frac{II_b (s) e^{s l(\gamma)}}{s} = 0. \end{equation} 

Next, 
$$0 <| III_b (s) | = \frac{1}{|e^{s l(\gamma)}|} \left| \sum_{k \geq 0} \frac{k^2}{(e^{k l(\gamma)} - e^{-sl(\gamma)})( 1-e^{-(s+k) l(\gamma)})}  \right| \leq \frac{4}{|e^{s l(\gamma)}|} \sum_{k \geq 0} k^2 e^{-k l(\gamma)}.$$
We compute the sum 
$$\sum_{k \geq 0} k^2 e^{-kx} = \frac{d^2}{dx^2} \sum_{k \geq 0} e^{-kx} = \frac{e^{-x}}{(1-e^{-x})^2} + \frac{2 e^{-2x}}{(1-e^{-x})^3}.$$
Hence, 
$$0 < | III_b (s) | \leq \frac{4}{|e^{s l(\gamma)}|} \left( \frac{e^{-l(\gamma)}}{(1-e^{-l(\gamma)})^2} + \frac{2 e^{-2 l(\gamma)}}{(1-e^{-l(\gamma)})^3} \right). $$
In particular, 
\begin{equation} \label{iiibs} \lim_{\Re s \to \infty} \frac{III_b (s) e^{s l(\gamma)}}{s^2} = 0. \end{equation} 

Recalling that 
$$B_\gamma(s) = - l(\gamma)^2 \left( s^2 I_b (s) + 2s II_b (s) + III_b (s) \right),$$
by \eqref{ibs}, \eqref{iibs}, and \eqref{iiibs} we see that 
$$\lim_{\Re s \to \infty}  - \frac{B_\gamma(s) e^{s l(\gamma)} (1-e^{-l(\gamma)})}{l(\gamma)^2 s^2} $$
$$= \lim_{\Re s \to \infty} I_b(s) e^{s l(\gamma)} (1-e^{-l(\gamma)}) + \frac{2 II_b (s) e^{s l(\gamma)} (1-e^{-l(\gamma)})}{s} + \frac{III_b(s) e^{s l(\gamma)} (1-e^{-l(\gamma)})}{s^2} = 1.$$ 
\end{proof}

We are now poised to complete the proof of Theorem \ref{thmasy}.  
\begin{proof}[Proof of Theorem \ref{thmasy}]  
We first assume, recalling \eqref{abusenot}, 
$$|\pa_\mu  l(\gamma_0)|^2 \neq 0.$$
This immediately implies 
$$\pa_\mu \log l(\gamma_0) \neq 0 \implies |\pa_\mu \log l(\gamma_0)|^2 \neq 0.$$
For all $\gamma$ with $l(\gamma) > l_0$, we note that by Propositions \ref{propagamma} and \ref{propbgamma}, 
$$\lim_{\Re s \to \infty} \frac{ A_\gamma(s)}{B_{\gamma_0} (s)} =0 = \lim_{\Re s \to \infty} \frac{ B_\gamma(s)}{B_{\gamma_0} (s)}.$$
Hence we see that by Theorem 1, 
$$\lim_{\Re s \to \infty} \frac{ \bar \pa_\mu \pa_\mu \log Z(s)}{B_{\gamma_0}(s)} = 1.$$
This together with Proposition \ref{propbgamma} for the asymptotics of $B_{\gamma_0} (s)$ as $\Re s \to \infty$ completes the proof of the theorem in this case.  

Next, we assume that 
$$\pa_\mu  l_0 = 0.$$
This shows that the coefficient of $B_{\gamma_0}(s)$ vanishes.  By \cite[Corollary 1.3]{Ax-Sch-2012}
$$\bar \pa_\mu \pa_\mu l(\gamma) \neq 0, \quad \forall \gamma \in \Gamma.$$
Hence, the term with the slowest exponential decay as $\Re s \to \infty$ is $A_{\gamma_0} (s)$.  We similarly see that for all $\gamma$ with $l(\gamma) > l_0 = l(\gamma_0)$ that 
$$\lim_{\Re s \to \infty} \frac{B_\gamma(s)}{A_{\gamma_0} (s)} = 0 = \lim_{\Re s \to \infty} \frac{A_{\gamma} (s)}{A_{\gamma_0} (s)}.$$
We therefore have by Theorem 1, 
$$\lim_{\Re s \to \infty} \frac{ \bar \pa_\mu \pa_\mu \log Z(s)}{A_{\gamma_0} (s)} = 1.$$
The proof is then completed by the asymptotics of $A_{\gamma_0}(s)$ as $s \to \infty$ given in Proposition \ref{propagamma}. The statements for the Ruelle zeta function follow immediately upon noting that $\log R(s)$ is simply given by the $k=0$ term in $\log Z(s)$.

For the Hilbert-Schmidt norm of the squared resolvent, 
$$\bar \pa_\mu \pa_\mu  || (\Delta_0 + s(s-1))^{-1} ||_{HS} ^2 = - L^2 \bar \pa_\mu \pa_\mu \log Z(s).$$
Recalling the definition of $L$ from Lemma \ref{le:hsnorm} we compute that 
$$L^2 = -  \frac{2}{(2s-1)^3} \frac {d}{ds} + \frac{1}{(2s-1)^2} \frac{d^2}{ds^2}.$$
In the case $|\pa_\mu  l(\gamma_0)|^2 \neq 0$, the asymptotic behavior of $\bar \pa_\mu \pa_\mu  || (\Delta_0 + s(s-1))^{-1} ||_{HS} ^2$ as $\Re s \to \infty$ is therefore given by the dominant asymptotic behavior of 
$$\frac{ |\pa_\mu l_0|^2}{1-e^{-l_0}} \left(  -  \frac{2}{(2s-1)^3} \frac {d}{ds} + \frac{1}{(2s-1)^2} \frac{d^2}{ds^2} \right) s^2 e^{-s l_0}$$
$$ = \frac{|\pa_\mu  l_0|^2}{(1-e^{-l_0})(2s-1)^2} \left[  - \frac{2 \left( 2se^{-sl_0} - s^2 l_0 e^{-sl_0}\right)}{2s-1} + 2e^{-sl_0} - 2sl_0 e^{-sl_0} - 2s l_0 e^{-s l_0} + s^2 l_0^2 e^{-s l_0} \right]$$
$$= \frac{|\pa_\mu  l_0|^2 l_0 ^2 e^{-s l_0} }{4(1-e^{-l_0})} \left( 1 + \mathcal O(s^{-1}) \right), \quad \Re s \to \infty.$$ 

We therefore have in this case
$$\lim_{\Re s \to \infty} \frac{\bar \pa_\mu \pa_\mu  || (\Delta_0 + s(s-1))^{-1} ||_{HS} ^2 4 (1-e^{-l_0})}{|\pa_\mu l_0|^2 l_0^2 e^{-s l_0}} = 1.$$

In the case $\pa_\mu  l_0 = 0$, the asymptotic behavior of $\bar \pa_\mu \pa_\mu  || (\Delta_0 + s(s-1))^{-1} ||_{HS} ^2$ as $\Re s \to \infty$ is given by that of 
$$-L^2 \bar \pa_\mu \pa_\mu \log l_0 \left( \frac{s l_0 e^{-s l_0}}{1-e^{-l_0}} \right)$$
$$=\frac{\bar \pa_\mu \pa_\mu \log l_0}{1-e^{-l_0}} \left(\frac{2}{(2s-1)^3} \left( e^{-sl_0} - l_0 s e^{-s l_0} \right) - \frac{1}{(2s-1)^2} \left( - l_0 e^{-s l_0} - l_0 e^{-s l_0} + l_0^2 s e^{-s l_0} \right) \right)$$
$$= - \frac{l_0^2 e^{-s l_0} \bar \pa_\mu \pa_\mu \log l_0}{4s(1-e^{-l_0})} \left( 1+ O(1/s) \right) \quad \Re s \to \infty.$$  
We therefore have 
$$\lim_{\Re s \to \infty} - \frac{\bar \pa_\mu \pa_\mu  || (\Delta_0 + s(s-1))^{-1} ||_{HS} ^2 4 s (1-e^{-l_0})}{l_0 ^2 e^{-s l_0} \bar \pa_\mu \pa_\mu \log l_0} = 1.$$

\end{proof} 

\subsection{Curvature asymptotics}  
We now consider the special case when $s=m \in \N$, beginning by recalling 

\begin{prop}\label{TZ-form} [Theorem 2 \cite{ZT1987}] 
The second variation 
\begin{equation} \label{2ndvar1} \bar\partial_\mu\partial_\mu \log Z'(1) =-\cher^{(1)}(\mu, \mu) + \frac{1}{12\pi} \Vert \mu\Vert^2_{WP}. \end{equation} 
Let $m \geq 2$.  Then, the second variation 
\begin{equation} \label{2ndvarm} \bar\partial_\mu\partial_\mu \log Z(m) = -\cher^{(m)}(\mu, \mu)  + \frac{6(m-1)^2+6(m-1) +1}{12\pi} \Vert \mu\Vert^2_{WP}.
\end{equation} 
\end{prop}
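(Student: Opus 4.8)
My plan is to deduce this from the local index theorem of \cite{ZT1987}, which it essentially restates in our notation. First I would invoke Proposition \ref{prop:zdet}: the factors multiplying $Z(s)$ in Sarnak's formula (\ref{sarnakzdet}) depend only on $s$ and the genus $g$, hence are constant along $\cT$, so $\bar\partial_\mu\partial_\mu\log Z(m)=\bar\partial_\mu\partial_\mu\log\det(\Delta_0+m(m-1))$ for $m\ge 2$, and, by (\ref{sarnakzdet0}), $\bar\partial_\mu\partial_\mu\log Z'(1)=\bar\partial_\mu\partial_\mu\log\det(\Delta_0)$, the zero eigenvalue being omitted in the latter determinant. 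It therefore suffices to compute the second variation of the logarithm of the zeta-regularized determinant of the scalar Laplacian with spectral shift $m(m-1)$.

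Next I would recognize this determinant inside the Quillen metric on the determinant line bundle $\lambda_m:=\det E$ of the Hodge bundle $E_t=H^0(X_t,\mathcal K^m)$ over $\cT$. The nonzero spectrum of $\Delta_0+m(m-1)$ agrees, up to the classical normalizing constant, with that of the $\bar\partial$-Laplacian $\bar\partial^\ast\bar\partial$ on sections of $\mathcal K^m$, and $H^{(0,1)}(X_t,\mathcal K^m)=0$ for $m\ge 2$ by Serre duality (since $\deg\mathcal K^{1-m}<0$); hence the Quillen norm on $\lambda_m$ is, up to a factor depending only on $m$ and $g$, the product of the $L^2$ (Hodge) norm and $\det(\Delta_0+m(m-1))$. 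Applying $\bar\partial_\mu\partial_\mu\log$ to this identity cancels the constant factor, so that $\bar\partial_\mu\partial_\mu\log\det(\Delta_0+m(m-1))$ is the difference of the Quillen curvature and the $L^2$-curvature of $\lambda_m$, evaluated in the direction $\mu$. The curvature of the $L^2$-metric on $\det E$ is the trace of the curvature of the $L^2$-metric on $E$, and is therefore $Ric^{(m)}(\mu,\mu)$ in the notation of Proposition \ref{Bob-prop}; the Quillen curvature is evaluated by Teorema 2 of \cite{ZT1987} (a special case of the family local index theorem), which gives $\frac{6(m-1)^2+6(m-1)+1}{12\pi}\Vert\mu\Vert^2_{WP}$. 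Hence, after tracking signs,
$$\bar\partial_\mu\partial_\mu\log Z(m) = \bar\partial_\mu\partial_\mu\log\det(\Delta_0+m(m-1)) = \frac{6(m-1)^2+6(m-1)+1}{12\pi}\Vert\mu\Vert^2_{WP} - Ric^{(m)}(\mu,\mu),$$
which is (\ref{2ndvarm}). For $m=1$ the argument is the same with two modifications forced by the facts that $Z(1)=0$ and that $\Delta_0$ has the constant functions in its kernel: one works with $Z'(1)$ and with the determinant of $\Delta_0$ restricted to the orthogonal complement of the constants, and one takes $E_t=H^0(X_t,\mathcal K)$, the bundle of holomorphic abelian differentials. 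Here $H^{(0,1)}(X_t,\mathcal K)\cong\C$ is precisely the one-dimensional cokernel responsible for the passage to the restricted determinant and to $Z'(1)$; the index-theoretic constant specializes to $6\cdot 0+6\cdot 0+1=1$, producing (\ref{2ndvar1}).

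The main obstacle I anticipate is bookkeeping rather than conceptual. One must treat the one-dimensional kernel of $\Delta_0$ in the case $m=1$ with care, so that $Z'(1)$, the restricted determinant, and the $L^2$-metric on $\det H^0(X_t,\mathcal K)$ are all normalized compatibly; and, more delicately, one must reconcile the normalizations --- of the Weil--Petersson metric, of the spectral shift $m(m-1)$ versus the weight $m-1$ of $\mathcal L=\mathcal K^{m-1}$ appearing in Berndtsson's formula, and of the sign of the curvature --- among Sarnak's formula, Proposition \ref{Bob-prop}, and \cite{ZT1987}, so that the numerical coefficient emerges as exactly $6(m-1)^2+6(m-1)+1$ and not some other quadratic polynomial in $m$.
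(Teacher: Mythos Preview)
The paper does not give a proof of this proposition at all: it is simply stated as a direct quotation of Teorema~2 of \cite{ZT1987}, with no argument supplied. Your proposal is therefore not competing with any proof in the paper, and what you have written is a correct (if sketchy) outline of the standard derivation of the Zograf--Takhtajan formula via the Quillen metric on the determinant line of the Hodge bundle, together with Sarnak's identity (\ref{sarnakzdet}) to pass from $\det(\Delta_0+m(m-1))$ to $Z(m)$. In that sense your write-up is more than the paper itself offers, and is consistent with the result being cited; your closing caveats about normalization bookkeeping (the factor of $2$ between $\Delta_0$ and $\square_0$, the sign conventions for curvature, and the treatment of the kernel at $m=1$) are exactly the places where care is needed, but these are resolved in \cite{ZT1987} and there is no gap in the logic.
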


The curvature  $\cher^{(m)}(\mu, \mu)$ has been known to admit an expansion of the form $c_2 m^2 +c_1 m + c_0 + \cdots$  for large $m$.\footnote{We began by computing the coefficients of $m^{-k}$ for $k\in \N$ using the expression of the curvature given in Proposition \ref{Bob-prop}.  These coefficients become increasingly complicated expressions at an exponential rate, and in the end, they vanished in each increasingly lengthy calculation.  Indeed, our results show that there are no further non-zero coefficients $c_k m^k$ for $k \in \Z$ with $k \leq -1$, because the remainder is $O(k^{-N})$ for any $N \in \N$ as $k \to \infty$.}  In \cite{Ma-Zh} the first two coefficients have been found explicitly for general fibration of K\"a{}hler manifolds. We determine $c_0$ and the remainder in Corollary 2.  

\begin{proof}[Proof of Corollary 2]
We use \eqref{2ndvarm} to write  
$$\bar\partial_\mu\partial_\mu \log Z(m) = -\cher^{(m)}(\mu, \mu)  + \frac{6m(m-1)+1}{12\pi} \Vert \mu\Vert^2_{WP}.$$
We have proven that 
$$\lim_{m \to \infty} - \frac{\bar\partial_\mu\partial_\mu \log Z(m) e^{m l_0} (1-e^{-l_0})}{l_0^2 m^2 |\pa_\mu \log l_0|^2} = 1.$$
This shows that 
$$\lim_{m \to \infty} \left( -\cher^{(m)}(\mu, \mu)  + \frac{6m(m-1)+1}{12\pi} \Vert \mu\Vert^2_{WP} \right) \frac{e^{m l_0}}{m^2} = \frac{l_0^2 |\pa_\mu \log l_0|^2}{(1-e^{-l_0})}.$$
Hence, 
$$-\cher^{(m)} (\mu, \mu) +  \frac{6m(m-1)+1}{12\pi} \Vert \mu\Vert^2_{WP} = O(m^2 e^{-m l_0}), \quad m \to \infty,$$
which shows that 
$$\cher^{(m)} (\mu, \mu) = \frac{6m(m-1)+1}{12\pi} \Vert \mu\Vert^2_{WP} + O(m^2 e^{-m l_0}), \quad m \to \infty.$$
\end{proof}

\begin{remark} In a recent preprint \cite{Wan-Zhang} the third author together with Xueyuan Wan  been able to prove a general result
in the setting of  families of K\"a{}hler manifolds.  The result shows that the leading three terms of 
the curvature $\cher^{(m)}(\mu, \mu)$ and the Quillen curvature agree. In the case of Riemann surfaces this can be proved independently using the Bergman kernel expansion  \cite{Lu-AJM} and Berndtsson's curvature formula in Proposition \ref{Bob-prop}.
\end{remark}

\subsection{The cases $m=1, 2$.}
Here we  consider the special cases $m=1, 2$.  For $m=1$ we show that there are surfaces and harmonic Beltrami differentials for which the second variation is strictly positive, as well as surfaces and harmonic Beltrami differentials for which the second variation is strictly negative.  In \cite[Lemma 4.6]{wolpert86}, Wolpert obtained an upper estimate of $-\frac{1}{2\pi(g-1)}$ for the curvature  $-\cher^{(2)}$. We shall use Wolf's pointwise estimates \cite{Wolf-JDG-2012} for solutions of the Laplace equation to find a sharper estimate, $-\frac{1}{6\pi(g-1)}$. This might also be known to experts. Consequently we find also estimates for the variation of the Selberg zeta function.

\begin{prop} Let $m=1$. \begin{enumerate}
\item Let the genus $g\ge 3$. Then there exists $t \in \mathcal T_g$ such that the corresponding Riemannian surface, $X=X_t$ is hyperelliptic.  Moreover, there exist harmonic Beltrami differentials, $\mu$ such that 
$ \bar\partial_\mu\partial_\mu \log Z'(1) >0$.
\item Let $g \ge 2$. At any point $t \in \mathcal T_g$ there exists a Beltrami differential $\mu$ such that for the corresponding Riemannian surface, $X=X_t$, $ \bar\partial_\mu\partial_\mu \log Z'(1) <0.$ 
\end{enumerate}
\end{prop}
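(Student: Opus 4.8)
The plan is to reduce both statements to facts about the Ricci curvature $Ric^{(1)}(\mu,\mu)$ of the Hodge bundle $H^0(\mathcal K)$, via the Takhtajan--Zograf identity of Proposition~\ref{TZ-form},
$$\bar\partial_\mu\partial_\mu\log Z'(1) = -Ric^{(1)}(\mu,\mu) + \frac{1}{12\pi}\,\Vert\mu\Vert^2_{WP},$$
together with Berndtsson's formula of Proposition~\ref{Bob-prop} for $m=1$, $Ric^{(1)}(\mu,\mu) = \sum_{j=1}^{g}\Vert[\mu\cdot u_j]\Vert^2$, where $\{u_j\}$ is any $L^2$-orthonormal basis of $H^0(X,\mathcal K)$ and the norm is that of the harmonic representative of the class $[\mu\cdot u_j]\in H^{0,1}(X,\mathcal O)$. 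Thus for part~(1) it suffices to produce a hyperelliptic $X$ and a nonzero harmonic Beltrami differential $\mu$ with $[\mu\cdot u_j]=0$ for all $j$, since then $\bar\partial_\mu\partial_\mu\log Z'(1) = \frac{1}{12\pi}\Vert\mu\Vert^2_{WP}>0$; and for part~(2), at an arbitrary $t$, it suffices to find $\mu$ with $Ric^{(1)}(\mu,\mu) > \frac{1}{12\pi}\Vert\mu\Vert^2_{WP}$.

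For part~(1), fix a hyperelliptic surface $X$ (these exist for every $g$, e.g.\ $y^2=\prod_{i=1}^{2g+2}(x-a_i)$, and for $g\ge 3$ the hyperelliptic locus is a nonempty subvariety of $\mathcal T_g$), with hyperelliptic involution $\sigma$. Every holomorphic $1$-form satisfies $\sigma^*\omega=-\omega$, and dually, by Hodge theory, $\sigma^*=-\mathrm{id}$ on $H^{0,1}(X,\mathcal O)\cong\overline{H^0(X,\mathcal K)}$. An elementary count in the affine model (invariant quadratic differentials $h(x)(dx)^2/y^2$ with $\deg h\le 2g-2$: dimension $2g-1$; anti-invariant ones $h(x)(dx)^2/y$ with $\deg h\le g-3$: dimension $g-2$, adding up to $3g-3$) shows the $\sigma$-anti-invariant subspace of $H^0(X,\mathcal K^2)$ has dimension $g-2\ge 1$. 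Choose a nonzero $\phi$ there and let $\mu$ be the associated harmonic Beltrami differential of (\ref{phi-mu}); since $\sigma$ is an isometry of the hyperbolic metric ($\sigma^*\rho=\rho$), we get $\sigma^*\mu=-\mu$. Then $\mu\cdot u_j$, being a contraction of two $\sigma$-anti-invariant tensors, is $\sigma$-invariant, so its class in $H^{0,1}(X,\mathcal O)$ is $\sigma$-fixed; as $\sigma$ acts by $-1$ there, $[\mu\cdot u_j]=0$ for all $j$, hence $Ric^{(1)}(\mu,\mu)=0$ and (1) follows.

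For part~(2), fix $t$ with surface $X$, a Weil--Petersson orthonormal basis $\{\mu_i\}_{i=1}^{3g-3}$ of $T_t^{(1,0)}(\mathcal T)$, and an $L^2$-orthonormal basis $\{u_j\}_{j=1}^{g}$ of $H^0(X,\mathcal K)$; then $\{\overline{u_k}\}$ is an orthonormal basis of the harmonic $(0,1)$-forms. Expanding the harmonic representative of each $[\mu_i\cdot u_j]$ in this basis and summing over $i,j$ yields
$$\sum_i Ric^{(1)}(\mu_i,\mu_i) = \sum_{i,j,k}\bigl|\langle\mu_i u_j,\overline{u_k}\rangle_{0,1}\bigr|^2 = \sum_{j,k}\Vert u_j u_k\Vert^2_{2,0} = \int_X B(z)^2\rho(z)^{-1}\,dA,$$
where $B(z)=\sum_j|u_j(z)|^2$; the middle step uses the conjugate-linear isometry of (\ref{phi-mu}) to rewrite $\langle\mu_i u_j,\overline{u_k}\rangle_{0,1}=\langle u_j u_k,\phi_i\rangle_{2,0}$ and then Parseval, since $\{\phi_i\}$ is a complete orthonormal basis of $H^0(X,\mathcal K^2)$. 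Now $\int_X B\,dA=g$ and $\int_X\rho\,dA=4\pi(g-1)$ by Gauss--Bonnet, so Cauchy--Schwarz gives $\int_X B^2\rho^{-1}\,dA\ge g^2/(4\pi(g-1))$. Hence, using $\sum_i\Vert\mu_i\Vert^2_{WP}=3g-3$,
$$\sum_i \bar\partial_{\mu_i}\partial_{\mu_i}\log Z'(1) = -\sum_i Ric^{(1)}(\mu_i,\mu_i)+\frac{3g-3}{12\pi} \le \frac{(g-1)^2-g^2}{4\pi(g-1)} = \frac{1-2g}{4\pi(g-1)}<0$$
for every $g\ge 2$, so at least one index $i$ satisfies $\bar\partial_{\mu_i}\partial_{\mu_i}\log Z'(1)<0$, and $\mu=\mu_i$ works.

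The main obstacle is the identity $\sum_i Ric^{(1)}(\mu_i,\mu_i)=\int_X B^2\rho^{-1}\,dA$: it requires careful bookkeeping of the $L^2$-normalizations in (\ref{integration}) for $(0,1)$-forms with coefficients versus holomorphic quadratic differentials, the observation that (\ref{phi-mu}) is a conjugate-linear isometry (hence carries orthonormal bases to orthonormal bases), and the collapse of the resulting double sum into the Bergman kernel function on the diagonal. For part~(1), the delicate point is the dimension count of $\sigma$-anti-invariant holomorphic quadratic differentials, which is exactly what forces the hypothesis $g\ge 3$.
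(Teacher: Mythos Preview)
Your proof is correct. For part~(1), you follow the same strategy as the paper: exploit the hyperelliptic involution to produce a harmonic Beltrami differential $\mu$ with $Ric^{(1)}(\mu,\mu)=0$, whence the Takhtajan--Zograf formula gives strict positivity. The paper simply cites \cite[Lemma~3.3 \& Proposition~3.4]{Bo-mz} for the existence of such $\mu$, whereas you spell out the construction via the $\sigma$-anti-invariant subspace of $H^0(\mathcal K^2)$ and the eigenvalue $-1$ of $\sigma^*$ on $H^{0,1}(X,\mathcal O)$; your dimension count $(g-2)$ correctly identifies why $g\ge 3$ is needed.

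For part~(2), your argument is genuinely different from the paper's. The paper picks a single abelian differential $\omega$ of unit norm, sets $\mu=\rho^{-1}\overline{\omega^2}$, and bounds $Ric^{(1)}(\mu,\mu)$ from below by the single matrix coefficient $|\langle\mu\cdot\omega,\bar\omega\rangle|^2$, thereby producing an \emph{explicit} $\mu$ with negative second variation. You instead run a trace argument: summing $Ric^{(1)}(\mu_i,\mu_i)$ over a Weil--Petersson orthonormal basis and using the conjugate-linear isometry~(\ref{phi-mu}) together with Parseval collapses the triple sum to $\int_X B^2\rho^{-1}\,dA$, with $B$ the Bergman density for $H^0(\mathcal K)$; Cauchy--Schwarz and Gauss--Bonnet then bound this below by $g^2/(4\pi(g-1))$. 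Your route is less constructive---it only guarantees that \emph{some} basis vector $\mu_i$ works---but it is entirely self-contained and delivers the clean uniform inequality $\sum_i\bar\partial_{\mu_i}\partial_{\mu_i}\log Z'(1)\le (1-2g)/(4\pi(g-1))$ for all $g\ge 2$, sidestepping any normalization subtleties in comparing $\Vert\mu\Vert_{WP}^4$ with $\Vert\mu\Vert_{WP}^2$.
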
 

\begin{proof} If  $X$ is hyperelliptic  then there exist harmonic Beltrami differentials $\mu$ such that 
$$ \langle R(\mu, \mu)u, u\rangle =0 $$
for all $u\in H^{0}(\mathcal K)$.  This follows for example from \cite[Lemma 3.3 \& Proposition 3.4]{Bo-mz}. 
Thus by Propositions \ref{TZ-form} and \ref{Bob-prop}, we have 
$$ \bar\partial_\mu\partial_\mu  \log Z'(1) > 0.$$

To prove the second statement, for any Riemann surface, $X$, let $\omega$ be any non-zero abelian differential
of norm 1.  Then $\omega^2$ is a holomorphic quadratic differential. Let $\mu=\rho^{-1} {\overline \omega^2}$ be the corresponding harmonic Beltrami differential.  We compute the curvature $\cher^{(1)}(\mu, \mu)$ using Proposition \ref{Bob-prop}.  To do this, let $\{\omega_j\} $ be an orthonormal basis of $H^0(\mathcal K)$, and fix 
$\omega_1=\omega$.  The sum $\Vert [\mu \cdot \omega_i]\Vert^2$
is
$$\sum_i\Vert [\mu \cdot \omega_i]\Vert^2 \ge \langle\mu \cdot\omega, \bar{\omega}\rangle^2 $$

and
$$\langle \mu \cdot\omega, \bar{\omega}\rangle=\int_{X} \rho^{-1} \overline{\omega}^2 \omega {\omega}  =\Vert{\omega^2}\Vert^2 =\Vert{\mu}\Vert^2_{WP}.$$
Thus 
$$-\cher^{(1)} (\mu, \mu) \le -\Vert{\mu}\Vert^2_{WP},$$
and 
$$\bar\partial_\mu\partial_\mu  \log Z'(1) =-\cher^{(1)}(\mu, \mu) +\frac{1}{12\pi}\Vert \mu\Vert^2 _{WP} \le 
\(-1+\frac{1}{12\pi}\)\Vert \mu\Vert^2 _{WP} <0.$$
\end{proof}

The case  $m=2$ is of special interest because $\mathcal H^0(\mathcal K^2)$ can be viewed as the dual of the tangent space of Teichm\"u{}ller space.  Moreover, the negative of the curvature, $- \cher^{(2)}$ is therefore the Ricci curvature of the Weil-Petersson metric.  This object has been studied intensively for quite some time; see for example \cite{wolpert86}. We demonstrate elementary upper and lower estimates of the variation in this case.  

\begin{prop} Let $m=2$ and $B=B^{(2)}$ be the Bergman kernel in this case. We have     
$$\bar\partial_\mu\partial_\mu  \log Z(2) \ge  \frac{13}{12\pi} \Vert\mu \Vert^2 _{WP} -  \Vert \mu\Vert_{L^4 (X)} ^2 \Vert B \Vert_{L^2 (X) }  - \Vert |\mu|^2 B\Vert_{L^1(X)}$$
and 
$$\bar\partial_\mu\partial_\mu  \log Z(2) \le \left(\frac{13}{12\pi} - \frac{1}{6\pi(g-1)} \right)  \Vert\mu \Vert^2_{WP}.$$
\end{prop}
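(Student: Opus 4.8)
The plan is to convert both inequalities into estimates for the Ricci curvature $Ric^{(2)}(\mu,\mu)$ by means of Proposition~\ref{TZ-form}, which for $m=2$ reads
$$\bar\partial_\mu\partial_\mu \log Z(2) = \frac{13}{12\pi}\|\mu\|^2_{WP} - Ric^{(2)}(\mu,\mu).$$
Thus the asserted lower bound for $\bar\partial_\mu\partial_\mu\log Z(2)$ is equivalent to $Ric^{(2)}(\mu,\mu)\le \|\mu\|_{L^4(X)}^2\|B\|_{L^2(X)}+\||\mu|^2 B\|_{L^1(X)}$, and the asserted upper bound is equivalent to $Ric^{(2)}(\mu,\mu)\ge \frac{1}{6\pi(g-1)}\|\mu\|^2_{WP}$; we may assume $\mu\neq 0$. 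For both estimates the input is Berndtsson's formula (Proposition~\ref{Bob-prop}): for any orthonormal basis $\{u_j\}$ of $H^0(X,\mathcal K^2)$,
$$Ric^{(2)}(\mu,\mu)=I^{(2)}+II^{(2)},\qquad I^{(2)}=\sum_j\int_X f(\mu)|u_j|^2,\qquad II^{(2)}=\frac12\sum_j\left\langle\left(\square''+\tfrac12\right)^{-1}(\mu\cdot u_j),\,\mu\cdot u_j\right\rangle,$$
with $f(\mu)=(1+\square_0)^{-1}|\mu|^2\ge 0$. One records the pointwise identities $\sum_j|u_j|^2=B$, $|\mu\cdot u_j|^2=|\mu|^2|u_j|^2$, and $|\phi|^2=|\mu|^2$ for the holomorphic quadratic differential $\phi$ with $\phi(z)=\rho(z)\overline{\mu(z)}$ as in~(\ref{phi-mu}); these follow by writing $\mu=\mu(z)(dz)^{-1}d\bar z$, $u_j=\psi_j(z)(dz)^2$ and comparing fibre metrics, and one also has $\|\phi\|^2=\|\mu\|^2_{WP}$.

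For the upper bound on $Ric^{(2)}$, I would estimate the two pieces separately. Since $\square_0\ge 0$, the resolvent $(1+\square_0)^{-1}$ has operator norm $\le 1$ on $L^2$, so $\|f(\mu)\|_{L^2}\le \||\mu|^2\|_{L^2}=\|\mu\|_{L^4(X)}^2$, and Cauchy--Schwarz gives $I^{(2)}=\langle f(\mu),B\rangle\le \|\mu\|_{L^4(X)}^2\|B\|_{L^2(X)}$. Since $\square''\ge 0$, the operator $(\square''+\tfrac12)^{-1}$ has operator norm $\le 2$, so
$$II^{(2)}\le \sum_j\|\mu\cdot u_j\|^2=\int_X|\mu|^2 B=\||\mu|^2 B\|_{L^1(X)},$$
where all the $L^1$, $L^2$, $L^4$ norms are taken with respect to the hyperbolic measure $\rho\,dA$. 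Adding these two bounds yields the first (lower) estimate for $\bar\partial_\mu\partial_\mu\log Z(2)$.

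For the lower bound on $Ric^{(2)}$, discard the nonnegative term: $II^{(2)}\ge 0$ because $(\square''+\tfrac12)^{-1}$ is a positive operator, so $Ric^{(2)}(\mu,\mu)\ge I^{(2)}=\int_X f(\mu)B$. The key point is the extremal property of the Bergman kernel, $B(z)\ge |u(z)|^2$ for every unit vector $u\in H^0(X,\mathcal K^2)$; applied to $u=\phi/\|\phi\|$ this gives $B\ge |\mu|^2/\|\mu\|^2_{WP}$ pointwise, whence $I^{(2)}\ge \|\mu\|_{WP}^{-2}\langle(1+\square_0)^{-1}|\mu|^2,\,|\mu|^2\rangle$. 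Testing the positive operator $(1+\square_0)^{-1}$ against the constant function $\mathbf 1$ (using $\square_0\mathbf 1=0$ and $\langle\mathbf 1,\mathbf 1\rangle=\operatorname{Area}(X)=4\pi(g-1)$ by Gauss--Bonnet) yields, via Cauchy--Schwarz, $\langle(1+\square_0)^{-1}|\mu|^2,\,|\mu|^2\rangle\ge \|\mu\|^4_{WP}/\bigl(4\pi(g-1)\bigr)$, and therefore
$$Ric^{(2)}(\mu,\mu)\ge I^{(2)}\ge \frac{\|\mu\|^2_{WP}}{4\pi(g-1)}\ge \frac{\|\mu\|^2_{WP}}{6\pi(g-1)},$$
which gives the second (upper) estimate for $\bar\partial_\mu\partial_\mu\log Z(2)$ (in fact with the slightly better constant $\frac{1}{4\pi(g-1)}$).

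The computations are all elementary; the only genuinely substantive ingredient is the variational characterization of the Bergman kernel together with the choice $u=\phi/\|\phi\|$, which is what links $I^{(2)}$ to the Weil--Petersson norm, plus the single Cauchy--Schwarz step against the constant function. I expect the only place requiring care to be the bookkeeping of the fibre-metric normalizations, so that the pointwise identities $|\mu\cdot u_j|^2=|\mu|^2|u_j|^2$ and $|\phi|^2=|\mu|^2$ and the matching of the $L^1$, $L^2$, $L^4$ norms with the hyperbolic measure come out correctly; beyond that and the elementary spectral bounds on $(1+\square_0)^{-1}$ and $(\square''+\tfrac12)^{-1}$ there is no analytic obstacle.
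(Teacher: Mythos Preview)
Your argument is correct. For the lower bound on $\bar\partial_\mu\partial_\mu\log Z(2)$ (equivalently the upper bound on $Ric^{(2)}$) your proof is essentially the paper's: both bound $II^{(2)}\le\int_X|\mu|^2 B$ via the trivial spectral bound and then apply Cauchy--Schwarz to $I^{(2)}$. Your use of the $L^2$ operator norm of $(1+\square_0)^{-1}$ to reach $\|f(\mu)\|_{L^2}\le\|\mu\|_{L^4}^2$ before Cauchy--Schwarz is in fact cleaner than the paper, which first replaces $f(\mu)$ by $|\mu|^2$ pointwise.

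For the upper bound on $\bar\partial_\mu\partial_\mu\log Z(2)$ you take a genuinely different and more elementary route. The paper keeps \emph{both} the $I^{(2)}$ and $II^{(2)}$ contributions from the single basis element $u_1=\phi/\|\mu\|_{WP}$, uses the unitary equivalence $\square_{1,1}\cong\square_0$ to identify them, and then invokes a pointwise lemma of Wolf, $(1+\square_0)^{-1}|\mu|^2\ge\tfrac13|\mu|^2$, followed by Cauchy--Schwarz on $\int_X|\mu|^4$; this produces the constant $\tfrac{2}{3}\cdot\tfrac{1}{4\pi(g-1)}=\tfrac{1}{6\pi(g-1)}$. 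You instead discard $II^{(2)}$ entirely and estimate $\langle(1+\square_0)^{-1}|\mu|^2,|\mu|^2\rangle$ directly by the operator Cauchy--Schwarz inequality against the constant function $\mathbf 1$ (which is a genuine eigenfunction, so $T\mathbf 1=\mathbf 1$), obtaining $\langle T|\mu|^2,|\mu|^2\rangle\ge\|\mu\|_{WP}^4/(4\pi(g-1))$ and hence the sharper constant $\tfrac{1}{4\pi(g-1)}$. Your approach avoids both the external citation to Wolf and the unitary equivalence step; the paper's approach, on the other hand, extracts information from $II^{(2)}$ as well, so combining its factor $2$ with your Cauchy--Schwarz step would yield the still better constant $\tfrac{1}{2\pi(g-1)}$.
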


\begin{proof}
In this case we have 
$$\bar\partial_\mu\partial_\mu  \log Z(2) = - \cher^{(2)} (\mu, \mu) + \frac{12}{12\pi} \Vert \mu \Vert^2 _{WP}.$$
We shall prove the estimates by demonstrating upper and lower estimates for $\cher^{(2)} (\mu, \mu)$ in this case.  In this way, we are also simultaneously demonstrating lower and upper estimates for the Ricci curvature of the Weil-Petersson metric.  

By Proposition \ref{Bob-prop} 
$$\cher^{(2)} (\mu, \mu) = \int_X f(\mu) B + \sum_{j=1} ^{d_2} \langle
(\Box_{1,1} +1)^{-1} (\mu \cdot u_j), \mu \cdot u_j \rangle,$$
with
$$f({\mu})=(1+\square_0)^{-1}|\mu|^2.$$
By the Cauchy-Schwarz inequality 
$$\left| \int_X f(\mu) B \right| \leq ||f(\mu)||_{L^2 (X)} ||B||_{L^2 (X)}.$$
We have the operator estimates $(\Box_{1,1} + 1)^{-1} \leq 1$, and $(\Box_0
+1)^{-1} \leq 1$.  In this case, we actually have the
unitary equivalence of $\Box_{1,1}$ and $\Box_0$ via the natural
identification
of $f(z) dz\otimes d\bar z$ with $f(z)y^2$, as
the metric tensor $y^{-2}dz\otimes d\bar z$ is globally defined.  Consequently 
$$||f(\mu)||_{L^2 (X)} \leq ||\mu^2||_{L^2 (X)} = ||\mu||_{L^4 (X)} ^2.$$
Hence,  
$$\cher^{(2)} (\mu, \mu) \leq \Vert \mu \Vert_{L^4(X)} ^2 \Vert B \Vert_{L^2(X)} +  \sum_{j=1} ^{d_2} \langle \mu \cdot u_j, \mu \cdot u_j \rangle.$$
As for the second term, this is just 
$$\sum_{j=1} ^{d_2} \langle \mu \cdot u_j, \mu \cdot u_j \rangle = \int_X |\mu|^2 B,$$
so we have the lower bound 
$$\bar\partial_\mu\partial_\mu  \log Z(2) \geq \frac{13}{12\pi} \Vert \mu \Vert^2 _{WP} - \Vert \mu \Vert_{L^4(X)} ^2 \Vert B \Vert_{L^2(X)} - \Vert |\mu|^2 B \Vert_{L^1(X)}.$$ 
This implies the lower estimate.  

To prove the upper estimate, we select one of the basis elements, $u_j$, to be the corresponding quadratic form of $\mu$.  So, for example, we set  
$$u_1 = \frac 1{\Vert \mu\Vert _{WP}} \rho \overline{\mu},$$
We therefore have 
$$\cher^{(2)} (\mu, \mu) \geq  \int_X (\Box_0 +1)^{-1} (|\mu|^2) B + \langle (\Box_{1,1} +1)^{-1} (\mu \cdot u_1), \mu \cdot u_1 \rangle$$
$$\geq \int_X (\Box_0 +1)^{-1} (|\mu|^2) |u_1|^2 +  \langle (\Box_{1,1} +1)^{-1} (\mu \cdot u_1), \mu \cdot u_1 \rangle.$$
It follows from the unitary equivalence of the operators $\Box_{1,1}$ and $\Box_0$  that 
\begin{equation} \label{ricm2} \cher^{(2)}(\mu, \mu) \ge   \frac 2{\Vert \mu\Vert ^{2} _{WP} } \langle  (1+\square_0)^{-1}|\mu|^2, |\mu|^2 \rangle  \end{equation} 

We proceed to estimate using \cite{Wolf-JDG-2012}.  On \cite[p. 30]{Wolf-JDG-2012}, we see that in Wolf's notation, our $\mu = \frac{\Phi}{g_0}$.  We also note that the operator denote $\Delta$ there is equal to $-\Delta_0 = -2\Box_0$ in our notation.  Hence, the statement of \cite[Lemma 5.1]{Wolf-JDG-2012} is in our context 
$$0 \leq v \leq -2(-2\Box_0 -2)^{-1} |\mu|^2, \quad v = \frac{|\mu|^2}{3}.$$
Of course, this is equivalent to 
$$0 \leq \frac{|\mu|^2}{3} \leq (\Box_0 +1)^{-1} |\mu|^2.$$
We use this estimate in (\ref{ricm2}) to obtain 
\begin{equation} \label{ricm2b} \cher^{(2)}(\mu, \mu) \geq \frac 2 3 \frac{1}{\Vert \mu\Vert^2_{WP} } \int_X |\mu|^4 . \end{equation} 
The Cauchy-Schwarz inequality gives 
$$\sqrt{ \int_X |\mu|^4 } \sqrt{ \int_X 1^2 } \geq \int_X |\mu|^2 \cdot 1 \implies \int_X |\mu|^4 \geq \( \int_X |\mu|^2 \)^2 \( \int_X 1 \)^{-1}.$$
So, recalling the fact that the volume of our surface is $4\pi(g-1)$, we have the lower estimate 
$$\int_X |\mu|^4 \geq \Vert \mu \Vert_{WP} ^4 \frac{1}{4\pi (g-1)}.$$ 
Consequently, putting this estimate into \eqref{ricm2b}, 
$$\cher^{(2)} (\mu, \mu) \geq  \frac{ \Vert \mu \Vert ^2 _{WP}}{6\pi(g-1)}.$$
\end{proof}

It would be interesting to find some more effective estimates for the variations in terms of the eigenvalues of $\square_{m-1, 1}$ and the geometry of the Riemann surface.  The related questions of estimating  the Weil-Petersson sectional curvature has been studied extensively; see \cite{Wolpert-JDG} and references therein.

\begin{appendix} 
\section{The Hilbert-Schmidt norm of the squared resolvent
and its variation}  \label{appendixa} 

\begin{proof}[Proof of Lemma \ref{le:hsnorm}] 
Let 
$$N(\gamma) = e^{\ell(\gamma)}, \quad \alpha = s-1/2, \quad \beta = s'-1/2, \quad c(r) = r \tanh(\pi r).$$
For non-prime $\gamma$, with 
$$\gamma = k \gamma_p, \quad \gamma_p \in \prim,  \quad N_p(\gamma) := N(\gamma_p).$$
Recall that by the Gauss-Bonnet Theorem, the area of our Riemann surface is $4\pi(g-1)$.  By the Selberg Trace Formula, 
$$R(s, s'):=\sum_{n \geq 0} \frac{1}{r_n^2 + \alpha^2} - \frac{1}{r_n^2 + \beta^2} =$$
$$ (g-1) \int_{\R} \left( \frac{1}{r^2 + \alpha^2} - \frac{1}{r^2 + \beta^2} \right) c(r) dr + \frac{1}{2\alpha} \sum_{\gamma \in \cL} \frac{\log N_p(\gamma) N(\gamma)^{-s} }{1-N(\gamma)^{-s}} - \frac{1}{2\beta} \sum_{\gamma \in \cL} \frac{ \log N_p(\gamma) N(\gamma)^{-s'}}{1-N(\gamma)^{-s'}}.$$

Here note that we are taking $\gamma$ not necessarily primitive, that is $\gamma \in \cL$ rather than $\cL_p$.  Hence, $N(\gamma) = e^{\ell(k \gamma)} = e^{k \ell(\gamma)}$.  The eigenvalues, 
$$r_n ^2 + \alpha^2 = r_n^2 + 1/4 + s(s-1), \quad r_n^2 + 1/4 = \lambda_n, \quad \sigma(\Delta_0) = \{ \lambda_n\}_{n \in \N}.$$
Thus, the spectrum of the operator 
$$\sigma( \Delta_0 + s(s-1)) = \{ \lambda_n + s(s-1)\}_{n \in \N} = \{ r_n^2 + \alpha^2 \}_{n \in \N}.$$
Then, the spectrum of the inverse, 
$$\sigma( (\Delta_0 + s(s-1)^{-1}) = \left\{ \frac{1}{r_n^2 + \alpha^2} \right\}.$$
Hence, the square of the Hilbert-Schmidt norm 
$$|| (\Delta_0  + s(s-1))^{-1} ||_{HS} ^2 = \sum_{n \geq 0 } \frac{1}{(r_n^2 + \alpha^2)^2}.$$
We divide the expression for $R(s, s')$ by $s-s'$ and let $s' \to s$, thereby obtaining the derivative with respect to $s$.  Thus, we obtain 
$$\lim_{s' \to s} \frac{R(s, s')}{s-s'} =- 2\alpha \sum_{n \geq 0} \frac{1}{(r_n^2 + \alpha^2)^2} = - 2 \alpha || (\Delta_0 + s(s-1))^{-1} || _{HS} ^2.$$
By the Selberg Trace Formula, this is equal to 
$$(g-1) \int_\R \frac{-2\alpha r \tanh(\pi r) }{(r^2 + \alpha^2)^2} dr - \frac{1}{2\alpha^2} \sum_{\gamma} \frac{\log(N(\gamma)) N(\gamma)^{-s} }{1-N(\gamma)^{-s}} - \frac{1}{2\alpha} \sum_{\gamma} \frac{ \log(N(\gamma)) ^2 N(\gamma)^s}{(N(\gamma)^s - 1)^2}.$$

Note that in terms of $\alpha$, 
$$\frac{1}{2\alpha} \frac{d}{ds} = L.$$
Moreover, 
$$\log Z(s) = \sum_{\gamma \in \cL} \log(1-N(\gamma)^{-s}).$$
Hence, 
$$L \log Z(s) = \frac{1}{2\alpha} \sum_{\gamma \in \cL} \frac{\log N(\gamma) N(\gamma)^{-s}}{1-N(\gamma)^{-s}}.$$
Thus, 
$$- 2 \alpha || (\Delta + s(s-1))^{-1} || _{HS} ^2 = (g-1) \int_\R \frac{-2\alpha r \tanh(\pi r) }{(r^2 + \alpha^2)^2} dr - \frac{d}{ds}L(\log Z(s)).$$
Dividing by $-1/2\alpha$ we then have 
$$|| (\Delta + s(s-1))^{-1} || _{HS} ^2 = \frac{-(g-1)}{2\alpha}   \int_\R \frac{-2\alpha r \tanh(\pi r) }{(r^2 + \alpha^2)^2} dr + L^2 (\log Z(s))$$
$$=(g-1) \int_\R \frac{r \tanh(\pi r) }{(r^2 + \alpha^2)^2} dr + L^2 (\log Z(s)).$$

We consider the integral.  Let 
$$f(z) = \frac{z \tanh (\pi z)}{(z^2 + \alpha^2)^2}.$$
If $|z| \to \infty$, $|f(z)| = O(|z|^{-3})$, thus we can compute the integral over $\R$ using a large half disk contour together with the residue theorem.  The estimate shows that the integral on the curved arc of the half disk is vanishing as the radius of the disk tends to infinity.  Hence 
$$\int_R f(z) dz = 2\pi i \sum_{ \Im(z)>0} Res(f; z).$$
We compute the residues.  The hyperbolic tangent has simple poles wherever the hyperbolic cosine vanishes.  For $\Im(z) > 0$, this occurs at 
$$z = i(n+1/2), n \in \Z, n \geq 0.$$
Moreover, there is also a pole of order two at $z = i \alpha$.  The residues at the simple poles are 
$$\lim_{z \to i (n+1/2)} \frac{z-i(n+1/2)}{\cosh(\pi z)} \frac{z \sinh (\pi z)}{z^2 + \alpha^2)^2}.$$
The first part converges to 
$$\frac{1}{\cosh(\pi z)'}, \quad z = i(n+1/2) = \frac{1}{\pi \sinh(\pi i (n+1/2))} = \frac{ (-1)^{n}}{\pi}.$$
The second part converges to 
$$\frac{i (n+1/2) (-1)^{n} }{ (\alpha^2 - (n+1/2)^2)^2}.$$
Thus the residues at these poles are 
$$\frac{i(n+1/2)}{\pi (\alpha^2 - (n+1/2)^2)^2}.$$

The residue at the pole of order 2 is $F'(i \alpha)$, where $F(z) := (z- i\alpha)^2 f(z)$.  Hence, we compute the derivative of 
$$F(z) = \frac{z \tanh (\pi z)}{(z+i\alpha)^2}, \quad F'(z) = \frac{ (z+i\alpha)^2 (\tanh (\pi z) + \pi z (1-\tanh^2 (\pi z))) - 2(z+i\alpha) z \tanh(\pi z)}{(z+i\alpha)^4}.$$
Hence 
$$F'(i\alpha) = \frac{ \tanh(i\pi \alpha) + i \pi \alpha(1-\tanh^2 (i \pi \alpha))}{(2i\alpha)^2} - \frac{2i\alpha \tanh(i \pi \alpha)}{(2i\alpha)^3} = \frac{\pi}{2i\alpha} (1-\tanh^2(i\pi \alpha)).$$
We have by the Residue Theorem, 
$$\int_\R \frac{r \tanh(\pi r)}{(r^2 + \alpha^2)^2} dr = (2\pi i) \left( \frac{\pi}{2i\alpha}(1-\tanh^2(i\pi \alpha)) + \sum_{n \geq 0} \frac{i (n+1/2)}{\pi (\alpha^2 - (n+1/2)^2)^2}\right)$$
$$= \frac{\pi^2}{\alpha} (1+ \tan^2(\pi \alpha)) - 2 \sum_{n \geq 0} \frac{n+1/2}{(\alpha^2 - (n+1/2)^2)^2}.$$
This is due to the fact that 
$$\tanh(i\pi \alpha) = i \tan(\pi \alpha), \quad \alpha \in \R.$$
To achieve a bit more simplification, we use the fact that in general 
$$\frac{1}{(\alpha+\beta)^2 (\alpha - \beta)^2} = \left( \frac{1}{(\alpha+ \beta)^2} - \frac{1}{(\alpha - \beta)^2} \right) \frac{-1}{4\alpha \beta}.$$ 

Then we have for the sum, 
$$\sum_{n \geq 0} \frac{ (n+1/2)}{ (\alpha + (n+1/2))^2 (\alpha - (n+1/2))^2} $$
$$= \sum_{n \geq 0} \frac{-(n+1/2)}{4\alpha (n+1/2)} \left( \frac{1}{(\alpha+ n+1/2)^2} - \frac{1}{(\alpha - (n+1/2))^2} \right) $$
$$= -\frac{1}{\alpha} \sum_{n \geq 0} \left( \frac{1}{ (2(s-1/2) + 2(n+1/2))^2} - \frac{1}{(2(s-1/2) - 2(n+1/2))^2} \right)$$
$$= - \frac{1}{\alpha} \sum_{n \geq 0} \left( \frac{1}{(2s + 2n)^2} - \frac{1}{(2s - 2(n+1))^2} \right) = - \frac{1}{4\alpha} \sum_{n \geq 0} \left( \frac{1}{(n+s)^2} - \frac{1}{(n+1)-s)^2} \right) $$
$$= - \frac{1}{4\alpha s^2} - \frac{1}{4\alpha} \left( \sum_{n \geq 1} \frac{1}{(n+s)^2} + \sum_{m \geq 1} \frac{1}{(m-s)^2} \right)$$
$$= - \frac{1}{4\alpha s^2} + \frac{1}{4\alpha} \sum_{n \geq 1} \frac{-1}{(n+s)^2} + \frac{1}{(n-s)^2},$$
and 
$$\frac{1}{4\alpha} \sum_{n \geq 1} \frac{-1}{(n+s)^2} + \frac{1}{(n-s)^2} = \frac{1}{4\alpha} \sum_{n \geq 1} \frac{4ns}{(n^2 - s^2)^2}.$$
Thus, the sum part simplifies to 
$$- \frac{1}{4\alpha s^2} + \frac{s}{\alpha} \sum_{n \geq 1} \frac{n}{(n^2 - s^2)^2}.$$

Hence, we have computed the Hilbert-Schmidt norm square, 
$$|| (\Delta_0 + s(s-1))^{-1} ||_{HS} ^2 $$
$$= (g-1) \left( \frac{\pi^2}{\alpha}(1+\tan^2(\pi \alpha)) 
-2 \left( - \frac{1}{4\alpha s^2} + \frac{s}{\alpha} \sum_{n \geq 1} \frac{n}{(n^2 - s^2)^2} \right) \right) - L^2 \log Z(s).$$
$$= \frac{(g-1)\pi^2}{\alpha} (1+\tan^2(\pi \alpha)) + \frac{(g-1)}{2
  \alpha s^2} - \frac{2(g-1) s}{\alpha} \sum_{n \geq 1} \frac{ n}{(n^2
  - s^2)^2} - L^2 \log Z(s).
$$
Consequently we find also the variational formulas
for the norm square
$$
\bar\partial_{\mu}\partial_{\mu}
|| (\Delta_0 + s(s-1))^{-1} ||_{HS} ^2 
=
 -\bar\partial_{\mu}\partial_{\mu}
( L^2
 \log Z(s))=
- L^2(
\bar\partial_{\mu}\partial_{\mu}
 \log Z(s)
)
$$
with 
$\bar\partial_{\mu}\partial_{\mu}
 \log Z(s)$ being given in Sections 4 and 5.
\end{proof} 
\end{appendix} 

\providecommand{\bysame}{\leavevmode\hbox to3em{\hrulefill}\thinspace}
\providecommand{\MR}{\relax\ifhmode\unskip\space\fi MR }
\providecommand{\MRhref}[2]{%
  \href{http://www.ams.org/mathscinet-getitem?mr=#1}{#2}
}
\providecommand{\href}[2]{#2}

\bibliographystyle{amsplain}

\end{document}